\newlist{thmlist}{enumerate}{1} 
\setlist[thmlist]{label=(\roman{thmlisti})} 
\renewcommand{\p@thmlisti}{\perh@ps{\thethm}}
\protected\def\perh@ps#1{#1}
\newcommand{\itemref}[1]{%
  \begingroup 
  \let\perh@ps\@gobble\ref{#1}%
  \endgroup
}
\newcommand{\mybar}[1]{\bar{#1}} 
\newcommand{\secref}{\S\,}
\newcommand{\0}{\varnothing}
\newcommand{\acl}{\mathrm{acl}}
\newcommand{\Aut}{\mathrm{Aut}}
\newcommand{\C}{\mathcal{C}}
\newcommand{\dcl}{\mathrm{dcl}}
\newcommand{\dom}{\mathrm{dom}}
\newcommand{\E}{\mathcal{E}}
\newcommand{\eq}{^\mathrm{eq}}
\newcommand{\F}{\mathbb{F}}
\newcommand{\homsub}{\leq_\mathrm{hom}}
\newcommand{\Id}{\mathrm{Id}}
\newcommand{\La}{\mathcal{L}}
\newcommand{\M}{\mathcal{M}}
\newcommand{\N}{\mathcal{N}}
\newcommand{\Na}{\mathbb{N}}
\newcommand{\Npos}{\mathbb{N}^+}
\newcommand{\ptmem}{\pi} 
\newcommand{\pt}{\Phi} 
\newcommand{\Q}{\mathbb{Q}}
\newcommand{\Rpos}{\mathbb{R}^+}
\newcommand{\Rnn}{\mathbb{R}^{\geq 0}}
\newcommand{\rk}{\mathrm{rk}}
\newcommand{\substruc}{\leq}
\newcommand{\Th}{\mathrm{Th}}
\newcommand{\tp}{\mathrm{tp}}
\newcommand{\vphi}{\varphi}
\newcommand{\U}{\mathcal{U}}
\newcommand{\Z}{\mathbb{Z}}
\newbox\gnBoxA
\newdimen\gnCornerHgt
\newdimen\gnArgHgt
\def\Gnum #1{%
\setbox\gnBoxA=\hbox{$#1$}%
\gnArgHgt=\ht\gnBoxA%
\ifnum     \gnArgHgt<\gnCornerHgt \gnArgHgt=0pt%
\else \advance \gnArgHgt by -\gnCornerHgt%
\fi \raise\gnArgHgt\hbox{$\ulcorner$} \box\gnBoxA %
\raise\gnArgHgt\hbox{$\urcorner$}}
\theoremstyle{plain}
\newtheorem*{main}{Main result}
\newtheorem{thm}{Theorem}[subsection]
\newtheorem{lem}[thm]{Lemma}
\newtheorem{propn}[thm]{Proposition}
\newtheorem{cor}[thm]{Corollary}
\newtheorem{fact}[thm]{Fact}
\theoremstyle{definition}
\newtheorem{defn}[thm]{Definition}
\newtheorem{rem}[thm]{Remark}
\newtheorem{exmpl}[thm]{Example}
\newtheorem{nonexmpl}[thm]{Non-Example}
\newtheorem{question}[thm]{Question}
\colorlet{mylinkcolour}{blue!55!black}
\colorlet{mycitecolour}{blue!55!black}
\colorlet{myurlcolour}{blue!55!black}
\xpretocmd{\@maketitle}{\let\@makefntext\BHFN@OldMakefntext}{}{}
\renewcommand\@makefntext[1]{%
  \@ifundefined{@makefnmark}
    {}
    {%
     \renewcommand\@makefnmark{%
       \mbox{%
         \textsuperscript{%
           \normalfont
           \hyperref[\BackrefFootnoteTag]{\@thefnmark}%
         }%
       }\,%
     }%
     \BHFN@OldMakefntext{#1}%
  }%
}
\renewcommand*{\thmhead@plain}[2]{%
  \thmname{#1}\thmnumber{\@ifnotempty{#1}{ }\@upn{#2}}%
}
\renewcommand*{\swappedhead@plain}[2]{%
  \thmnumber{\@upn{#2}}\thmname{\@ifnotempty{#2}{. }#1}%
}
\def\@begintheorem#1#2[#3]{%
  \item[\normalfont 
  \hskip\labelsep
  \the\thm@headfont
  \thm@indent
  \@ifempty{#1}{\let\thmname\@gobble}{\let\thmname\@iden}%
  \@ifempty{#2}{\let\thmnumber\@gobble}{\let\thmnumber\@iden}%
  \thm@swap\swappedhead\thmhead{#1}{#2}]
  \@ifempty{#3}{{\hskip-\labelsep}\let\thmnote\@gobble}{\let\thmnote\@iden}%
  \thmnote{\textmd{\upshape(#3)}}%
  \the\thm@headpunct%
  \@restorelabelsep
  \thmheadnl 
  \ignorespaces}
\Crefname{chapter}{Chapter}{Chapters}
\Crefname{section}{\S\!}{\S\S\!}
\Crefname{subsection}{\S\!}{\S\S\!}
\Crefname{thm}{Theorem}{Theorems}
\Crefname{lem}{Lemma}{Lemmas}
\Crefname{propn}{Proposition}{Propositions}
\Crefname{cor}{Corollary}{Corollaries}
\Crefname{conj}{Conjecture}{Conjectures}
\Crefname{fact}{Fact}{Facts}
\Crefname{defn}{Definition}{Definitions}
\Crefname{rem}{Remark}{Remarks}
\Crefname{exmpl}{Example}{Examples}
\Crefname{nonexmpl}{Non-Example}{Non-Examples}
\Crefname{question}{Question}{Questions}
\crefname{footnote}{footnote}{footnotes}
\title[Multidimensional exact classes]{Multidimensional exact classes, smooth approximation and bounded 4-types}
\author{Daniel Wolf}
\dedicatory{Dedicated to the memories of my son Arthur and my mother Valerie}
\thanks{The author (born Daniel Wood) was funded by the Leeds School of Mathematics through a Graduate Teaching Assistantship. The present work is taken largely from his PhD thesis \cite{wolfphd}.}
\keywords{Asymptotic class, smooth approximation, Lie coordinatisation}
\subjclass{03C13, 03C20}
\address{\emph{F\MakeLowercase{ormerly of the}}\\School of Mathematics\\University of Leeds\\Leeds LS2 9JT\\United Kingdom}
\email{\href{mailto:dwolfeu@gmail.com}{dwolfeu@gmail.com}}
\begin{document}


\vspace*{-2\baselineskip}

\begin{center}
{\large To appear in the \href{http://aslonline.org/journals/the-journal-of-symbolic-logic/}{\emph{Journal of Symbolic Logic}}}

\vspace{.25\baselineskip}

{\footnotesize Submitted 2018-05-10 {\textperiodcentered} Accepted 2019-07-28 {\textperiodcentered} Resubmitted with revisions 2020-05-19}
\end{center}

\vspace{2\baselineskip}

\begin{abstract}
\hypertarget{abstract} In connection with the work of Anscombe, Macpherson, Steinhorn and the present author in \cite{amsw} we investigate the notion of a multidimensional exact class ($R$-mec), a special kind of multidimensional asymptotic class ($R$-mac) with measuring functions that yield the exact sizes of definable sets, not just approximations. We use results about smooth approximation \cite{klm} and Lie coordinatisation \cite{cherhrush} to prove the following result (\Cref{dugaldsconjecturefull}), as conjectured by Macpherson: For any countable language $\La$ and any positive integer $d$ the class $\C(\La,d)$ of all finite $\La$-structures with at most $d$ 4-types is a polynomial exact class in $\La$, where a polynomial exact class is a multidimensional exact class with polynomial measuring functions.
\end{abstract}

{\let\newpage\relax\maketitle}


\vspace*{-1.5\baselineskip}

\section{Introduction}
\label{section:intro}

The model-theoretic notion of an asymptotic class was introduced by Macpherson and Steinhorn in \cite{macstein1} as a generalisation of the result in \cite{cdm} of Chatzidakis, van den Dries and Macintyre regarding the size of definable sets in finite fields. This notion has been further generalised by Anscombe, Macpherson, Steinhorn and the present author in \cite{amsw} and \cite{wolfphd} to that of a multidimensional asymptotic class, also known as an $R$-mac. Details of the historical development of these notions can be found in \secref{1.1} of \cite{wolfphd}.

In the present work we focus on multidimensional exact classes, also known as $R$-mecs, which are a special kind of multidimensional asymptotic class where the measuring functions yield the exact sizes of definable sets, not just approximations. We show that multidimensional exact classes and smooth approximation (in the sense of \cite{klm}) are intimately related by proving that every smoothly approximable structure gives rise to a muldimensional exact class (\Cref{sapproxmecshort}). Using the framework of Lie coordinatisation, as developed by Cherlin and Hrushovski in \cite{cherhrush}, we then build on \Cref{sapproxmecshort} to prove the main result of this paper, as conjectured by Macpherson:

\begin{main}[\Cref{dugaldsconjecturefull}] For any countable language $\La$ and any positive integer $d$ the class $\C(\La,d)$ of all finite $\La$-structures with at most $d$ 4-types is a polynomial exact class in $\La$, where a polynomial exact class is a multidimensional exact class with polynomial measuring functions.
\end{main}

We outline the structure of the present work. In \Cref{section:mecs} we state the definition of an $R$-mec (and an $R$-mac), prove some technical lemmas and provide some examples and non-examples. \Cref{section:smoothapprox} is about smooth approximation and is where we prove the aforementioned \Cref{sapproxmecshort}. In \Cref{section:liecoord} we move on to Lie coordinatisation, which we use to prove the main result \Cref{dugaldsconjecturefull}.

We make extensive use of the Ryll-Nardzewski Theorem throughout this paper. This is well covered in the literature, for example \secref{1.3} of \cite{evans}, Theorem 7.3.1 in \cite{hodges}, Theorem 5.1 in \cite{kayemac} and Theorem 4.3.2 in \cite{tentziegler}. We refer to \cite{marker} and \cite{tentziegler} for general model-theoretic notation and terminology.


\section{Multidimensional exact classes}
\label{section:mecs}

We introduce the central definition of this paper, state and prove some handy lemmas in \Cref{subsec:usefullemmas} and then provide some (non-)examples in \Cref{subsec:examples}.

\subsection{Basic definitions}
\label{subsec:basicdefns}

Let $\La$ be a finitary, first-order language and let $\C$ be a class of finite $\La$-structures. For $m\in\Npos$ define
\[
\C(m):=\{(\M,\mybar{a}):\M\in\C,\mybar{a}\in M^m\}.
\]
We use $M$ (roman typeface) to denote the underlying set of the structure $\M$\linebreak (calligraphic typeface), although we do not maintain this distinction throughout. The elements of $\C(m)$ are sometimes referred to as \emph{pointed structures}. We define $\C(0):=\C$.

\begin{defn}[Definable partition]
Let $\pt$ be a partition of $\C(m)$. An element $\ptmem\in\pt$ is \emph{definable} if there exists a parameter-free $\La$-formula $\psi(\mybar{y})$ with $l(\mybar{y})=m$ such that for every $(\M,\mybar{a})\in\C(m)$ we have $(\M,\mybar{a})\in\ptmem$ if and only if $\M\models\psi(\mybar{a})$. The partition $\pt$ is \emph{definable} if $\ptmem$ is definable for every $\ptmem\in\pt$.
\end{defn}

The following definition is due to Anscombe, Macpherson, Steinhorn and the present author.

\begin{defn}[$R$-mec]\label{defnmec2}
Let $\C$ be a class of finite $\La$-structures and let $R$ be a set of functions from $\C$ to $\Na$. Then $\C$ is a \emph{multidimensional exact class for $R$ in $\La$}, or \emph{$R$-mec in $\La$} for short, if for every parameter-free $\La$-formula $\vphi(\mybar{x},\mybar{y})$, where $n:=l(\mybar{x}) \geq 1$ and $m:=l(\mybar{y})$, there exists a finite definable partition $\pt$ of $\C(m)$ such that for each $\ptmem\in\pt$ there exists $h_\ptmem\in R$ such that
\begin{equation}\label{sizeclause}
|\vphi(\M^n,\mybar{a})|=h_\ptmem(\M)
\end{equation}
for all $(\M,\mybar{a})\in\ptmem$.
\end{defn}

Before we provide the first example of an $R$-mec, we make some initial remarks and observations:

\begin{rem}~\label{macremarks}
\begin{thmlist}

\item\label{measuringfunctions} We call the functions $h_\ptmem$ the \emph{measuring functions} and the $\La$-formulas that define the partition $\pt$ the \emph{defining $\La$-formulas}. We often refer to multidimensional exact classes simply as \emph{exact classes}.

\item\label{differentvariables} In the $\La$-formula $\vphi(\mybar{x},\mybar{y})$ it is important to maintain a distinction between the free variables $\mybar{x}$ and the free variables $\mybar{y}$. (Although we use the plural \emph{variables}, either of $\mybar{x}$ and $\mybar{y}$ could denote a single variable.) The free variables $\mybar{x}$, which we call \emph{object variables}, are slots for solutions in each $\M\in\C$. The free variables $\mybar{y}$, which we call \emph{parameter variables}, are slots for parameters from each $\M\in\C$. To aid clarity we sometimes demarcate the two kinds of free variables with a semicolon, writing $\vphi(\mybar{x};\mybar{y})$. The \nameref{projlemmec} (\Cref{projlemmec}) shows that it suffices to consider formulas with only a single object variable.

\item We consider two edge cases where the definition holds trivially:

\begin{compactitem}

\item Suppose that $\vphi(\bar{x},\bar{y})$ is inconsistent, i.e.\ that $\vphi(\M^n,\bar{a}) = \0$ for every $(\M,\bar{a}) \in \C(m)$. Then the required definable partition of $\C(m)$ is the trivial partition $\{\C(m)\}$ and the measuring function is $\M \mapsto 0$.

\item Suppose that $m=0$, i.e.\ that the only free variables in $\vphi(\bar{x},\bar{y})$ are $\bar{x}$. Then $\C(m) = \C(0) = \C$ and $\vphi(\bar{x},\bar{y})$ can be written as $\vphi(\bar{x})$. The required definable partition of $\C$ is the trivial partition $\{\C \}$ and the measuring function is $\M \mapsto |\vphi(\M^n)|$.

\end{compactitem}
We will henceforth assume formulas to be consistent and $m$ to be positive.

\item $R$ must be closed under pointwise addition and multiplication: If $A$ and $B$ are definable sets, then their disjoint union $A\sqcup B$ is definable and has size $|A|+|B|$ and their cartesian product $A\times B$ is definable and has size $|A|\cdot |B|$.
So $R$ is generated under addition and multiplication by a subset of basic functions.

\item\label{defnweakmec} If we drop the requirement that the partition $\pt$ be definable, then we call $\C$ a \emph{weak $R$-mec}. We call \eqref{sizeclause} the \emph{size clause} and the requirement that the partition be definable the \emph{definability clause}. So a weak $R$-mec need satisfy only the size clause. We sometimes use the term {\it full $R$-mec} to emphasise that both the size and definability clauses hold and the term \emph{strictly weak $R$-mec} to emphasise that only the size clause holds.

\item\label{smallermac} $R$-mecs are closed under taking subclasses of $\C$ and supersets of $R$: If $\C$ is an $R$-mec in $\La$, then any subclass of $\C$ is also an $R'$-mec in $\La$ for any superset $R'\supseteq R$. Equivalently: If $\C$ is not an $R$-mec in $\La$, then no superclass of $\C$ is an $R'$-mec in $\La$ for any subset $R'\subseteq R$.

Weak $R$-mecs are closed under taking reducts: Let $\C$ be a weak $R$-mec in $\La$ and consider some $\La'\subseteq\La$. For $\M\in\C$, let $\M'$ denote the reduct of $\M$ to $\La'$. Then $\{\M':\M\in\C\}$ is a weak $R$-mec in $\La'$. Equivalently: Suppose that $\C$ is not a weak $R$-mec in $\La$ and consider some $\La'\supseteq\La$. For $\M\in\C$, let $\M'$ be an expansion of $\M$ to $\La'$. Then $\{\M':\M\in\C\}$ is not a weak $R$-mec in $\La'$. Note that we cannot remove the prefix `weak' here, since taking reducts may affect the definability clause.

\end{thmlist}
\end{rem}

We now provide our first class of examples. More examples are given in \Cref{subsec:examples}.

\begin{defn}\label{qedefn}
Let $\C$ be a class of finite $\La$-structures. We say that $\C$ has \emph{quantifier elimination in $\La$} if $\Th_\La(\M)$ has quantifier elimination for every $\M\in\C$, where $\Th_\La(\M)$ denotes the $\La$-theory of $\M$.
\end{defn}

\begin{propn}\label{qeexample}
Let $\La$ be a finite relational language and let $\C$ be a class of finite $\La$-structures. If $\C$ has quantifier elimination in $\La$, then there exists $R$ such that $\C$ is an $R$-mec in $\La$.
\end{propn}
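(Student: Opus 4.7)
The plan is to partition $\C(m)$ by the quantifier-free type of $\bar a$ in $\M$. Finite relationality gives only finitely many such types, and QE together with the ultrahomogeneity of finite structures whose theory has QE will ensure that $|\vphi(\M^n,\bar a)|$ depends only on $\M$ and the quantifier-free type of $\bar a$, not on the particular realization.

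Fix a parameter-free formula $\vphi(\bar x;\bar y)$ with $l(\bar x)=n$ and $l(\bar y)=m$. Since $\La$ is finite relational and $\bar y$ is a finite tuple, there are only finitely many atomic $\La$-formulas in $\bar y$, hence only finitely many complete quantifier-free $\La$-types $q_1,\ldots,q_M$ in $\bar y$, each axiomatized by a single quantifier-free $\La$-formula $\tau_j(\bar y)$. I then partition $\C(m)$ into the finitely many parts
\[
\ptmem_j:=\{(\M,\bar a)\in\C(m):\M\models\tau_j(\bar a)\},
\]
which are already definable by the $\tau_j(\bar y)$ themselves.

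For each $j$, the measuring function is $h_j\colon\C\to\Na$ defined by $h_j(\M):=|\vphi(\M^n,\bar a)|$ for any $\bar a\in M^m$ realizing $q_j$ in $\M$ (and $h_j(\M):=0$ if $q_j$ is not realized in $\M$); I take $R$ to be the collection of all such functions as $\vphi$ and $j$ vary. The main point to verify --- essentially the only content --- is that $h_j$ is well-defined. If $\bar a,\bar a'\in M^m$ both realize $q_j$ in $\M$, then by QE in $\Th(\M)$ they share the same complete type over $\varnothing$; since $\M$ is finite, a routine back-and-forth produces some $\sigma\in\Aut(\M)$ with $\sigma(\bar a)=\bar a'$, and restricting $\sigma$ gives a bijection $\vphi(\M^n,\bar a)\to\vphi(\M^n,\bar a')$, so the two cardinalities agree. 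This establishes the size clause and completes the proof.
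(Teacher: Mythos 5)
Your proof is correct and follows essentially the same route as the paper's: partition $\C(m)$ by complete quantifier-free $m$-types (which the paper calls "maximally consistent conjunctions of literals"), note that QE in a finite relational language makes these finitely many and definable, and use the standard fact that in a finite structure two tuples with the same complete type lie in the same $\Aut(\M)$-orbit to show the measuring function $h_j$ is well-defined. The only cosmetic difference is that you invoke a back-and-forth to produce the automorphism and explicitly set $h_j(\M)=0$ when $q_j$ is unrealised, both of which the paper leaves implicit.
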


\begin{proof}
Consider an $\La$-formula $\vphi(\bar{x},\bar{y})$, where $n:=l(\mybar{x}) \geq 1$ and $m:=l(\mybar{y})$. Let $A$ be the set of all $\La$-literals with free variables among $\bar{y}$. $A$ is finite because $\La$ is finite and relational. Let $B$ be the set of all maximally consistent conjunctions of literals from $A$. $B$ is finite because $A$ is finite. We can thus enumerate the elements of $B$ as $\psi_1(\bar{y}), \ldots, \psi_d(\bar{y})$ for some $d\in\Na^+$. Now consider some $\M\in\C$. Since $\C$ has quantifier elimination, $\Th_\La(\M)$ has quantifier elimination. Therefore each complete type in the free variables $\bar{y}$ is isolated by one of the $\psi_i(\bar{y})$. We define
\[
\pi_i := \{ (\M,\bar{a}) \in \C(m) : \M \models \psi_i(\bar{a})\}.
\]
Then $\{\pi_1, \ldots, \pi_d\}$ is a definable partition of $\C(m)$. Moreover, for each $i$, if $(\M,\bar{a}), (\M,\bar{b}) \in \pi_i$, then $\tp^\M(\bar{a}) = \tp^\M(\bar{b})$ and thus, since $\M$ is finite, there is an automophism $\sigma\colon\M\to\M$ such that $\sigma(\bar{a}) = \bar{b}$, which implies that
\[
|\vphi(\M^n,\bar{a})| = |\vphi(\M^n,\bar{b})|.
\]
So we may define $h_i(\M) := |\vphi(\M^n,\bar{a})|$, where $(\M,\bar{a}) \in \pi_i$. Then $h_i$ is the measuring function associated with $\pi_i$.
\end{proof}

\begin{cor}\label{simplestexample}
The class of finite sets is a multidimensional exact class in the language of pure equality.
\end{cor}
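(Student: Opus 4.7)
The plan is to derive the corollary as an immediate consequence of \Cref{qeexample}. Let $\La$ be the language of pure equality and let $\C$ be the class of finite $\La$-structures. Since $\La$ contains no non-logical symbols, it is trivially finite and relational, so the hypotheses of \Cref{qeexample} on the language are satisfied.

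The only remaining task is to check that $\C$ has quantifier elimination in $\La$ in the sense of \Cref{qedefn}, i.e.\ that for each $\M \in \C$ the theory $\Th_\La(\M)$ admits QE. Fix $\M \in \C$ with $|M| = n$. Then $\Th_\La(\M)$ contains the sentence $\sigma_n$ asserting that there exist exactly $n$ elements. In the language of pure equality, the complete $\La$-type of a tuple $\bar{y}$ over $\emptyset$ is determined by its equality pattern together with the cardinality of the ambient universe; since $\sigma_n$ fixes the latter, types over $\emptyset$ in models of $\Th_\La(\M)$ are determined by the equality patterns of the parameters alone. A routine induction on formula complexity (or equivalently, a direct back-and-forth between any two models of $\Th_\La(\M)$, both of which have size $n$) then shows that every $\La$-formula is equivalent modulo $\Th_\La(\M)$ to a quantifier-free $\La$-formula.

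With QE established, \Cref{qeexample} produces a set $R$ of functions $\C \to \Na$ witnessing that $\C$ is an $R$-mec in $\La$, as required. There is no substantive obstacle here: the only content beyond invoking the preceding proposition is the standard observation that the theory of a finite pure set has quantifier elimination.
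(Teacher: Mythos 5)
Your proof is correct and follows the same route as the paper: both reduce to \Cref{qeexample} by observing that the language of pure equality is finite and relational and that the theory of a finite pure set has quantifier elimination. The paper simply asserts QE without the brief back-and-forth justification you supply, but the argument is the same.
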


\begin{proof}
Let $\C$ denote the class of finite sets and $\La_=$ the language of pure equality. Since $\La_=$ is finite and relational and $\C$ has quantifier elimination in $\La_=$, we can apply \Cref{qeexample}.
\end{proof}

\begin{rem}
\Cref{simplestexample} underpins all other examples of multidimensional exact classes, since every language contains the language of pure equality as a sublanguage.
\end{rem}

\subsection{Asymptotic classes}
\label{subsec:ACs}

We provide the definitions of $N$-dimensional and multidimensional asymptotic classes, which we have already made reference to. The former is due to Macpherson and Steinhorn \cite{macstein1} and Elwes \cite{elwes}. The latter is due to Anscombe, Macpherson, Steinhorn and the present author \cite{amsw}.

\begin{defn}[$N$-dimensional asymptotic class]\label{NdimAC}
Let $\C$ be a class of finite $\La$-structures and let $N\in\Npos$. Then $\C$ is an {\it $N$-dimensional asymptotic class} if for every parameter-free $\La$-formula $\vphi(\mybar{x},\mybar{y})$, where $n:=l(\mybar{x}) \geq 1$ and $m:=l(\mybar{y})$, there exists a finite definable partition $\Phi$ of $\C(m)$ such that for each $\pi\in\Phi$ there exists $(d, \mu) \in (\{0,\ldots,Nn\}\times\Rpos)\cup\{(0,0)\}$ such that
\[
\left||\vphi(\M^n,\mybar{a})|\vphantom{\sum}-\mu|M|^{\nicefrac{d}{N}}\right|=o\left(|M|^{\nicefrac{d}{N}}\right)
\]
for all $(\M,\mybar{a})\in\Phi_{(d,\mu)}$ as $|M|\rightarrow\infty$, where the meaning of the little-o notation is as follows: For every $\varepsilon>0$ there exists $Q\in\Na$ such that for all $(\M,\mybar{a})\in\pi$, if $|M|>Q$, then
\[
\left||\vphi(\M^n,\mybar{a})|\vphantom{\sum}-\mu|M|^{\nicefrac{d}{N}}\right|\leq\varepsilon |M|^{\nicefrac{d}{N}}.
\]
We call $(d, \mu)$ a \emph{dimension--measure pair}.
\end{defn}

\begin{defn}[$R$-mac]\label{defnmac}
Let $\C$ be a class of finite $\La$-structures and let $R$ be a set of functions from $\C$ to $\Rnn$. Then $\C$ is a \emph{multidimensional asymptotic class for $R$ in $\La$}, or \emph{$R$-mac in $\La$} for short, if for every parameter-free $\La$-formula $\vphi(\mybar{x},\mybar{y})$, where $n:=l(\mybar{x}) \geq 1$ and $m:=l(\mybar{y})$, there exists a finite definable partition $\Phi$ of $\C(m)$ such that for each $\pi\in\Phi$ there exists $h_\pi\in R$ such \hypertarget{little-o-marker}that
\[
\left||\vphi(\M^n,\mybar{a})|\vphantom{\sum}-h_\pi(\M)\right|=o(h_\pi(\M))
\]
for all $(\M,\mybar{a})\in\pi$ as $|M|\rightarrow\infty$, where the meaning of the little-o notation is as follows: For every $\varepsilon>0$ there exists $Q\in\Na$ such that for all
$(\M,\mybar{a})\in\pi$, if $|M|>Q$, then
\[
\left||\vphi(\M^n,\mybar{a})|\vphantom{\sum}-h_\pi(\M)\right|\leq\varepsilon h_\pi(\M).
\]
\end{defn}

\begin{rem}~
\begin{thmlist}
\item The only difference bewteen an $N$-dimensional asymptotic class and an $R$-mac is the specification of the measuring functions, those of the former being restricted to the form $\mu|M|^{\nicefrac{d}{N}}$ while those of latter having no restriction on form.
\item In \Cref{defnmec2} the codomain of the functions in $R$ is $\Na$, while in \Cref{defnmac} it is $\Rnn$. The reason for this difference is the change from exact to approximate measuring functions.
\end{thmlist}
\end{rem}


\subsection{Useful lemmas}
\label{subsec:usefullemmas}

We state and prove a number of lemmas that we will use later on. We start with the Projection Lemma, which we already used in the proof of \Cref{simplestexample}.

\begin{lem}[Projection Lemma]\label{projlemmec}
Let $\C$ be a class of $\La$-structures. Suppose that the definition of an $R$-mec (\Cref{defnmec2}) holds for $\C$ and for all $\La$-formulas $\vphi(x,\mybar{y})$ with a single object variable $x$ (as opposed to a tuple $\mybar{x}$). Then $\C$ is an $R'$-mec in $\La$, where $R'$ is generated under addition and multiplication by the functions in $R$.
\end{lem}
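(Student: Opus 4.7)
The plan is to prove this by induction on $n = l(\bar{x})$, the length of the tuple of object variables. The base case $n=1$ is given by hypothesis (with $R \subseteq R'$). For the inductive step, suppose the statement holds for all $\La$-formulas with at most $n-1$ object variables, and consider $\vphi(\bar{x}, \bar{y}) = \vphi(\bar{x}', x_n, \bar{y})$, where $\bar{x}' = (x_1, \ldots, x_{n-1})$.

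I would first regroup the free variables: treat $x_n$ as the single object variable and $(\bar{x}', \bar{y})$ as the parameter variables, so that $\vphi$ becomes a formula of the type covered by the single-object-variable hypothesis, with parameter tuple of length $n-1+m$. Applying this hypothesis yields a finite definable partition $\{\pi_1,\ldots,\pi_k\}$ of $\C(n-1+m)$, defined by $\La$-formulas $\psi_1(\bar{x}',\bar{y}),\ldots,\psi_k(\bar{x}',\bar{y})$, together with measuring functions $h_1,\ldots,h_k \in R$ such that
\[
|\vphi(\M,\bar{b}',\bar{a})| = h_i(\M) \quad\text{whenever } (\M,\bar{b}',\bar{a}) \in \pi_i.
\]
Summing over $\bar{b}' \in M^{n-1}$ gives the key identity
\[
|\vphi(\M^n,\bar{a})| = \sum_{i=1}^{k} h_i(\M) \cdot |\psi_i(\M^{n-1},\bar{a})|.
\]

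Next, I would apply the inductive hypothesis to each $\psi_i(\bar{x}',\bar{y})$, which has only $n-1$ object variables. This produces, for each $i$, a finite definable partition $\pt^{(i)}$ of $\C(m)$ with defining formulas $\chi^{(i)}_1(\bar{y}),\ldots,\chi^{(i)}_{k_i}(\bar{y})$ and measuring functions $g^{(i)}_1,\ldots,g^{(i)}_{k_i} \in R'$ such that $|\psi_i(\M^{n-1},\bar{a})| = g^{(i)}_j(\M)$ on the $j$-th piece. Take the common refinement $\pt$ of the partitions $\pt^{(1)},\ldots,\pt^{(k)}$: its pieces are defined by Boolean combinations of the $\chi^{(i)}_j(\bar{y})$, so $\pt$ is again a finite definable partition of $\C(m)$. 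On each piece $\ptmem$ of $\pt$, every $|\psi_i(\M^{n-1},\bar{a})|$ is given by a single function $g^{(i)}_{j(i,\ptmem)} \in R'$, so by the identity above,
\[
|\vphi(\M^n,\bar{a})| = \sum_{i=1}^{k} h_i(\M)\cdot g^{(i)}_{j(i,\ptmem)}(\M) \quad\text{for all } (\M,\bar{a}) \in \ptmem,
\]
and this function lies in $R'$ by closure under addition and multiplication. This completes the inductive step.

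There is no serious obstacle; the main thing to get right is the bookkeeping of which variables play the roles of object and parameter variables at each stage, and the observation that the common refinement of finitely many definable partitions remains definable (being cut out by finitely many Boolean combinations of the defining formulas). One should also note that the single-variable hypothesis is used only once per inductive step (to peel off the last variable $x_n$), while the inductive hypothesis already supplies measuring functions in $R'$, which is why $R$ has to be enlarged to its closure $R'$ under $+$ and $\cdot$ to express $\sum_i h_i \cdot g^{(i)}_{j(i,\ptmem)}$.
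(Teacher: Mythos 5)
Your proof is correct and follows essentially the same inductive fibering argument as the paper, with the order of variable-peeling reversed: you remove $x_n$ first by the single-object-variable hypothesis and then apply the induction hypothesis to the resulting defining formulas $\psi_i(\bar{x}',\bar{y})$, whereas the paper removes $(x_1,\ldots,x_{n-1})$ first by the induction hypothesis and then applies the base case to the resulting single-variable defining formulas $\gamma_i(x_n,\bar{y})$. The common refinement of partitions, the sum--product size identity, and the appeal to closure of $R'$ under $+$ and $\cdot$ are identical in both.
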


A proof of the equivalent result for $R$-macs is given in \secref{2.4} of \cite{amsw}. It is adapted from the proof of Theorem 2.1 in \cite{macstein1}. Our proof of \Cref{projlemmec} is a simplified version of the proof in \cite{amsw}.

\begin{proof}[Proof of \Cref{projlemmec}]
Consider an arbitrary $\La$-formula $\vphi(\mybar{x},\mybar{y})$, where $n:=l(\mybar{x}) \geq 1$ and $m:=l(\mybar{y})$. We need to prove that it satisfies both the size and definability clauses. We do this by induction on the length of $\mybar{x}$. The base case of the induction is the hypothesis of the lemma.

Let $\mybar{x}=(x_1,\ldots, x_n)$. By the induction hypothesis we may assume that the size and definability clauses are satisfied by $\vphi(x_1,\ldots,x_{n-1};x_n,\mybar{y})$, where the semicolon is used to indicate the division between the object variables and the parameter variables (see \Cref{differentvariables}). So we have a finite partition $\Gamma$ of $\C(1+m)=\{(\M,a,\mybar{b}):\M\in\C, (a,\mybar{b})\in M^{1+m}\}$ with measuring functions $\{f_i:i\in\Gamma\}\subseteq R$ and defining $\La$-formulas $\{\gamma_i(x_n,\mybar{y}):i\in\Gamma\}$.

Consider each $\gamma_i(x_n,\mybar{y})$. By the base case of the induction, each $\gamma_i(x_n,\mybar{y})$ satisfies the size and definability clauses, so for each $i\in\Gamma$ we have a finite partition $\pt_i:=\{\ptmem_{i1},\ldots,\ptmem_{i{r_i}}\} $ of $\C(m)=\{(\M,\mybar{b}):\M\in\C,\mybar{b}\in M^m\}$ with measuring functions $\{g_{ij}:1\leq j\leq r_i\}\subseteq R$ and defining $\La$-formulas $\{\psi_{ij}(\mybar{y}):1\leq j\leq r_i\}$. We thus have $k:=|\Gamma|$ finite partitions of $\C(m)$. We use them to construct a single finite partition $\pt$ of $\C(m)$. Define
\[
\ptmem_{(j_1,\ldots,j_k)}:=\bigcap_{i\in\Gamma}\ptmem_{ij_{i}}\text{\ and\ }J:=\{(j_1,\ldots,j_k):1\leq j_i\leq r_i,1\leq i\leq k\}.
\]
Then $\pt:=\{\ptmem_{(j_1,\ldots,j_k)}:(j_1,\ldots,j_k)\in J\}$ forms a finite partition of $\C(m)$. We now need to show that this partition works.

We first consider the size clause. For each $\ptmem_{(j_1,\ldots,j_k)}$ we need to find a function $h_{(j_1,\ldots,j_k)}\in R$ such that
\begin{equation}\label{projlemeqn1}
h_{(j_1,\ldots,j_k)}(\M)=|\pt(\M^n,\mybar{b})|
\end{equation}
for all $(\M,\mybar{b})\in\ptmem_{(j_1,\ldots,j_k)}$. So fix some arbitrary $(j_1,\ldots,j_k)$ and consider an arbitrary pair $(\M,\mybar{b})\in\ptmem_{(j_1,\ldots,j_k)}$. (If $\ptmem_{(j_1,\ldots,j_k)}=\0$, then any function $h\in R$ would be vacuously suitable, so we can ignore this case.) Let $\chi_i(x_1,\ldots, x_n,\mybar{y})$ denote the $\La$-formula
\[
\vphi(x_1,\ldots, x_n,\mybar{y})\wedge\gamma_i(x_n,\mybar{y}).
\]
Then, since the $\La$-formulas $\gamma_i(x_n,\mybar{a})$ define the partition $\Gamma$, $\vphi(\M^n,\mybar{b})$ is partitioned by the $\chi_i(\M^n,\mybar{b})$, i.e.
\begin{equation}\label{projlemeqn2}
\vphi(\M^n,\mybar{b})=\bigcup_{i\in\Gamma}\chi_i(\M^n,\mybar{b}),
\end{equation}
where the union is disjoint. Now, for each $i\in\Gamma$ we have
\[
\left|\chi_i(\M^n,\mybar{b})\right| = \sum_{a\in\gamma_i(\M,\mybar{b})}\left|\vphi(\M^{n-1},a,\mybar{b})\right|
\]
because $\chi_i(\M^n,\mybar{b})$ fibres over $\gamma_i(\M,\mybar{b})$. Thus
\begin{equation}\label{projlemeqn3}
\left|\chi_i(\M^n,\mybar{b})\right| = f_i(\M)\cdot\left|\gamma_i(\M,\mybar{b})\right|,
\end{equation}
since $\left|\vphi(\M^{n-1},a,\mybar{b})\right|=f_i(\M)$ if $\M\models\gamma_i(a,\mybar{b})$. But $(\M,\mybar{b})\in\ptmem_{(j_1,\ldots,j_k)}\subseteq\ptmem_{i{j_i}}$ and so $\left|\gamma_i(\M,\mybar{b})\right|= g_{ij_i}(\M)$, which gives
\[
\left|\chi_i(\M^n,\mybar{b})\right| = f_i(\M)\cdot g_{ij_i}(\M)
\]
when put into \eqref{projlemeqn3}. Combining this with \eqref{projlemeqn2} yields
\[
\left|\vphi(\M^n,\mybar{b})\right| = \sum_{i\in\Gamma} f_i(\M)\cdot g_{ij_i}(\M).
\]
So define
\[
h_{(j_1,\ldots,j_k)}(\M):= \sum_{i=1}^k f_i(\M)\cdot g_{ij_i}(\M)
\]
for all $\M\in\C$ and \eqref{projlemeqn1} is satisfied as required.

We now come to the definability clause. Let $\psi_{(j_1,\ldots,j_k)}(\mybar{y})$ denote the formula
\[
\bigwedge_{i=1}^k \psi_{i{j_i}}(\mybar{y}).
\]
Then $(\M,\mybar{b})\in\ptmem_{(j_1,\ldots,j_k)}$ if and only if $\M\models\psi_{(j_1,\ldots,j_k)}(\mybar{b})$. So the definability clause is also satisfied and so we are done.
\end{proof}

The following lemma shows that $R$-mecs are closed under adding constant symbols:

\begin{lem}\label{adding_constants}
Suppose that $\C$ is an $R$-mec in $\La$. Let $\La'$ be an extension of $\La$ by constant symbols and for $\M\in\C$ let $\M'$ be an $\La'$-expansion of $\M$. Then $\C':=\{\M':\M\in\C\}$ is an $R$-mec in $\La'$.
\end{lem}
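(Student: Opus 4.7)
The plan is to absorb the finitely many new constant symbols appearing in a given $\La'$-formula into the parameter variables, so that the $R$-mec hypothesis for $\C$ in $\La$ can be invoked directly. By the \nameref{projlemmec} (\Cref{projlemmec}), together with the fact noted in \Cref{macremarks} that $R$ is already closed under pointwise addition and multiplication, it suffices to verify \Cref{defnmec2} for $\La'$-formulas with a single object variable. Fix such a formula $\vphi'(x,\bar{y})$ with $l(\bar{y})=m$. Let $c_1,\ldots,c_s$ be the constant symbols in $\La'\setminus\La$ occurring in $\vphi'$, introduce fresh variables $\bar{z}=(z_1,\ldots,z_s)$, and let $\vphi(x,\bar{y},\bar{z})$ be the $\La$-formula obtained from $\vphi'$ by replacing each $c_i$ with $z_i$. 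Writing $\bar{c}^{\M'}:=(c_1^{\M'},\ldots,c_s^{\M'})$, we have
\[
\vphi'(\M'^n,\bar{a})=\vphi(\M^n,\bar{a},\bar{c}^{\M'})
\]
for every $\M\in\C$ and every $\bar{a}\in M^m$.

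Next, I would apply the $R$-mec property of $\C$ in $\La$ to the $\La$-formula $\vphi(x;\bar{y},\bar{z})$, treating $\bar{y}$ and $\bar{z}$ jointly as parameter variables. This furnishes a finite definable partition $\pt$ of $\C(m+s)$ with defining $\La$-formulas $\{\psi_\ptmem(\bar{y},\bar{z}):\ptmem\in\pt\}$ and measuring functions $\{h_\ptmem:\ptmem\in\pt\}\subseteq R$. For each $\ptmem\in\pt$, let $\psi'_\ptmem(\bar{y})$ denote the $\La'$-formula obtained by substituting $c_i$ for $z_i$ in $\psi_\ptmem(\bar{y},\bar{z})$, and set
\[
\ptmem':=\{(\M',\bar{a})\in\C'(m):\M'\models\psi'_\ptmem(\bar{a})\},\qquad\pt':=\{\ptmem':\ptmem\in\pt\}.
\]
Because the $\psi_\ptmem$ partition $\C(m+s)$, the $\psi'_\ptmem$ partition $\C'(m)$, so $\pt'$ is a finite definable partition of $\C'(m)$ in $\La'$. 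Identifying each $h_\ptmem\in R$ with its counterpart on $\C'$ under the bijection $\M\leftrightarrow\M'$ induced by the reduct, the displayed equation above gives $|\vphi'(\M'^n,\bar{a})|=h_\ptmem(\M)$ for every $(\M',\bar{a})\in\ptmem'$, discharging the size clause.

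There is no substantive obstacle here; the argument is essentially bookkeeping. The only point worth flagging is that $\bar{z}$ must be treated as \emph{parameter} variables (rather than object variables) when invoking the $R$-mec hypothesis, since only then does instantiating $z_i$ to $c_i$ in the $\La$-formulas $\psi_\ptmem$ yield an honest definable partition in $\La'$ without disturbing the size clause; instantiating them as object variables would instead change the cardinalities being measured.
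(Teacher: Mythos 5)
Your proof is correct and is exactly the argument the paper intends: the paper's own proof is the single sentence ``This follows straightforwardly from the definition of an $R$-mec,'' and you have supplied the omitted bookkeeping (replace each new constant $c_i$ by a fresh parameter variable $z_i$, apply the $R$-mec hypothesis to $\vphi(\bar{x};\bar{y},\bar{z})$, substitute back). One small simplification worth noting: the opening detour through the \nameref{projlemmec} is unnecessary, since the substitution argument works verbatim for any number of object variables $\bar{x}$; dropping it also avoids having to appeal to closure of $R$ under sums and products to identify $R'$ with $R$.
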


\begin{proof}
This follows straightforwardly from the definition of an $R$-mec.
\end{proof}

The following lemma shows that if we want to prove that a class $\C$ is an $R$-mec in $\La$, then it suffices to show that the definition eventually holds for each $\La$-formula:

\begin{lem}\label{finiteexceptions}
Suppose that the definition of a multidimensional exact class (\Cref{defnmec2}) holds for $\vphi(\mybar{x},\mybar{y})$, $R$ and the subclass
\[
\C(m)_{>Q}:=\{(\M,\mybar{a}):(\M,\mybar{a})\in\C(m)\text{ and }|M|>Q\}
\]
of $\C(m)$, where $m:=l(\mybar{y})$, $Q$ is some positive integer, and $R$ contains the constant function $\M\mapsto k$ for each positive integer $k\leq Q$. Then the definition also holds for $\vphi(\mybar{x},\mybar{y})$, $R$ and $\C(m)$.
\end{lem}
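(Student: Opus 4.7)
The plan is to extend the given finite definable partition of $\C(m)_{>Q}$ to a finite definable partition of all of $\C(m)$ by adjoining finitely many new pieces that sort the remaining pointed structures --- those with $|M|\leq Q$ --- according to the exact value of $|\vphi(\M^n,\mybar{a})|$. Two definability observations drive the construction: the predicate ``$|M|\leq Q$'' is expressed by the $\La$-sentence $\eta_Q := \exists z_1\cdots z_Q\,\forall w\,\bigvee_{i=1}^Q w=z_i$, and, for each $k\in\Na$, the predicate ``$|\vphi(\M^n,\mybar{y})|=k$'' is expressed by an $\La$-formula $\theta_k(\mybar{y})$ asserting the existence of exactly $k$ pairwise distinct $n$-tuples $\mybar{x}$ satisfying $\vphi(\mybar{x},\mybar{y})$.

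Writing the hypothesized partition of $\C(m)_{>Q}$ as $\{\ptmem_1,\ldots,\ptmem_s\}$ with defining formulas $\psi_i(\mybar{y})$ and measuring functions $h_i\in R$, I would form the partition of $\C(m)$ whose pieces are
\[
\ptmem_i^* := \{(\M,\mybar{a})\in\C(m) : \M\models\neg\eta_Q\wedge\psi_i(\mybar{a})\}\quad (1\leq i\leq s)
\]
together with
\[
\tau_k := \{(\M,\mybar{a})\in\C(m) : \M\models\eta_Q\wedge\theta_k(\mybar{a})\}\quad (0\leq k\leq Q^n).
\]
Note that $\ptmem_i^*=\ptmem_i$, and that the upper bound $k\leq Q^n$ suffices because any pointed structure with $|M|\leq Q$ has at most $Q^n$ solutions to $\vphi(\mybar{x},\mybar{a})$. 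This gives a finite definable partition of $\C(m)$: pointed structures in $\C(m)_{>Q}$ are sorted by the $\ptmem_i^*$ exactly as by the original $\ptmem_i$, while those in $\C(m)\setminus\C(m)_{>Q}$ fall into a unique $\tau_k$ according to the value of $|\vphi(\M^n,\mybar{a})|$.

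The measuring function for $\ptmem_i^*$ is the original $h_i\in R$, while the measuring function for $\tau_k$ is the constant $\M\mapsto k$. The latter lies in $R$: by hypothesis $\M\mapsto j\in R$ for each $1\leq j\leq Q$, and closure of $R$ under pointwise addition (\Cref{macremarks}) promotes this to $\M\mapsto k\in R$ for every $k\geq 1$; the function $\M\mapsto 0$ is included by the convention in \Cref{macremarks} for the trivially inconsistent case. I do not anticipate any substantive obstacle --- the lemma is essentially a bookkeeping argument leveraging the definability of cardinality bounds and of exact sizes of definable sets.
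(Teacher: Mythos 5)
Your proof is correct and follows essentially the same route as the paper: split off the structures with $|M|\leq Q$ by the ``at most $Q$ elements'' sentence and sort them by the exact value of $|\vphi(\M^n,\bar a)|$, using the constant functions as measuring functions. In fact you are a bit more careful than the paper's own write-up, which caps the index at $Q$ rather than $Q^n$ and so tacitly relies on closure of $R$ under addition (\Cref{macremarks}) to supply the larger constants when $n>1$; your explicit bound $k\leq Q^n$ and appeal to that closure make the bookkeeping airtight.
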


\begin{proof}
By the hypothesis of the lemma there exists a finite partition $\pt$ of $\C(m)_{>Q}$ with measuring functions $\{h_\ptmem:\ptmem\in\pt\}$ and defining $\La$-formulas $\{\psi_\ptmem(\mybar{y}):\ptmem\in\pt\}$. Let
\[
\Gamma_i:=\{(\M,\mybar{a}):\text{$\M\in\C(m)\setminus\C(m)_{>Q}$ and $|\vphi(\M^n,\mybar{a})|=i$}\}.
\]
Then $\{\Gamma_i:0\leq i\leq Q\}\cup\pt$ is a finite partition of $\C$ with measuring functions $\{g_i:0\leq i\leq Q\}\cup\{h_\ptmem:\ptmem\in\pt\}$, where $g_i(\M):=i$ for all $\M\in\C$. So the size clause holds for $\C$.

Let $\sigma_Q$ be the $\La$-sentence $\exists x_1\ldots\exists x_Q\forall y\bigvee_{1\leq i\leq Q}y=x_i$, i.e.\ $\sigma_Q$ says that there are at most $Q$ elements, and let $\vphi_i(\mybar{y})$ be the $\La$-formula $\exists!_i\mybar{x}\,\vphi(\mybar{x},\mybar{y})$, i.e.\ $\vphi_i(\mybar{a})$ says that $|\vphi(\M^n,\mybar{a})|=i$. Then the partition in the previous paragraph is defined by the $\La$-formulas $\{\vphi_i(\mybar{y})\wedge\sigma_Q:1\leq i\leq Q\}\cup\{\psi_\ptmem(\mybar{y})\wedge\neg\sigma_Q:\ptmem\in\pt\}$.
\end{proof}

Our last useful lemma is a compactness-like result:

\begin{lem}\label{reductmec}
Let $\C$ be a class of finite $\La$-structures. For $\La'\subseteq\La$ let $\C_{\La'}$ denote the class of all $\La'$-reducts of structures in $\C$. If $\C_{\La'}$ is an $R$-mec in $\La'$ for every finite $\La'\subseteq\La$, then $\C$ is an $R$-mec in $\La$.
\end{lem}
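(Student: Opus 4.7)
The idea is that first-order formulas have finite syntax, so any single $\La$-formula $\vphi(\mybar{x},\mybar{y})$ can be handled by passing to a suitable finite sublanguage $\La_0 \subseteq \La$ and then pulling the resulting partition back along the reduct map. The lemma is compactness-flavoured only in name; the real content is this finite-symbol observation.

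Here is the sequence of steps I would carry out. Fix an arbitrary parameter-free $\La$-formula $\vphi(\mybar{x},\mybar{y})$ with $n:=l(\mybar{x})\geq 1$ and $m:=l(\mybar{y})$, and let $\La_0$ be the finite sublanguage of $\La$ consisting of the (finitely many) non-logical symbols appearing in $\vphi$. By hypothesis, $\C_{\La_0}$ is an $R$-mec in $\La_0$. Applying \Cref{defnmec2} to $\vphi$, now regarded as an $\La_0$-formula, yields a finite definable partition $\pt_0$ of $\C_{\La_0}(m)$ with defining $\La_0$-formulas $\{\psi_{\ptmem_0}(\mybar{y}) : \ptmem_0 \in \pt_0\}$ and measuring functions $\{h_{\ptmem_0} : \ptmem_0 \in \pt_0\} \subseteq R$.

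Next, lift this partition back to $\C(m)$. Write $\M \mapsto \M \restriction \La_0$ for the reduct map and, for each $\ptmem_0 \in \pt_0$, put
\[
\ptmem := \{(\M,\mybar{a}) \in \C(m) : (\M\restriction \La_0,\mybar{a}) \in \ptmem_0\}.
\]
This gives a finite partition $\pt$ of $\C(m)$. Since $\La_0 \subseteq \La$, each $\psi_{\ptmem_0}(\mybar{y})$ is already an $\La$-formula, and $\M \models \psi_{\ptmem_0}(\mybar{a})$ iff $\M\restriction\La_0 \models \psi_{\ptmem_0}(\mybar{a})$ because satisfaction of an $\La_0$-formula depends only on the $\La_0$-reduct; hence $\pt$ is definable in $\La$ by exactly the same formulas. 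For the size clause, the very same reason gives $\vphi(\M^n,\mybar{a}) = \vphi((\M\restriction \La_0)^n,\mybar{a})$, so $|\vphi(\M^n,\mybar{a})| = h_{\ptmem_0}(\M\restriction\La_0)$ for every $(\M,\mybar{a}) \in \ptmem$. Composing $h_{\ptmem_0}$ with the reduct map gives a measuring function $h_\ptmem : \C \to \Na$ in $R$ (under the standing convention that the set $R$ of measuring functions is pulled back uniformly along reducts, as is the case for the polynomial and similar function classes of interest).

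I do not anticipate a genuine obstacle: once the finite-syntax observation is made, definability of the lifted partition and the size clause both drop out from the fact that the reduct preserves truth of $\La_0$-formulas. The only point needing care is the bookkeeping around $R$, namely that the same symbol $R$ denotes compatible function classes on $\C$ and on each $\C_{\La'}$; this is handled, as just noted, by identifying a function on $\C_{\La'}$ with its precomposition with the $\La'$-reduct map on $\C$.
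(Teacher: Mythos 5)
Your proposal is correct and takes essentially the same approach as the paper: the paper's proof rests on the same two observations, that any $\La$-formula uses only finitely many symbols and hence lives in a finite sublanguage $\La'$, and that satisfaction of $\La'$-formulas is invariant under passing to or from the $\La'$-reduct. You merely unpack those facts into an explicit lift of the partition and measuring functions, and your remark about the standing identification of $R$ across $\C$ and $\C_{\La'}$ via precomposition with the reduct map is the same convention the paper tacitly assumes.
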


\begin{proof}
This follows from \Cref{defnmec2}, whose first (second-order) quantifier ranges over $\La$-formulas, and the following two facts: Firstly, $\La$-formulas are finite and so any $\La$-formula is an $\La'$-formula for some finite $\La'\subseteq\La$. Secondly, for every $\La'$-formula $\chi(\mybar{y})$ (where $m:=l(\mybar{y}$), for every $\La'$-reduct $\M'$ of an $\La$-structure $\M$ and for every $\mybar{a}\in M^m$, $\M'\models\chi(\mybar{a})$ if and only if $\M\models\chi(\mybar{a})$.
\end{proof}


\subsection{Examples and non-examples}
\label{subsec:examples}

Following on from \Cref{simplestexample}, we provide a number of examples and non-examples of R-mecs. In order to explain the first example (\Cref{cyclicdirectsum}) we require a definition and a lemma:

\begin{defn}[Disjoint union of classes]\label{disjointunionclasses}
Consider $\C_1,\ldots,\C_k$, where each $\C_i$ is a class of $\La_i$-structures. Define the \emph{disjoint union} of $\C_1,\ldots,\C_k$ to be
\[
\C_1\sqcup\dots\sqcup\C_k:=\{\M_1\sqcup\dots\sqcup\M_k:\M_i\in\C_i\},
\]
where we define a first-order structure on $\M_1\sqcup\dots\sqcup\M_k$ as follows: The domain is $M_1\cup\dots\cup M_k$, which we make formally disjoint if necessary. The language is $\La_1\sqcup\dots\sqcup\La_k$, which has a sort $S_i$ for each $M_i$ and contains all $\La_i$-symbols for every $i\in\{1,\ldots,k\}$, with each $\La_i$-symbol being restricted to the sort $S_i$.
\end{defn}

\begin{lem}\label{disjointunionclasseslem}
Let $\C_i$ be an $R_i$-mec in $\La_i$. Then $\C_1\sqcup\dots\sqcup\C_k$ is an $R$-mec in $\La:=\La_1\sqcup\dots\sqcup\La_k$, where $R$ is the set generated by $R_1\cup\cdots\cup R_k$ under addition and multiplication.
\end{lem}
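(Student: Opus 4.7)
The plan is to combine the $R_i$-mec properties of the $\C_i$ by exploiting the multi-sorted structure of $\La = \La_1 \sqcup \cdots \sqcup \La_k$. By the \nameref{projlemmec} (\Cref{projlemmec}) it suffices to treat an $\La$-formula $\vphi(x,\mybar{y})$ with a single object variable $x$, which has some specified sort $S_j$. Partition $\mybar{y}$ by sort as $(\mybar{y}_1,\ldots,\mybar{y}_k)$ with $m_i:=l(\mybar{y}_i)$.

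The first key step is a syntactic decomposition: every $\La$-formula whose free variables are grouped by sort is equivalent, in any disjoint union structure $\M_1\sqcup\cdots\sqcup\M_k$, to a Boolean combination of $\La_i$-formulas each using variables of a single sort only. This follows by induction on formula complexity, since atomic formulas are already single-sorted (each $\La_i$-relation is restricted to $S_i$, and equality between distinct sorts is always false), and the quantifier $\exists w^{S_i}$ can be pushed past any conjunct of a DNF expansion not mentioning $w$. Applying this to $\vphi(x,\mybar{y})$ and passing to the finite Boolean algebra generated by its single-sort constituents, $\vphi$ rewrites as a pairwise disjoint disjunction of ``atoms''
\[
\vphi(x,\mybar{y}) \equiv \bigvee_{\alpha\models\vphi}\Bigl(\alpha^{(j)}(x,\mybar{y}_j) \wedge \bigwedge_{i\neq j}\alpha^{(i)}(\mybar{y}_i)\Bigr),
\]
where each $\alpha^{(i)}$ is an $\La_i$-formula.

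The second step applies the $R_j$-mec property of $\C_j$ to each $\alpha^{(j)}(x,\mybar{y}_j)$, yielding a finite definable partition of $\C_j(m_j)$ with measuring functions in $R_j$; for each $i\neq j$, the formula $\alpha^{(i)}(\mybar{y}_i)$ itself induces a two-piece definable partition of $\C_i(m_i)$, on which $\alpha^{(i)}$ acts as a $0/1$-valued indicator. Taking a common refinement $\Phi_i$ for each sort~$i$ produces a finite definable partition of $\C_i(m_i)$ that simultaneously witnesses every atom component. The product $\Phi:=\{\ptmem_1\times\cdots\times\ptmem_k : \ptmem_i\in\Phi_i\}$ is then a finite partition of $\C(m)$, and each of its classes is $\La$-definable by the conjunction of the $\La_i$-defining formulas of its factors.

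Finally, for $(\M,\mybar{a}) = (\M_1\sqcup\cdots\sqcup\M_k,\mybar{a}_1\cdots\mybar{a}_k)$ lying in a class $\ptmem=\ptmem_1\times\cdots\times\ptmem_k$, disjointness of the atoms gives
\[
|\vphi(\M,\mybar{a})| = \sum_{\alpha} h_{\alpha,\ptmem_j}(\M_j),
\]
summed over those atoms $\alpha\models\vphi$ for which $\alpha^{(i)}$ holds throughout $\ptmem_i$ for every $i\neq j$; here $h_{\alpha,\ptmem_j}\in R_j$ is the measuring function supplied by the $R_j$-mec property applied to $\alpha^{(j)}$. After reinterpreting each $R_i$-function as an $R$-function on $\C$ via the $i$th-summand projection $\M_1\sqcup\cdots\sqcup\M_k\mapsto\M_i$, the above sum lies in $R$ and hence witnesses the $R$-mec property for $\vphi$. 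I expect the main technical obstacle to be the syntactic decomposition of Step~1: the Boolean-atom and quantifier-pushing manipulations are routine but notationally fiddly, and some care is needed to make precise the convention that $R_i$ is reinterpreted as a set of functions on $\C$ before invoking closure under addition and multiplication.
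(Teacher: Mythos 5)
Your proof is correct. The core observation is the same as the paper's: over a disjoint union $\M_1\sqcup\cdots\sqcup\M_k$, an $\La$-formula rewrites as a pairwise inconsistent finite disjunction of conjunctions of single-sorted $\La_i$-formulas. The routing differs, though. The paper (treating $k=2$ and inducting on $k$) keeps the full tuple of object variables across both sorts, so the size of each disjunct is a genuine product $|\chi(\M_1,\mybar{a}_1)|\cdot|\theta(\M_2,\mybar{a}_2)|$ and the multiplicative closure of $R$ is used directly to absorb these products. You instead reduce to a single object variable of a fixed sort $S_j$ via the \nameref{projlemmec} (\Cref{projlemmec}) before decomposing, after which each atom carries the object variable in only one factor $\alpha^{(j)}$, the remaining $\alpha^{(i)}$ acting as $0/1$ indicators; the sizes are then sums of $R_j$-functions, and the multiplicative closure of $R$ enters only through the conclusion of the Projection Lemma. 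Both routes are valid and reach the same $R$. Yours is more verbose but usefully makes explicit two points the paper's sketch leaves tacit: the passage to Boolean-algebra atoms to secure disjointness, and the reinterpretation of $R_i$-functions as functions on $\C$ via the summand projections $\M_1\sqcup\cdots\sqcup\M_k\mapsto\M_i$.
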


\begin{proof}
We restrict our attention to the case $k=2$, the general case following by induction.

Consider an $\La$-formula $\vphi(\mybar{x}_1,\mybar{x}_2;\mybar{y}_1,\mybar{y}_2)$, where $\mybar{x}_i$ and $\mybar{y}_i$ are of sort $S_i$. By an induction on the complexity of the formula, one can show that $\vphi(\mybar{x_1},\mybar{x_2};\mybar{y_1},\mybar{y_2})$ is equivalent to a finite disjunction of $\La$-formulas of the form $\chi(\mybar{x}_1,\mybar{y}_1)\wedge\theta(\mybar{x}_2,\mybar{y}_2)$, where $\chi$ is an $\La_1$-formula, $\theta$ is an $\La_2$-formula, and the disjuncts are pairwise inconsistent. Since the domains of $\M_1\in\C_1$ and $\M_2\in\C_2$ are disjoint, we have
\[
|\chi(\M_1\sqcup\M_2,\mybar{a}_1)\wedge\theta(\M_1\sqcup\M_2,\mybar{a}_2)|=|\chi(\M_1,\mybar{a}_1)|\cdot |\theta(\M_2,\mybar{a}_2)|.
\]
One then proceeds by using the facts that the disjuncts are pairwise inconsistent, thus allowing summation, and that each $\C_i$ is an $R$-mec.
\end{proof}

\begin{exmpl}\label{cyclicdirectsum}
Consider the class $\C$ of finite cyclic groups and for arbitrary $k\in\Npos$ define $\C_k:=\{C_1\oplus\dots\oplus C_k:C_i\in\C\}$. Let $\La$ be the language of groups (with or without a constant symbol for the identity element -- recall \Cref{adding_constants}). Then $\C_k$ is a multidimensional exact class in $\La'$, where $\La'$ is $\La$ adjoined with a unary predicate $P_i$ for each part of the direct sum:
\[
{P_i}^{C_1\oplus\dots\oplus C_k}:=\{(0,\ldots,0,\hspace{-1em}\underset{\underset{\text{$i^\mathrm{th}$ place}}{\uparrow}}{a}\hspace{-0.9em},0,\ldots,0):a\in C_i\}.
\]
\end{exmpl}

\begin{proof}
Theorem 3.14 in \cite{macstein1} states that $\C$ is a $1$-dimensional asymptotic class in $\La$ (see \Cref{NdimAC}). Inspection of the proof of this theorem shows that $\C$ is in fact an exact class, since the measuring functions yield exact sizes and not just approximaitons. So by \Cref{disjointunionclasseslem}, $\underbrace{\C\sqcup\cdots\sqcup\C}_{\text{$k$ times}}$ is an exact class in $\underbrace{\La\sqcup\cdots\sqcup\La}_{\text{$k$ times}}$. We now use the work in \cite{amsw} and \secref{2.4} of \cite{wolfphd} regarding interpretability: Since $\La'$ is equipped with the predicates $P_i$, it follows that $\C_k$ and $\C\sqcup\cdots\sqcup\C$ are $\0$-bi-interpretable and thus that $\C_k$ is an exact class.
\end{proof}

\begin{rem} We comment on \Cref{cyclicdirectsum}.
The class $\C$ of finite cyclic groups is both a multidimensional exact class and a $1$-dimensional asymptotic class, so one might wonder whether it could be a ``$1$-dimensional exact class''. However, the notion of an $N$-dimensional exact class is inconsistent: Consider two disjoint definable sets $A,B\subseteq M$ with $|A|=\alpha|M|^{a/N}$ and $|B|=\beta|M|^{b/N}$, where $a>b$. Then their union $A\cup B$, which is definable, has size $\alpha|M|^{a/N}+\beta|M|^{b/N}$, which cannot be expressed in the form $\mu|M|^{d/N}$ for a dimension--measure pair $(d,\mu)$. This is not an issue for an $N$-dimensional \emph{asymptotic} class, since $|M|^{a/N}$ swamps $|M|^{b/N}$ as $|M|\rightarrow\infty$. It is also not an issue for a multidimensional exact class, where one is not bound to dimension--measure pairs.
\end{rem}

\begin{exmpl}[Proposition 4.4.2 in \cite{gms}]\label{gmsexmpl}
Consider the class of homocyclic groups
\[
\C:=\{(\Z/p^n\Z)^m:\text{$p$ is prime and\ }n,m\in\Npos\}
\]
in the language $\La:=\{+\}$. This class is an $R$-mec, where $R$ is generated by functions of the form
\[
\sum_{i=0}^r\sum_{j=-rd}^{rd}c_{ij}p^{m(in+j)},
\]
where $r$ is the length of the object-variable tuple of the given $\La$-formula (see \Cref{differentvariables}); $d$ is a positive integer that is constructively determined by the $\La$-formula; and the $c_{ij}$ are integers that depend on the $\La$-formula, with $c_{ij}:=0$ whenever $in+j<0$. (Each group $(\Z/p^n\Z)^m\in\C$ is determined by a triple $(p,n,m)$, so by defining a function on such triples we also define a function on $\C$.)
\end{exmpl}

The following two examples are taken from \cite{amsw}:

\begin{exmpl}\label{r-modules}
Let $R$ be a ring and let $\C$ be the class of all finite $R$-modules. Then there exists $R'$ such that $\C$ is an $R'$-mec.
\end{exmpl}

\begin{exmpl}\label{abelian-groups}
There exists $R$ such that the class of finite abelian groups is an $R$-mec.
\end{exmpl}

Further examples will arise as this paper progresses. We now turn our attention to non-examples, which are often just as interesting.

\begin{nonexmpl}[Example 3.1 in \cite{macstein1}]\label{orderingnonexample}
The class $\C$ of all finite linear orders in (any extension of) the language $\La=\{<\}$ does not form a weak $R$-mec for any $R$.
\end{nonexmpl}

\begin{proof}
Let $\vphi(x,y)$ be the formula $x<y$ and consider the finite linear order $\M_k:=\{a_0<\cdots <a_k\}$. Then $|\vphi(\M_k,a_i)|=i$. As we let $k$ increase and let $i$ vary we define arbitrarily many subsets of distinct sizes. Thus no finite number of functions can yield $|\vphi(\M_k,a_i)|$ for all $k,i\in\Na$. Let's make that argument a little more rigorous.

By way of contradiction, suppose that there exists $R$ such that $\C$ forms a weak $R$-mec. So for the formula $\vphi(x,y)$ there exists a finite partition $\pt$ of $\C(1)$ with measuring functions $\{h_\ptmem:\ptmem\in\pt\}\subseteq R$. Let $t:=|\pt|$ and consider the finite linear order $\M_t$. Then $t$ measuring functions are not enough for this structure, since there are $t+1$ different sizes of the definable subsets, namely $|\vphi(\M_t,a_0)|=0,\ldots,|\vphi(\M_t,a_t)|=t$. A contradiction.
\end{proof}

The following non-example is informative, as it shows that the choice of language in \Cref{gmsexmpl} is important:

\begin{nonexmpl}\label{gmsnonexmpl}
Let $p$ be prime. Then the class $\{\Z/p^n\Z:n\in\Npos\}$ of multiplicative monoids in (any extension of) the language $\La=\{\times\}$ does not form a weak $R$-mec for any $R$.
\end{nonexmpl}

\begin{proof}
Let $R$ be any set of functions from $\C$ to $\Na$ and let $\vphi(x,y)$ be the formula $\exists z\,(x=z\times y)$. Then $|\vphi(\Z/p^n\Z,p^i)|=p^{n-i}$. So as we let $n$ increase and let $i$ vary we define arbitrarily many subsets of distinct sizes. Thus, by the same argument given in the proof of \Cref{orderingnonexample}, no finite number of measuring functions can suffice for $|\vphi(\Z/p^n\Z,p^i)|$ for all $n,i\in\Na$.
\end{proof}

\begin{rem}~
\begin{thmlist}
\item \Cref{orderingnonexample,gmsnonexmpl} are special cases of the general fact that an ultraproduct of a weak multidimensional asymptotic class cannot have the strict order property; see \cite{amsw}. (See Definition 2.14 in \cite{casanovas} or Exercise 8.2.4 in \cite{tentziegler} for a definition of the strict order property.) Note that we do mean the \emph{strict} order property here. For example, the Paley graphs form an asymptotic class (Example 3.4 in \cite{macstein1}), but any ultraproduct of them has unstable theory (see \Cref{paleyrem}).
\item The issue preventing \Cref{gmsnonexmpl} from being an $R$-mec is the unbounded exponent $n$. If the exponent is bounded, then one can have an $R$-mec, as shown by the work of Bello Aguirre in \cite{bello} and \cite{bellophd}.
\end{thmlist}
\end{rem}

We now cite two non-examples concerning ultraproducts, the random graph and the random tournament,\footnote{Due to its different guises, the random graph goes by various names, including the `Rado graph' and `the generic (countable homogeneous) graph'. The random tournament has similar aliases.} which are covered extensively in the literature, for instance \cite{bellslom}, Exercise 2.5.19 in \cite{marker} and Exercise 1.2.4 in \cite{tentziegler} (ultraproducts), p.~232 of \cite{cher88}, p.~17 of \cite{cher98}, \S\secref{1--2} of \cite{evans}, p.~435 of \cite{lach84b}, pp.~50--52 of \cite{marker} and Exercise 3.3.1 in \cite{tentziegler} (the random graph and the random tournament). We cite these two non-examples in order to highlight a difference between multidimensional exact classes and multidimensional asymptotic classes (\Cref{paleyrem}).

\begin{nonexmpl}[\hspace{-.05em}\cite{amsw} or Non-Example 2.3.12 in \cite{wolfphd}]\label{randomgraphnonexmpl}
The random graph is not elementarily equivalent to an ultraproduct of a multidimensional exact class.
\end{nonexmpl}

\begin{nonexmpl}[\hspace{-.05em}\cite{amsw} or Non-Example 2.3.14 in \cite{wolfphd}]\label{randomtournamentnonexmpl}
The random tournament is not elementarily equivalent to an ultraproduct of a multidimensional exact class.
\end{nonexmpl}

(Due to a typesetting error, the tournament relation $a\rightarrowtriangle b$ is incorrectly displayed as $a\,\raisebox{4pt}{$\cdot$}\, b$ in Non-Example 2.3.14 in \cite{wolfphd}.)

\begin{rem}\label{paleyrem}
The situation is quite different for asymptotic classes: The random graph is elementarily equivalent to any infinite ultraproduct of the class of Paley graphs, which is a $1$-dimensional asymptotic class (Example 3.4 in \cite{macstein1}), and the random tournament is elementarily equivalent to any infinite ultraproduct of the class of Paley tournaments, which is also a $1$-dimensional asymptotic class (Example 3.5 in \cite{macstein1}). This is an interesting phenomenon, especially in light of Theorem 7.5.6 in \cite{cherhrush} and \Cref{dugaldsconjecturefull}. We will discuss it further in \Cref{bipartitequestion}.
\end{rem}


\section{Smooth approximation and exact classes}
\label{section:smoothapprox}

The goal of this section is to prove \Cref{sapproxmecshort}, which states that finite structures smoothly approximating an $\aleph_0$-categorical structure form a multidimensional exact class. In \Cref{subsec:smoothapprox} we define the notion of smooth approximation and then provide some examples. In \Cref{subsec:smoothapproxexact} we state and prove the result.

\subsection{Smooth approximation}
\label{subsec:smoothapprox}

The notion of smooth approximation was introduced by Lachlan in the 1980s, arising as a generalisation of $\aleph_0$-categorical, $\aleph_0$-stable structures \cite{chl}, in particular Corollary 7.4 of that paper. \cite{cl}, \cite{kl}, \cite{lach84}, \cite{lach87} and \cite{lachsurvey} are also relevant, but the key texts on smooth approximation itself are \cite{klm} by Kantor, Liebeck and Macpherson and \cite{cherhrush} by Cherlin and Hrushovski. A history of the development of the notion is to be found in \secref{1.1} of \cite{cherhrush} and there is a survey article \cite{macsmooth}, which also contains improvements and errata to \cite{klm}. Smooth approximation also arises in the context of asymptotic classes in \cite{elwesphd}, \cite{elwes}, \cite{macstein1} and \cite{macstein2}.

For $\La$-structures $\M$ and $\N$ we use the notation $\N\substruc\M$ to mean that $\N$ is an $\La$-substructure of $\M$.

\begin{defn}[Homogeneous substructure]\label{homsubstruc}
Let $\M$ and $\N$ be $\La$-structures. $\N$ is a \emph{homogeneous substructure}\footnote{We define `homogeneous substructure' as one term, not as the conjunction of two words; that is, `homogeneous substructure' does not mean a substructure that is homogeneous.} of $\M$, notationally $\N\homsub\M$, if $\N\substruc\M$ and for every $k\in\Npos$ and every pair $\mybar{a},\mybar{b}\in N^k$, $\mybar{a}$ and $\mybar{b}$ lie in the same $\Aut(\M)$-orbit if and only if $\mybar{a}$ and $\mybar{b}$ lie in the same $\Aut_{\{ N\}}(\M)$-orbit, where
\[
\Aut_{\{ N\}}(\M):=\{\sigma\in\Aut(\M):\sigma (N)=N\}.
\]
\end{defn}

\begin{defn}[Smooth approximation]\label{defnsmoothapprox}
An $\La$-structure $\M$ is \emph{smoothly approximable} if $\M$ is $\aleph_0$-categorical and there exists a sequence $(\M_i)_{i<\omega}$ of finite homogeneous substructures of $\M$ such that $M_i\subset M_{i+1}$ for all $i<\omega$ and $\bigcup_{i<\omega}M_i=M$. We say that $\M$ is \emph{smoothly approximated} by the $\M_i$.
\end{defn}

We provide some examples of smoothly approximable structures, starting with a trivial example:

\begin{exmpl}
Let $\M$ be a countably infinite set in the language of equality. Enumerate $\M$ as $(a_i:i<\omega)$ and let $\M_i=\{a_0,\ldots,a_i\}$. Then each $\M_i$ is a finite homogeneous substructure of $\M$ and $\M=\bigcup_{i<\omega}\M_i$.
\end{exmpl}

\begin{exmpl}\label{smoothapproxexmpl1}
Consider a language $\La:=\{I_1,I_2\}$, where $I_1$ and $I_2$ are binary relation symbols. Let $\M$ be a countable $\La$-structure where $I_1^\M$ and $I_2^\M$ are equivalence relations such that $I_1^\M$ has infinitely many classes, $I_2^\M$ refines $I_1^\M$, every $I_1$-equivalence class contains infinitely many $I_2$-equivalence classes, and every $I_2$-equivalence class is infinite; that is, $\M$ is partitioned into infinitely many $I_1$-equivalence classes, each of which is then partitioned into infinitely many $I_2$-equivalence classes, each of which is infinite. Note that $\M$ is unique up to isomorphism and hence $\aleph_0$-categorical, since the structure is first-order expressible in $\La$.

Enumerate the $I_1$-equivalence classes as $(a_i:i<\omega)$ and the $I_2$-equivalence classes within each $a_i$ as $(a_{ij}:j<\omega)$. Finally, enumerate the elements of each $a_{ij}$ as $(a_{ijk}:k<\omega)$. Let $\M_{(r,s,t)}:=\{a_{ijk}:i\leq r, j\leq s,k\leq t\}$. Then each $\M_{(r,s,t)}$ is a finite homogeneous substructure of $\M$ and $\M=\bigcup_{r<\omega}\M_{(r,r,r)}$.

Note that this example straightforwardly generalises to the case of $n$ nested equivalence relations for any $n<\omega$.
\end{exmpl}

\begin{exmpl}\label{smoothapproxexmpl2}
Let $\M$ be the direct sum of $\omega$-many copies of the additive group $\Z/p^2\Z$, where $p$ is some fixed prime. Note that $\M$ is $\aleph_0$-categorical, which can be seen via Szmielew invariants (see Appendix A.2 in \cite{hodges}). Let $\M_i$ consist of the first $i$ copies of $\Z/p^2\Z$. Then each $\M_i$ is a finite homogeneous substructure of $\M$ and $\M=\bigcup_{i<\omega}\M_i$.
\end{exmpl}


\subsection{Smooth approximation is exact}
\label{subsec:smoothapproxexact}

We now come to \Cref{sapproxmecshort}, the central result of this section. We first give the main proof, leaving the necessary technical lemmas until afterwards.

\begin{propn}\label{sapproxmecshort}
Let $\M$ be an $\La$-structure smoothly approximated by finite homogeneous substructures $(\M_i)_{i<\omega}$. Then there exists $R$ such that $\C:=\{\M_i:i<\omega\}$ is an $R$-mec in $\La$.
\end{propn}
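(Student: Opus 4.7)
The plan is to combine the Projection Lemma with the oligomorphic action supplied by the $\aleph_0$-categoricity of $\M$, and with the orbit-transfer encoded in the hypothesis $\M_i\homsub\M$.

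By the Projection Lemma (\Cref{projlemmec}) it suffices to verify the $R$-mec definition for formulas $\vphi(x;\mybar{y})$ with a single object variable, where $m:=l(\mybar{y})$. Since smooth approximability forces $\aleph_0$-categoricity (\Cref{defnsmoothapprox}), Ryll--Nardzewski then gives finitely many $\Aut(\M)$-orbits $O_1,\ldots,O_k$ on $M^m$, each isolated by a parameter-free $\La$-formula $\psi_l(\mybar{y})$, and the $\psi_l$ are mutually exclusive and jointly exhaustive on $M^m$ modulo $\Th(\M)$.

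I then propose the partition $\pt:=\{\ptmem_1,\ldots,\ptmem_k\}$ of $\C(m)$ with $\ptmem_l:=\{(\M_i,\mybar{a})\in\C(m):\mybar{a}\in O_l\}$. The size clause drops out of the homogeneity: if $(\M_i,\mybar{a}),(\M_i,\mybar{b})\in\ptmem_l$, then $\mybar{a}$ and $\mybar{b}$ share an $\Aut(\M)$-orbit, hence by $\M_i\homsub\M$ an $\Aut_{\{M_i\}}(\M)$-orbit; restricting a witnessing $\sigma$ to $M_i$ yields an $\La$-automorphism of $\M_i$ taking $\mybar{a}$ to $\mybar{b}$, so $|\vphi(\M_i,\mybar{a})|=|\vphi(\M_i,\mybar{b})|$. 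Call this common value $h_l(\M_i)$, and take $R:=\{h_1,\ldots,h_k\}$ together with the constant functions needed to invoke \Cref{finiteexceptions}.

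The hard part is the definability clause: producing $\La$-formulas $\theta_l(\mybar{y})$ with $\M_i\models\theta_l(\mybar{a})$ iff $(\M_i,\mybar{a})\in\ptmem_l$ for every $(\M_i,\mybar{a})\in\C(m)$. The naive choice $\theta_l:=\psi_l$ does not obviously work: since $\psi_l$ may involve quantifiers, witnesses that exist in $\M$ could lie outside $M_i$, so $\M_i\models\psi_l(\mybar{a})$ can disagree with $\M\models\psi_l(\mybar{a})$. My plan is to show that for each $l$ there is a threshold $N_l$ such that for $i\geq N_l$ the formula $\psi_l$ is absolute between $\M_i$ and $\M$ on $M_i^m$: once $\M_i$ meets every $\Aut(\M)$-orbit on tuples of arity up to the quantifier rank of the $\psi_l$, the hypothesis $\M_i\homsub\M$ transports any needed witness back inside $M_i$ via a suitable element of $\Aut_{\{M_i\}}(\M)$. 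Because $|M_i|\to\infty$ along the chain, only finitely many ``small'' $\M_i$ fail this, and \Cref{finiteexceptions} absorbs them. Establishing the absoluteness rigorously is the technical crux; I expect it to follow from an induction on quantifier rank powered by oligomorphism (finitely many formulas modulo $\Th(\M)$ at each rank) combined with the orbit-equivalence built into $\M_i\homsub\M$, and if that is insufficient, from the finer structural results on envelopes of smoothly approximable structures in \cite{klm}.
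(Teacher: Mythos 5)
Your proposal has the right skeleton and matches the paper's proof in its first two thirds: both use Ryll--Nardzewski to get finitely many $\Aut(\M)$-orbits on $M^m$, both take the partition of $\C(m)$ indexed by these orbits, and both establish the size clause by using $\M_i\homsub\M$ to pass from an $\Aut(\M)$-orbit to an $\Aut_{\{M_i\}}(\M)$-orbit and then restricting the witnessing automorphism to $\M_i$. You also correctly identify the definability clause as the genuine obstacle: the isolating formulas $\psi_l$ contain quantifiers, so $\M\models\psi_l(\mybar{a})$ and $\M_i\models\psi_l(\mybar{a})$ need not agree.

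However, the proof as written stops short of closing that gap, and the route you sketch to close it is not sound as stated. Suppose in the quantifier-rank induction you reach $\psi=\exists z\,\chi(z,\mybar{y})$ with $\M\models\psi(\mybar{a})$ witnessed by some $c\in M\setminus M_i$. You want $c'\in M_i$ with $(c',\mybar{a})$ in the same $\Aut(\M)$-orbit as $(c,\mybar{a})$. But $(c,\mybar{a})$ is not a tuple from $M_i^{m+1}$, so the hypothesis $\M_i\homsub\M$ says nothing about it directly; and ``$\M_i$ meets every $\Aut(\M)$-orbit up to a given arity'' is too weak --- you need the orbit of $(c,\mybar{a})$ to be hit with the last $m$ coordinates \emph{already pinned} to $\mybar{a}$, i.e.\ a fibrewise realisation statement, which is not an automatic consequence of oligomorphicity plus meeting orbits. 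Your fallback to \cite{klm} is the right instinct, but as it stands the proposal has a hole exactly where the technical content lives.

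The paper's proof (\Cref{eventuallyiff}) takes a cleaner path to the same endpoint: expand $\M$ to $\M^*$ in the canonical language $\La^*$ (new unary predicates for the $\Aut(\M)$-orbits); then $\Th(\M^*)$ eliminates quantifiers (\Cref{smoothapproxqe}), so quantifier-free $\La^*$-formulas are automatically absolute between the substructures ${\M_i}^*$ and $\M^*$. Compactness extracts a single sentence $\tau\in\Th(\M^*)$ forcing $\chi\leftrightarrow\delta$ with $\delta$ quantifier-free, and the $\forall\exists$-axiomatisation of $\Th(\M^*)$ from Proposition~5.4 of \cite{klm} (which is precisely the nontrivial input you suspected you would need) guarantees ${\M_i}^*\models\tau$ for all but finitely many $i$. \Cref{finiteexceptions} then disposes of the finitely many exceptions, exactly as in your plan. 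So the fix is not more quantifier-rank bookkeeping but passing to the canonical language where absoluteness is free at the quantifier-free level, and then leaning on the $\forall\exists$-axiomatisability result to control the finitely many sentences that matter.
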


\begin{proof}
Let $\vphi(\mybar{x},\mybar{y})$ be an $\La$-formula with $n:=l(\mybar{x}) \geq 1$ and $m:=l(\mybar{y})$.

We first cover the size clause. We use the Ryll-Nardzewski Theorem: Since $\M$ is $\aleph_0$-categorical, $\Aut(\M)$ acts oligomorphically on $\M$ and thus $\M^m$ has only finitely many $\Aut(\M)$-orbits, say $\Theta_1,\ldots,\Theta_d$. We use these orbits to define a finite partition $\ptmem_1,\ldots,\ptmem_d$ of $\C(m)=\{(\M_i,\mybar{a}):i<\omega,\mybar{a}\in {M_i}^m\}$:
\[
(\M_i,\mybar{a})\in\ptmem_j \,\text{ iff }\, \mybar{a}\in\Theta_j.
\]
Define $\ptmem_j^{\M_i}:=\{\mybar{a}\in {M_i}^m:(\M_i,\mybar{a})\in\ptmem_j\}$ and let $\mybar{a},\mybar{b}\in {M_i}^m$. Then
\begin{equation}\label{crucialimplication}
\begin{aligned}
\mybar{a},\mybar{b}\in\ptmem_j^{\M_i} & \iff \mybar{a},\mybar{b}\in\Theta_j\\
& \,\implies \text{$\mybar{a}$ and $\mybar{b}$ lie in the same $\Aut_{\{ M_i\}}(\M)$-orbit}\\
& \hphantom{\iff}\,\text{(since $\M_i\homsub\M$)}\\
 & \,\implies |\vphi({\M_i}^n,\mybar{a})|=|\vphi({\M_i}^n,\mybar{b})|.
\end{aligned}
\end{equation}
We justify the last implication: Since $\mybar{a}$ and $\mybar{b}$ lie in the same $\Aut_{\{ M_i\}}(\M)$-orbit, there is some $\sigma\in\Aut_{\{ M_i\}}(\M)$ such that $\sigma(\mybar{a})=\mybar{b}$. But $\sigma\restriction M_i$ is an automorphism of $\M_i$ and thus $\M_i\models\vphi(\mybar{c},\mybar{a})$ if and only if $\M_i\models\vphi(\sigma(\mybar{c}),\sigma(\mybar{a}))$. Therefore $\sigma\colon\vphi({\M_i}^n,\mybar{a})\to\vphi({\M_i}^n,\mybar{b})$ is a bijection and hence $|\vphi({\M_i}^n,\mybar{a})|=|\vphi({\M_i}^n,\mybar{b})|$.

Define $h_j(\M_i):=|\vphi({\M_i}^n,\mybar{a})|$, where $\mybar{a}$ is some arbitrary element of $\ptmem_j^{\M_i}$ (if no such $\mybar{a}$ exists, then the value of $h_j$ at $\M_i$ can be chosen to be anything, say 0); this function is well-defined by \eqref{crucialimplication}. Then $\ptmem_1,\ldots,\ptmem_d$ and $h_1,\ldots,h_d$ satisfy the size clause.

We now come to the definability clause. We use the Ryll-Nardzewski Theorem again: Each orbit $\Theta_j$ is the solution set of an isolated $m$-type and so the $\La$-formula isolating this type defines $\Theta_j$ in $\M$; let $\psi_j(\mybar{y})$ be the isolating formula for $\Theta_j$. So $\M\models\psi_j(\mybar{a})$ if and only if $\mybar{a}\in\Theta_j$. We claim that the following is eventually true, i.e.\ there exists $Q\in\Na$ such that for each $\psi_j$, if $i>Q$, then
\begin{equation}\label{eventualequiv}
\M_i\models\psi_j(\mybar{a}) \iff \mybar{a}\in\ptmem_j^{\M_i}
\end{equation}
for every $\mybar{a}\in {M_i}^m$. By \Cref{finiteexceptions} this suffices to prove the definability clause.

We prove this claim: Apply \Cref{eventuallyiff} to $\psi_j$ to obtain $Q_j\in\Na$ such that if $i>Q_j$ and $\mybar{a}\in {M_i}^m$, then
\begin{equation}\label{Qj}
\M\models\psi_j(\mybar{a}) \iff \M_i\models\psi_j(\mybar{a}).
\end{equation}
Let $Q:=\max\{Q_j:1\leq j\leq k\}$. Consider $\mybar{a}\in {M_i}^m$ with $i>Q$. Then
\[
\M_i\models\psi_j(\mybar{a}) \overset{\eqref{Qj}}{\vphantom{\in}\iff\vphantom{\in}} \M\models\psi_j(\mybar{a}) \iff \mybar{a}\in\Theta_j \iff \mybar{a}\in\ptmem_j^{\M_i}
\]
and so \eqref{eventualequiv} holds.
\end{proof}

\begin{rem}
The proofs of \Cref{qeexample,sapproxmecshort} rest on the same property, namely the existence of a uniform bound on the number of types in each structure in $\C$. In the proof of \Cref{qeexample} this uniformity arises from the language $\La$ directly: We found the isolating formulas $\psi_1(\mybar{y}),\ldots,\psi_d(\mybar{y})$ before considering structures in $\C$. In the proof of \Cref{sapproxmecshort} this uniform{\-}ity arises from the oligomorphicity of $\M$, which is then passed down to the homogeneous substructures $\M_i$.
\end{rem}

\begin{defn}[Canonical language]\label{defncanonlang} We define the \emph{canonical language}\,\footnote{Note that the term \emph{canonical language} is sometimes used to refer to the smaller language $\La^*\setminus\La$. We avoid this usage.} of $\M$ to be
\[
\La^*:=\La\cup\{P_\Theta:\text{$\Theta$ is a $\Aut(\M)$-orbit of $\M$}\},
\]
where each $P_\Theta$ is a new unary predicate symbol. We expand $\M$ to an $\La^*$-structure $\M^*$ by defining the assignment of each $P_\Theta$ in $\M^*$ to be $\Theta$. We expand each $\M_i$ to an $\La^*$-structure ${\M_i}^*$ by defining the assignment of each $P_\Theta$ to be $\Theta\cap M_i$.
\end{defn}

\Cref{sameauto,smoothapproxqe} are standard and we state them without proof:

\begin{lem}\label{sameauto}
$\Aut(\M)=\Aut(\M^*)$.
\end{lem}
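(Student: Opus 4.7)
The plan is to prove the two inclusions $\Aut(\M) \subseteq \Aut(\M^*)$ and $\Aut(\M^*) \subseteq \Aut(\M)$ separately; both are essentially immediate from the definitions, so this will be a very short argument.

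For the forward inclusion, take $\sigma \in \Aut(\M)$. Since $\M^*$ is an $\La^*$-expansion of $\M$ and $\La^* \setminus \La$ consists only of the new unary predicates $P_\Theta$, it suffices to check that $\sigma$ preserves each $P_\Theta^{\M^*} = \Theta$ setwise. But $\Theta$ is by definition an $\Aut(\M)$-orbit on $M$, so for any $a \in M$ we have $\sigma(a) \in \Theta$ if and only if $a \in \Theta$. Hence $\sigma \in \Aut(\M^*)$.

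For the reverse inclusion, take $\sigma \in \Aut(\M^*)$. Since $\M$ is the $\La$-reduct of $\M^*$ and $\La \subseteq \La^*$, any bijection of the common underlying set $M$ that preserves the interpretation of every $\La^*$-symbol in particular preserves the interpretation of every $\La$-symbol; hence $\sigma \in \Aut(\M)$.

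There is no real obstacle here: both directions reduce to the observation that the new predicates $P_\Theta$ are by construction exactly $\Aut(\M)$-invariant, so adding them to the language does not shrink the automorphism group, while the fact that $\M$ is a reduct of $\M^*$ ensures it does not grow it either.
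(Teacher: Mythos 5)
Your proof is correct. The paper states this lemma without proof, calling it standard, and the argument you give is precisely the standard one it has in mind: the new predicates $P_\Theta$ are interpreted as $\Aut(\M)$-orbits and hence automatically preserved by every $\M$-automorphism, giving $\Aut(\M)\subseteq\Aut(\M^*)$, while the reverse inclusion is immediate because $\M$ is a reduct of $\M^*$.
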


\begin{lem}\label{smoothapproxqe}
$\Th(\M^*)$ has quantifier elimination; in particular, any $\La^*$-formula is equivalent in $\Th(\M^*)$ to a quantifier-free $(\La^*\setminus\La)$-formula.
\end{lem}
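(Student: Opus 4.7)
The plan is to deduce quantifier elimination in $\Th(\M^*)$ from $\aleph_0$-categoricity, using the fact that each $\Aut(\M^*)$-orbit is named by a single atomic $(\La^*\setminus\La)$-formula. Since in an $\aleph_0$-categorical theory the complete types over $\emptyset$ correspond to these orbits, every complete type will be isolated by an atomic formula, from which QE and the ``in particular'' clause follow at once.

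I would begin by verifying that $\M^*$ is itself $\aleph_0$-categorical. By \Cref{sameauto} we have $\Aut(\M^*)=\Aut(\M)$, so the two groups share the same orbits on $M^n$ for every $n$. Since $\M$ is $\aleph_0$-categorical, the Ryll-Nardzewski Theorem tells us $\Aut(\M)$ is oligomorphic; hence $\Aut(\M^*)$ is oligomorphic too, and Ryll-Nardzewski applied back in the other direction gives that $\Th(\M^*)$ is $\aleph_0$-categorical, with the complete $n$-types over $\emptyset$ in bijection with the $\Aut(\M)$-orbits on $M^n$.

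For each such orbit $\Theta\subseteq M^n$ the predicate $P_\Theta\in\La^*\setminus\La$ is, by \Cref{defncanonlang}, interpreted in $\M^*$ as $\Theta$ itself, so the atomic $\La^*$-formula $P_\Theta(\mybar{x})$ isolates the complete $n$-type realised by any $\mybar{a}\in\Theta$. Now let $\vphi(\mybar{x})$ be any $\La^*$-formula; its solution set in $\M^*$ is $\Aut(\M^*)$-invariant and hence a finite union of orbits $\Theta_1,\ldots,\Theta_r$, giving $\M^*\models\vphi(\mybar{x})\leftrightarrow\bigvee_{i=1}^{r}P_{\Theta_i}(\mybar{x})$. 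Because each $P_{\Theta_i}(\mybar{x})$ isolates a complete type of $\Th(\M^*)$, this equivalence is in fact a consequence of $\Th(\M^*)$, simultaneously yielding QE and exhibiting an equivalent quantifier-free formula in $\La^*\setminus\La$, as required. The main step requiring care is this last one---lifting the equivalence from ``truth in $\M^*$'' to ``consequence of $\Th(\M^*)$''---but it is routine, relying only on the standard fact that in an $\aleph_0$-categorical theory every complete type is principal and hence realised (together with its complementary types) across every model of the theory.
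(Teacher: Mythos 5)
Your proof is correct and is exactly the standard argument; the paper itself gives no proof, stating \Cref{smoothapproxqe} (together with \Cref{sameauto}) as a known fact. The chain of reasoning---$\aleph_0$-categoricity of $\M^*$ via $\Aut(\M^*)=\Aut(\M)$ and Ryll--Nardzewski, each complete $n$-type isolated by the atomic formula $P_\Theta(\mybar{x})$, every parameter-free definable subset of $M^n$ being $\Aut(\M^*)$-invariant and hence a finite union of orbits---is the usual route to QE in the canonical language.

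One minor remark: your final paragraph's worry about lifting the equivalence from ``truth in $\M^*$'' to ``consequence of $\Th(\M^*)$'' is unnecessary. The formula $\forall\mybar{x}\,(\vphi(\mybar{x})\leftrightarrow\bigvee_{i=1}^{r}P_{\Theta_i}(\mybar{x}))$ is an $\La^*$-sentence true in $\M^*$, so it lies in $\Th(\M^*)$ by definition; no appeal to principal types is needed for this step.

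One more substantive remark, not a flaw in your proof but a mismatch with the paper worth flagging: you (correctly) treat $P_\Theta$ as an $n$-ary predicate when $\Theta\subseteq M^n$, whereas \Cref{defncanonlang} describes each $P_\Theta$ as \emph{unary} and sets the interpretation in ${\M_i}^*$ to be $\Theta\cap M_i$, both of which suggest $\Theta\subseteq M$. Read literally, \Cref{smoothapproxqe} would then be false: if $\M$ is a single equivalence relation with infinitely many infinite classes (a smoothly approximable structure with exactly one $\Aut(\M)$-orbit on $M$), then $\La^*\setminus\La$ consists of a single unary predicate true of everything, and $E(x,y)$ is plainly not equivalent to any quantifier-free $(\La^*\setminus\La)$-formula. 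The conclusion of \Cref{smoothapproxqe} therefore forces the reading you have tacitly adopted---one predicate of arity $n$ for each orbit on $M^n$---and ``unary'' in \Cref{defncanonlang} must be an oversight; it would be worth saying so explicitly rather than silently correcting it.
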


\begin{lem}\label{stillsmoothapprox}
 $\M^*$ is smoothly approximated by $({\M_i}^*)_{i<\omega}$.
\end{lem}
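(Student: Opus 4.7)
The plan is to verify the three conditions in \Cref{defnsmoothapprox} for the expanded structures, noting that most of the work has already been done by \Cref{sameauto}.

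First I would observe that $\La^*$ is a countable language: since $\M$ is $\aleph_0$-categorical, by the Ryll-Nardzewski Theorem $\Aut(\M)$ has only finitely many orbits on $M$ itself, so only finitely many new unary predicates $P_\Theta$ are added to $\La$. Next I would establish that $\M^*$ is $\aleph_0$-categorical. By \Cref{sameauto}, $\Aut(\M^*) = \Aut(\M)$, so $\Aut(\M^*)$ acts oligomorphically on $M^* = M$, and another application of Ryll-Nardzewski gives $\aleph_0$-categoricity of $\M^*$.

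Next I would check that each ${\M_i}^*$ is a substructure of $\M^*$. The $\La$-reduct part is immediate from $\M_i \substruc \M$, so only the behaviour of the new unary predicates needs verification: by definition $P_\Theta^{{\M_i}^*} = \Theta \cap M_i = P_\Theta^{\M^*} \cap M_i$, which is exactly the substructure condition for a unary predicate. Finiteness of ${\M_i}^*$ is inherited from $\M_i$, and the containment chain $M_i^* \subset M_{i+1}^*$ together with $\bigcup_{i<\omega} M_i^* = M^*$ is immediate because the expansion does not change underlying sets.

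The only remaining point is that each ${\M_i}^*$ is a \emph{homogeneous} substructure of $\M^*$. Fix $k \in \Npos$ and $\mybar{a}, \mybar{b} \in {M_i}^k$. Using \Cref{sameauto} we have both $\Aut(\M^*) = \Aut(\M)$ and $\Aut_{\{M_i\}}(\M^*) = \Aut_{\{M_i\}}(\M)$ (an automorphism of $\M$ fixes the $P_\Theta$ setwise automatically, and stabilising $M_i$ setwise is a condition on the underlying set). Hence $\mybar{a}$ and $\mybar{b}$ lie in the same $\Aut(\M^*)$-orbit iff they lie in the same $\Aut(\M)$-orbit iff (by $\M_i \homsub \M$) they lie in the same $\Aut_{\{M_i\}}(\M)$-orbit iff they lie in the same $\Aut_{\{M_i\}}(\M^*)$-orbit, which is precisely $\M_i^* \homsub \M^*$.

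There is no real obstacle here — the whole argument is driven by the equality $\Aut(\M) = \Aut(\M^*)$ provided by \Cref{sameauto}, and the definability of the new predicates ensures that passing to finite homogeneous substructures is compatible with the expansion.
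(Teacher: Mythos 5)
Your proof is correct and follows essentially the same route as the paper: establish $\aleph_0$-categoricity of $\M^*$ from \Cref{sameauto} plus the Ryll-Nardzewski Theorem, note that each ${\M_i}^*$ is a finite $\La^*$-substructure, and then transfer the homogeneous-substructure condition using $\Aut(\M^*)=\Aut(\M)$ (and its consequence $\Aut_{\{M_i\}}(\M^*)=\Aut_{\{M_i\}}(\M)$). Your phrasing of the last step as a single chain of biconditionals, rather than arguing the two directions separately as the paper does, is only a cosmetic difference; the extra remarks about countability of $\La^*$ and preservation of the chain $M_i\subset M_{i+1}$ and union $\bigcup_i M_i = M$ are correct but are details the paper leaves implicit.
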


\begin{proof}
Since $\M$ is $\aleph_0$-categorical, by \Cref{sameauto} and the Ryll-Nardzewski Theorem, $\M^*$ is also $\aleph_0$-categorical. Also note that each ${\M_i}^*$ is a finite $\La^*$-substructure of $\M^*$. It remains to show that ${\M_i}^*\homsub\M^*$. If $\mybar{a},\mybar{b}\in{\M_i}^*$ lie in the same $\Aut(\M^*)_{\{M_i\}}$-orbit, then $\mybar{a}$ and $\mybar{b}$ lie in the same $\Aut(\M^*)$-orbit, since $\Aut(\M^*)_{\{M_i\}}\subseteq\Aut(\M^*)$. Now suppose that $\mybar{a},\mybar{b}\in{\M_i}^*$ lie in the same $\Aut(\M^*)$-orbit. By \Cref{sameauto}, $\mybar{a}$ and $\mybar{b}$ lie in the same $\Aut(\M)$-orbit. Thus, since $\M_i\homsub\M$, there exists $\sigma\in\Aut(\M)_{\{M_i\}}$ such that $\sigma(\mybar{a})=\mybar{b}$. But $\sigma\in\Aut(\M^*)_{\{M_i\}}$, again by \Cref{sameauto}, and so $\mybar{a}$ and $\mybar{b}$ lie in the same $\Aut(\M^*)_{\{M_i\}}$-orbit.
\end{proof}

\begin{lem}\label{eventuallyiff}
Let $\chi(\mybar{y})$ be an $\La$-formula with $m:=l(\mybar{y})$. Then there exists $Q\in\Na$ such that if $i>Q$ and $\mybar{c}\in {M_i}^m$, then
\[
\M\models\chi(\mybar{c})\iff\M_i\models\chi(\mybar{c}).
\]
\end{lem}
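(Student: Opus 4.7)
The plan is to induct on the complexity of $\chi$. For atomic $\chi$, absoluteness is immediate since each $\M_i$ is an $\La$-substructure of $\M$, so $Q=0$ suffices, and the Boolean connectives are handled by taking the maximum of the inductive bounds. The only real work is in the existential case $\chi(\mybar{y})=\exists x\,\psi(x,\mybar{y})$: the direction $\M_i\models\chi(\mybar{c})\Rightarrow\M\models\chi(\mybar{c})$ is a routine application of the inductive bound for $\psi$, while the converse requires producing a witness inside $M_i$ for something that is witnessed only in the ambient structure $\M$.

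To make the converse work, I would combine the Ryll-Nardzewski Theorem (applied to the $\aleph_0$-categorical structure $\M$) with the homogeneity hypothesis $\M_i\homsub\M$. By Ryll-Nardzewski there are only finitely many complete $(m+1)$-types of $\M$ over $\0$, say $q_1,\dots,q_s$; fix once and for all a representative pair $(a_k,\mybar{c}_k)\in M^{m+1}$ realising each $q_k$. Because $M=\bigcup_{i<\omega}M_i$ is an increasing union and there are only finitely many representatives, there is $Q_1\in\Na$ such that every such representative already lies in $M_i^{m+1}$ whenever $i>Q_1$. Set $Q:=\max(Q',Q_1)$, where $Q'$ is the bound furnished by the inductive hypothesis for $\psi$.

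Now given $\mybar{c}\in M_i^m$ with $i>Q$ and $\M\models\chi(\mybar{c})$, pick any $a\in M$ with $\M\models\psi(a,\mybar{c})$ and let $q_k:=\tp^\M(a,\mybar{c})$. The chosen representative $(a_k,\mybar{c}_k)$ also realises $q_k$, so $\mybar{c}$ and $\mybar{c}_k$ lie in the same $\Aut(\M)$-orbit and both belong to $M_i$. Homogeneity $\M_i\homsub\M$ therefore yields $\sigma\in\Aut_{\{M_i\}}(\M)$ with $\sigma(\mybar{c}_k)=\mybar{c}$; then $\sigma(a_k)\in M_i$ and $(\sigma(a_k),\mybar{c})$ still realises $q_k$, so in particular $\M\models\psi(\sigma(a_k),\mybar{c})$. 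The inductive hypothesis for $\psi$ now transfers this to $\M_i\models\psi(\sigma(a_k),\mybar{c})$, whence $\M_i\models\chi(\mybar{c})$ as required.

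The main obstacle is precisely this witness-transfer step in the existential case: a witness living in $M\setminus M_i$ must be replaced by one inside $M_i$. What makes the induction close is the interplay of oligomorphism, which reduces the problem to finitely many pre-chosen representatives that are eventually absorbed by the increasing union, and the homogeneity of $\M_i$ in $\M$, which supplies an automorphism fixing $M_i$ setwise and carrying a representative tuple to $\mybar{c}$.
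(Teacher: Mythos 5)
Your proof is correct, but it takes a genuinely different route from the paper. The paper passes to the canonical language $\La^*$ (\Cref{defncanonlang}), invokes quantifier elimination for $\Th(\M^*)$ (\Cref{smoothapproxqe}) to replace $\chi$ by a quantifier-free $\La^*$-formula $\delta$, then uses compactness together with the $\forall\exists$-axiomatisation of $\Th(\M^*)$ (citing the proof of Proposition 5.4 in \cite{klm}) to find a sentence $\tau$ that eventually holds in the $\M_i^*$ and forces $\chi\leftrightarrow\delta$, finally exploiting absoluteness of quantifier-free formulas between a substructure and the ambient structure. You instead run a direct induction on the complexity of $\chi$, with the existential case being the only nontrivial step: you use Ryll-Nardzewski to fix finitely many representative realisations of the $(m+1)$-types, note that the increasing union $\bigcup_i M_i$ eventually absorbs all of them, and then use the defining property of $\M_i\homsub\M$ to transport a representative witness into $M_i$ via an automorphism that stabilises $M_i$ setwise. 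Your argument is more elementary and self-contained: it avoids the canonical-language machinery and the external citation to \cite{klm}, working directly from the two hypotheses (oligomorphism and homogeneous substructure) that drive the paper's proof from behind the scenes. The trade-off is that the paper's route, having established \Cref{smoothapproxqe,stillsmoothapprox}, disposes of all formulas uniformly in one compactness stroke, whereas yours must handle the quantifier step explicitly — but that explicitness is arguably clarifying here, since it isolates exactly where the homogeneity condition $\M_i\homsub\M$ is earning its keep.
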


\begin{proof}
Consider $\M^*$. By \Cref{smoothapproxqe}, $T:=\Th(\M^*)$ has quantifier elimination and thus there is a quantifier-free $\La^*$-formula $\delta(\mybar{y})$ such that $\forall\mybar{y}\,(\chi(\mybar{y})\leftrightarrow\delta(\mybar{y}))\in T$. Thus by compactness there is an $\La^*$-sentence $\tau\in T$ such that
\begin{equation}\label{tauimplies}
\tau\models\forall\mybar{y}\,(\chi(\mybar{y})\leftrightarrow\delta(\mybar{y})).
\end{equation}
By \Cref{stillsmoothapprox} and the $\forall\exists$-axiomatisation of $T$ (see the proof of Proposition 5.4 in \cite{klm}), there exists $Q\in\Na$ such that ${\M_i}^*\models\tau$ for all $i>Q$. Now, consider some arbitrary $\mybar{c}\in {M_i}^m$ with $i>Q$. Since $\delta$ is quantifier-free and ${\M_i}^*\substruc\M^*$,
\[
\M^*\models\delta(\mybar{c}) \iff {\M_i}^*\models\delta(\mybar{c}).
\]
Hence by \eqref{tauimplies} we have
\[
\M^*\models\chi(\mybar{c}) \iff {\M_i}^*\models\chi(\mybar{c})
\]
because $\M^*\models\tau$ and ${\M_i}^*\models\tau$. But $\chi$ is an $\La$-formula and thus
\[
\M\models\chi(\mybar{c}) \iff \M_i\models\chi(\mybar{c}),
\]
as required.
\end{proof}

The following example shows that the converse of \Cref{sapproxmecshort} does not hold, in the following sense: An ultraproduct of an $R$-mec need not be elementarily equivalent to a smoothly approximable structure.

\begin{exmpl}
The class of all finite abelian groups is an $R$-mec (\Cref{abelian-groups}) and thus the subclass $\C$ of all finite cyclic groups of prime order is also an $R$-mec. Let $\mathcal{U}$ be a non-principal ultraproduct of $\C$. Then by {\L}os's theorem $\mathcal{U}$ is torsion-free. So $\mathcal{U}$ has infinitely 2-types:\ consider pairs $(x,x^k)$ for $k \in \Na$. Thus by the Ryll-Nardzewski Theorem $\mathcal{U}$ cannot be elementarily equivalent to an $\aleph_0$-categorical structure. Therefore, since smoothly approximable structures are $\aleph_0$-categorical, $\mathcal{U}$ cannot be elementarily equivalent to a smoothly approximable structure.
\end{exmpl}


\section{Lie coordinatisation}
\label{section:liecoord}

The goal of this section is to use Lie coordinatisation to prove the main result \Cref{dugaldsconjecturefull}, as conjectured by Macpherson. As such, our account of Lie coordinatisation is streamlined for this purpose and we leave some important notions from \cite{cherhrush} by the wayside, most notably orientation and orthogonality. That being said, we make explicit a number of details that are only implicit in \cite{cherhrush}, especially in our proofs of \Cref{thmsix} and \Cref{5.2.2}. Our presentation is based primarily on \cite{cherhrush}, with input from \cite{chs}.

The history of Lie coordinatisation does not lend itself to easy synopsis and we give only a very brief summary; see \secref{1} of \cite{cher} and \S\secref{1.1--1.2} of \cite{cherhrush} for a more detailed picture. The notion was developed by Cherlin and Hrushovski as (inter alia) an attempt to find a structure theory for smoothly approximable structures, building on the work of Kantor, Liebeck and Macpherson in \cite{klm}. Deep links between other model-theoretic notions were discovered through their investigation (\secref{1.2} of \cite{cherhrush}). In particular, it was shown that Lie coordinatisability and smooth approximation are equivalent (Theorem 2 in \cite{cherhrush}). Note that the classification of finite simple groups plays a fundamental role, albeit in the background.

In contrast to its mathematical depth, Lie coordinatisation has made only a shallow footprint in the literature, in part due to the development of simple theories. There are significant mathematical links between the two topics (see pp.~8--10 of \cite{cherhrush}), but simple theories have received more attention from model theorists. The reasons for this are manifold and a subject for debate, but I present two subjective opinions: Firstly, simple theories are quite simply easier to work with. The definition of a simple theory via the tree property is direct and without prerequisites (other than the usual background infrastructure of modern model theory), but as the reader will soon discover, the definition of Lie coordinatisability does not lend itself to swift comprehension and requires considerable technical machinery. Secondly, simple theories perhaps provide more mathematical relevancy. Simplicity is an important demarcation line among unstable theories and its study has led to a deeper understanding of independence. Moreover, simplicity theory has provided new insight into important non-$\aleph_0$-categorical theories such as ACFA and pseudofinite fields. Note however that Lie coordinatisation and the work of Cherlin, Lachlan and Harrington had a lot of implicit influence on the development of simplicity theory; early versions of \cite{hrushovski93} and \cite{cherhrush} significantly predate \cite{kimpillay}. Also note the discussion of the independence theorem on p.~9 of \cite{cherhrush}. (I thank Dugald Macpherson and Sylvy Anscombe for sharing their thoughts on the topic of this paragraph.)

The first publication on Lie coordinatisation was the paper \cite{hrushovski93} by Hrushovski, in joint work with Cherlin. Some technical issues were found in this paper (see p.~7 of \cite{cherhrush}) and corrected results were published in \cite{cher}, which is essentially an abridgement of the main text \cite{cherhrush}. The paper \cite{chs} by Chowdhury, Hart and Sokolovi\'{c} makes significant contributions and Hrushovski has published some further work on quasifiniteness in \cite{hrushovskiND}. There are also some unpublished notes \cite{hillsmart} by Hill and Smart. Lie coordinatisation arises in the context of asymptotic classes in \cite{elwesphd}, \cite{elwes}, \cite{macstein1} and \cite{macstein2}.

We now outline the structure of this section. In \Cref{Liebasicssec} we go over the basic concepts of Lie coordinatisation and in \Cref{Lieexamplessec} we provide two examples of Lie coordinatisable structures. \Cref{envelopessec} develops the notion of an envelope, which is fundamental to the rest of the section. We then move on to \Cref{macconjshortsec}, where we state and sketch a proof of a result (\Cref{thmsix}) that allows us to apply \Cref{sapproxmecshort} to obtain a short version of Macpherson's conjecture (\Cref{dugaldsconjectureshort}). \Cref{5.2.2sec} then provides us with the extra information needed to prove the full version of the conjecture in \Cref{sectionmacphersonconjfull}.


\subsection{Lie geometries and Lie coordinatisation}
\label{Liebasicssec}

We state the definition of Lie coordinatisation. We need to go over a number of preliminaries first, starting with Lie geometries. We refer the reader to chapter 7 of \cite{asch} for the terminology and theory of vector spaces with forms.

\begin{defn}[Linear Lie geometry, Definition 2.1.4 in \cite{cherhrush}]\label{liegeos} Let $K$ be a finite field. A \emph{linear Lie geometry} over $K$ is one of the following six kinds\,\footnote{In contrast to Definition 2.1.4 in \cite{cherhrush}, we use the word `kind' in order to avoid overuse of the word `type'.} of structures:
\begin{thmlist}[1.]
\item\emph{A degenerate space.} An infinite set in the language of equality.

\item\emph{A pure vector space.} An infinite-dimensional vector space $V$ over $K$ with no further structure.

\item\emph{A polar space.} Two infinite-dimensional vector spaces $V$ and $W$ over $K$ with a non-degenerate bilinear form $V\times W\to K$.

\item\emph{A symplectic space.} An infinite-dimensional vector space $V$ over $K$ with a symplectic bilinear form $V\times V\to K$.

\item\emph{A unitary space.} An infinite-dimensional vector space $V$ over $K$ with a unitary sesquilinear form $V\times V\to K$.

\item\emph{An orthogonal space.} An infinite-dimensional vector space $V$ over $K$ with a quadratic form $V\to K$ whose associated bilinear form is non-degenerate.
\end{thmlist}
\end{defn}

\begin{rem}We comment on \Cref{liegeos}.
\begin{thmlist}
\item We consider linear Lie geometries as two-sorted structures $(V,K)$, with a sort $V$ in the language of groups with an abelian group structure, a sort $K$ in the language of rings with a field structure, and a function $K\times V\to V$ for scalar multiplication. We call $V$ the \emph{vector sort} and $K$ the \emph{field sort}. (See pp.~5 and 12 of \cite{tentziegler} for a summary of multi-sorted structures and languages.) The elements of $K$ are named by constant symbols.\footnote{Note that this is what the prefix `basic' refers to in Definition 2.1.6 in \cite{cherhrush}. Since we always name the field elements by constant symbols, we suppress this prefix.} In the polar case, the vector sort is $V\cup W$ in the language of groups equipped with an equivalence relation with precisely two classes $V$ and $W$, each with an abelian group structure.
\item\label{ignorequad} We have ignored quadratic Lie geometries (Definition 2.1.4 in \cite{cherhrush}), as we do not need to consider them, save only to rule them out in the proof of \Cref{5.2.2}. They arise from the fact that in characteristic 2 every symplectic bilinear form has many associated quadratic forms.
\end{thmlist}
\end{rem}

\begin{lem}[Lemmas 2.2.8 and 2.3.19 in \cite{cherhrush}]\label{geoqe}
Every linear Lie geometry has quantifier elimination and is $\aleph_0$-categorical.
\end{lem}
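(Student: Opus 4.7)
The plan is to handle each of the six kinds of linear Lie geometry listed in \Cref{liegeos} by a common strategy: for each kind I would establish (a) that $\Aut(V,K)$ acts with only finitely many orbits on $V^n$ for every $n \in \Npos$, and (b) that any isomorphism between finitely generated substructures of the geometry extends to a global automorphism. Since $\Aut$-orbits refine complete types, (a) gives finitely many $n$-types and hence, via the Ryll-Nardzewski theorem, $\aleph_0$-categoricity. Statement (b), i.e.\ ultrahomogeneity, together with the resulting $\aleph_0$-categoricity, forces quantifier-free $n$-types to coincide with complete $n$-types and yields quantifier elimination. Because the field sort $K$ is finite and each of its elements is named by a constant, it contributes nothing beyond the vector sort to either count.

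I would first dispatch the two easy cases. The degenerate case is the standard example of an infinite set in the language of pure equality: any bijection between finite subsets extends, and the $\Aut$-orbit of a tuple in $V^n$ is determined by the equivalence relation on $\{1,\ldots,n\}$ recording equal coordinates. For the pure vector space over finite $K$, any linear injection defined on a finite-dimensional subspace extends to an automorphism of $V$ by basis extension using that the complement is infinite-dimensional, and the orbit of $(v_1,\ldots,v_n)$ is determined by the kernel of the induced map $K^n \to V$, which is one of finitely many subspaces of $K^n$.

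The four form-based cases (polar, symplectic, unitary, orthogonal) are the substantive part. The key input here is the appropriate Witt extension theorem, which asserts that any isometry between finite-dimensional non-degenerate subspaces of $V$ (or of $V \oplus W$, preserving the pairing, in the polar case) extends to a global isometry. Finitely generated substructures are finite-dimensional, and one reduces the extension problem to Witt by splitting off the radical of the induced form. The orbit of an $n$-tuple is then parameterised by the pattern of linear dependencies together with the Gram matrix of the form on the tuple (respectively, the pairing matrix in the polar case, or the additional values of the quadratic form in the orthogonal case); over the finite field $K$ this is finite data, giving finitely many orbits on $V^n$. The main obstacle will be the careful and uniform invocation of Witt across all four form-based cases, and in particular the orthogonal case in characteristic $2$; but since we have excluded quadratic Lie geometries by requiring the associated bilinear form to be non-degenerate (see \Cref{ignorequad}), the pathological behaviour of quadratic forms in characteristic $2$ is avoided and the classical Witt theorem applies uniformly. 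Alternatively, one could simply quote Lemmas 2.2.8 and 2.3.19 of \cite{cherhrush}, as the statement itself already does.
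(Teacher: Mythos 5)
The paper itself does not prove this lemma but simply quotes Lemmas 2.2.8 and 2.3.19 of \cite{cherhrush}, a fact you already acknowledge at the end; your reconstruction is therefore necessarily a different route, and the architecture you chose is the standard one and is sound: finitely many $\Aut$-orbits on $n$-tuples follow because the dependency pattern plus the Gram/pairing (and $Q$-value) data over the finite field $K$ take only finitely many values, ultrahomogeneity follows from a Witt-type extension theorem after splitting off the radical of a finitely generated subspace, and Ryll--Nardzewski ties the two together.

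The one claim I would push back on is your treatment of the orthogonal case in characteristic~$2$. Requiring the associated bilinear form to be non-degenerate does \emph{not} ``exclude quadratic Lie geometries'': the quadratic Lie geometries of \Cref{ignorequad} are a distinct kind of structure --- roughly, a symplectic space together with the affine space of \emph{all} quadratic forms having that symplectic form as their polarisation --- and they are simply absent from the list of six kinds in \Cref{liegeos}, not ruled out by the non-degeneracy clause. Nor does non-degeneracy dodge the characteristic-$2$ phenomena: in characteristic~$2$ the polarisation of any quadratic form is alternating, so an orthogonal Lie geometry there is precisely a non-degenerate symplectic space carrying a distinguished compatible quadratic form, which is exactly where the Arf invariant and Dickson's theory enter. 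The ``classical Witt theorem'' for symmetric bilinear forms is not what you need there; what you need --- and what does hold in all characteristics, see for instance chapter~7 and \secref 21 of \cite{asch} --- is Witt's extension theorem for \emph{quadratic} spaces. So the proof goes through, but the characteristic-$2$ step should invoke the quadratic-space version of Witt rather than assert that the pathology has been avoided.
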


\begin{defn}[Projective Lie geometry, Definition 2.1.7 in \cite{cherhrush}]\label{projgeo}
Let $L$ be a linear Lie geometry and let $\acl$ denote the usual model-theoretic algebraic closure in $L$. We define an equivalence relation $\sim$ on $L\setminus\acl(\0)$ as follows:
\[
a\sim b \text{ iff } \acl(a)=\acl(b)
\]
The \emph{projectivisation} of $L$ is defined to be the quotient structure arising from this equivalence relation:
\[
\left. ^{\textstyle L\setminus\acl(\0)} \middle/ _{\textstyle\sim} \right.
\]
A \emph{projective Lie geometry} is a structure that is the projectivisation of some linear Lie geometry.
\end{defn}

\begin{rem}[comment after Definition 2.1.7 in \cite{cherhrush}]\label{linearspan}
By quantifier elimination (\Cref{geoqe}), algebraic closure is just linear span and so a projective Lie geometry is a projective geometry in the usual sense.
\end{rem}

\begin{defn}[Affine Lie geometry, Definition 2.1.8 in \cite{cherhrush}]
An \emph{affine Lie geometry} is a structure of the form $(V,A,\oplus,-)$, where $V$ is the vector sort of a linear Lie geometry (but not a degenerate space), $A$ is a set, $\oplus\colon V\times A\to A$ is a regular group action and $-\colon A\times A\to V$ is such that $a=v\oplus b$ implies $a-b=v$. Here `regular' means that for every $a,b\in A$ there exists a unique $v\in V$ such that $a=v\oplus b$. In the polar case the structure is $(V,W,A,\oplus,-)$, where $\oplus\colon V\times A\to A$ is a regular group action and $-\colon A\times A\to V$ is such that $a=v\oplus b$ implies $a-b=v$.
\end{defn}

\begin{defn}[Lie geometry]
Linear, projective and affine Lie geometries are referred to collectively as \emph{Lie geometries}.
\end{defn}

The notions of canonical and stable embeddedness are fundamental to Lie coordinatisation:

\begin{defn}[Embedded structures, Definition 2.1.9 in \cite{cherhrush}]
Consider an $\La$-structure $\N$ and an $\La'$-structure $\M$ such that the underlying set $M$ is an $\La_N$-definable subset of $N$. Let $c\in\N\eq$ be a canonical parameter for $M$. (See \secref{8.2} of \cite{marker} or \secref{8.4} of \cite{tentziegler} for an introduction to canonical parameters.)
\begin{thmlist}
\item $\M$ is \emph{canonically embedded} in $\N$ if the $\La'_\0$-definable relations of $\M$ are precisely the $\La_c$-definable relations on $\M$; that is, for every $n\in\Na^+$, a subset $D\subseteq M^n$ is $\La'_\0$-definable in the structure $\M$ if and only if it is $\La_c$-definable in the structure $\N$. (The notation $\La'_\0$ isn't strictly necessary, since $\La'=\La'_\0$, but the subscript $\0$ is added to emphasise $\0$-definability.)

\item $\M$ is \emph{stably embedded} in $\N$ if every $\La_N$-definable relation on $\M$ is $\La_M$-definable in a uniform way; that is, for every $\La$-formula $\vphi(\mybar{x},\mybar{y})$, where $n:=l(\mybar{x}) \geq 1$ and $m:=l(\mybar{y})$, if $\vphi(\N^n,\mybar{a})\subseteq M^n$ for every $\mybar{a}\in N^m$, then there exists an $\La$-formula $\vphi'(\mybar{x},\mybar{z})$, where $r:=l(\mybar{z})$, such that for every $\mybar{a}\in N^m$ there exists $\mybar{a}'\in M^r$ such that $\vphi(\N^n,\mybar{a})=\vphi'(\N^n,\mybar{a}')$. (Note that we need not have $m=r$.)

\item $\M$ is \emph{fully embedded} in $\N$ if $\M$ is both canonically and stably embedded in $\N$.
\end{thmlist}
\end{defn}

Intuitively, $\M$ is fully embedded in $\N$ if $\N$ cannot place any additional structure on $\M$.

We won't need the following definition until \Cref{5.2.2sec}, but it follows on from the previous definitions.

\begin{defn}[Localisation, Definition 2.4.9 in \cite{cherhrush}]\label{localisation}
Let $P$ be a projective Lie geometry, arising from a linear Lie geometry $L$. Suppose that $P$ is fully embedded in an $\La$-structure $\M$. The \emph{localisation} of $P$ over a finite set $A\subset M$ is defined as follows: Let $f$ be the bilinear/sesquilinear form on $L$, where for a degenerate space or a pure vector space we define $f(v,w):=0$ for all $v,w\in L$ and for an orthogonal space $f$ is the bilinear form associated to the quadratic form on $L$. Define
\[
L_A^\perp:=\{v\in L:\text{$f(v,w)=0$ for all $w\in \acl(A)\cap L$}\}
\]
or, in the polar case,
\begin{align*}
L_A^\perp:=\{v\in V:&\,\text{$f(v,w)=0$ for all $w\in \acl(A)\cap W$}\} \\
&\cup\{v\in W:\text{$f(v,w)=0$ for all $w\in \acl(A)\cap V$}\}.
\end{align*}
Let $L_A^\perp /(L_A^\perp\cap\acl(A))$ be the quotient space, in the usual sense of a quotient of abelian groups. (This makes sense by \Cref{linearspan}.) Then the localisation of $P$ over $A$ is defined to be the projectivisation of $L_A^\perp /(L_A^\perp\cap\acl(A))$; that is, let $\sim$ be as in \Cref{projgeo} and then quotient $L_A^\perp /(L_A^\perp\cap\acl(A))$ by $\sim$.

We denote the localisation of $P$ over $A$ by $P/A$.
\end{defn} 

We are now ready to state the definition of Lie coordinatisation itself:

\begin{defn}[Lie coordinatisation, Definition 2.1.10 in \cite{cherhrush}]\label{Lie-isation-defn}
Let $\M$ be an $\La$-structure. A \emph{Lie coordinatisation} of $\M$ is an $\La_{\0}$-definable partial order $<$ of $M$ that forms a tree of finite height with an $\La_{\0}$-definable root $w$ such that the following condition holds: For every $a\in M\setminus\{w\}$ either the immediate predecessor $u$ of $a$ has only finitely many immediate successors (which implies $a\in\acl(u)$) or, if $a\not\in\acl(u)$, then there exist $b<a$ and an $\La_b$-definable projective Lie geometry $J$ fully embedded in $\M$ such that either
\begin{thmlist}
\item $a\in J$ or, if $a\not\in J$, then

\item there exist $c\in M$ with $b<c<a$ and an $\La_c$-definable affine Lie geometry $(V,A)$ fully embedded in $\M$ such that $a\in A$, the projectivisation of $V$ is $J$, and $J<V<A$,
\end{thmlist}
where for subsets $X,Y\subset M$ the notation $X<Y$ means that every element of $X$ lies in a lower level of the tree than every element of $Y$. We say that the Lie geometries $J$ and $(V,A)$ in the tree \emph{coordinatise} $\M$ and refer to them as \emph{coordinatising Lie geometries} (or just \emph{coordinatising geometries}). By a \emph{Lie coordinatised structure} we mean a structure equipped with a Lie coordinatisation.
\end{defn}

\begin{defn}[Lie coordinatisability, Definition 2.1.12 in \cite{cherhrush}]\label{Lie-isable-defn}
An $\La$-structure $\M$ is \emph{Lie coordinatisable} if it is $\0$-bi-interpretable (see \secref{2.5} of \cite{amsw} or \secref{2.4} of \cite{wolfphd}) with a Lie coordinatised structure that has finitely many $1$-types over $\0$.
\end{defn}

\begin{rem}\label{weakLie} We have actually defined so-called `weak Lie coordinatisability' (p.~17 of \cite{cherhrush}), since in \Cref{Lie-isation-defn} we did not stipulate the orientation condition relating to quadratic coordinatising geometries (Definition 2.1.10 in \cite{cherhrush}). This condition is important and cannot be ignored in general, but we can ignore it because we do not need to consider quadratic Lie geometries (\Cref{ignorequad}). For brevity we thus suppress the prefix `weak', the proof of \Cref{thmsix} being an exception. Note that the orientation condition is also ignored in \cite{chs} for the same reason (p. 517 of \cite{chs}).
\end{rem}

\begin{rem}\label{addingsorts}
In general it is important to maintain the distinction between Lie coordinatisation and Lie coordinatisability, but we freely move from the latter to the former by adding finitely many sorts from $\M\eq$ to $\M$.
\end{rem}

We quote two important results from \cite{cherhrush}:

\begin{lem}[Lemma 2.3.19 in \cite{cherhrush}]\label{liecat}
If $\M$ is Lie coordinatisable, then $\M$ is $\aleph_0$-categorical.
\end{lem}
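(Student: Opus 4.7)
The plan is to combine the Ryll-Nardzewski Theorem with an induction along the coordinatising tree. First I would reduce to the case where $\M$ is itself Lie coordinatised (rather than merely Lie coordinatisable) with only finitely many $1$-types over $\0$: since $\0$-bi-interpretability preserves the topological automorphism group up to isomorphism, and hence oligomorphicity, the Ryll-Nardzewski characterisation of $\aleph_0$-categoricity transfers through bi-interpretations. So by \Cref{Lie-isable-defn} it suffices to prove the result under the extra assumption that $\M$ carries a Lie coordinatisation and has finitely many $1$-types over $\0$.

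Applying Ryll-Nardzewski again, it is enough to show that $\Aut(\M)$ has only finitely many orbits on $M^n$ for every $n\in\Npos$. Let $<$ denote the coordinatising tree order on $M$ and let $h$ be its (finite) height. For any tuple $\bar a\in M^n$, let $T(\bar a)$ denote its closure under $<$-predecessors; this is finite, of size at most $nh$. Since the orbit of $\bar a$ is determined by the orbit of an enumeration of $T(\bar a)$ together with the positions of the $a_i$ inside $T(\bar a)$, it suffices to bound the number of $\Aut(\M)$-orbits on $<$-closed finite tuples, and I would carry this out by induction on the size $k$ of the $<$-closed tuple.

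The base case $k=1$ amounts to the (unique, $\0$-definable) root $w$, so the finitely-many-$1$-types hypothesis gives the bound immediately. For the inductive step, suppose $\bar a$ enumerates a $<$-closed set of size $k$ and consider an immediate $<$-successor $a'$ of some $a_i\in\bar a$. By \Cref{Lie-isation-defn} we are in one of three situations: either $a_i$ has finitely many immediate successors, so $a'\in\acl(a_i)$, giving only finitely many choices; or $a'$ lies in a fully embedded projective Lie geometry $J$ that is $\La_b$-definable for some $b\in\bar a$; or $a'$ lies in a fully embedded affine Lie geometry $(V,A)$ that is $\La_c$-definable for some $c\in\bar a$. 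In the two geometric cases \Cref{geoqe}, combined with \Cref{linearspan} and the interpretations built into \Cref{projgeo}, yields $\aleph_0$-categoricity of the coordinatising geometry, so there are only finitely many complete $1$-types of $a'$ over the finite set $\bar a$ internal to the geometry.

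The main obstacle I anticipate is the passage from \emph{finitely many $1$-types of $a'$ over $\bar a$ internal to the coordinatising geometry} to \emph{finitely many $\Aut(\M)_{\bar a}$-orbits of $a'$}. This requires showing that two elements of the same coordinatising geometry realising the same geometric type over $\bar a$ can be swapped by an automorphism of $\M$ fixing $\bar a$, which amounts to lifting a partial isomorphism of the geometry to a global automorphism. This is exactly what full embeddedness, together with a back-and-forth argument along the tree, is designed to achieve: one extends the partial automorphism one tree-level at a time, using full embeddedness to transport structure within each coordinatising geometry and using the algebraic case together with finite tree height to terminate. Once this lifting is in place, assembling the finite bounds level by level along the at most $h$ levels of the tree gives the required finite orbit bound on $<$-closed tuples of size $k$ and completes the induction.
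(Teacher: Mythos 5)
The paper does not prove this lemma; it is quoted from Cherlin--Hrushovski (their Lemma 2.3.19), introduced by ``We quote two important results from \cite{cherhrush}.'' So there is no in-paper proof to compare against, and your proposal must stand on its own.

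Your overall strategy --- Ryll--Nardzewski, reduce to $<$-closed tuples, induct along the tree, invoke $\aleph_0$-categoricity of the coordinatising Lie geometries --- has the right shape and is indeed the route taken in \cite{cherhrush}. But the step you yourself flag as ``the main obstacle'' is precisely where the real mathematical content lives, and the proposal does not discharge it. Asserting that ``full embeddedness, together with a back-and-forth argument along the tree, is designed to achieve'' the passage from equal geometric type over $\bar a$ to a global automorphism fixing $\bar a$ overstates what full embeddedness gives. Canonical and stable embeddedness say that $\M$ imposes no extra $\0$-definable structure on $J$ and that $\M$-definable relations on $J$ are $J$-definable; they do not by themselves produce an automorphism of all of $\M$ extending a partial isomorphism of $J$ over $\bar a$. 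Turning your invariant (same tree shape plus level-by-level geometric types) into a back-and-forth system requires (i) showing that the type of $a'$ over all of $\bar a$ is controlled by its type over boundedly many $J$-parameters definable from $\bar a$, via stable embeddedness and a localisation argument; and (ii) the extension property: a realisation of the transported type must exist on the other side \emph{and} adding it must preserve the equivalence at every higher level, which needs the non-degeneracy and orthogonality analysis of coordinatising geometries. These are exactly the intermediate lemmas of \secref{2.3} of \cite{cherhrush}; without them you have the right invariant but no proof that it has finitely many classes per size, nor that its classes coincide with $\Aut(\M)$-orbits, so the induction does not close. A secondary point: the hypothesis of finitely many $1$-types over $\0$ is doing no real work at the base case $k=1$ (a $<$-closed singleton is forced to be the root); its actual role is to bound the number of non-isomorphic coordinatising geometries that can be attached at a given node, which is needed for the orbit count at the very first level, and the sketch should make that use explicit.
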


\begin{thm}[Theorem 2 in \cite{cherhrush}]\label{smoothLieequiv}
Let $\M$ be an $\La$-structure. Then $\M$ is Lie coordinatisable if and only if $\M$ is smoothly approximable.
\end{thm}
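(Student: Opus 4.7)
The plan is to prove the two directions separately, the forward implication (Lie coordinatisable $\Rightarrow$ smoothly approximable) being constructive while the reverse direction carries essentially all of the depth.

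For the Lie $\Rightarrow$ smooth direction, I would exhibit the smoothly approximating sequence via \emph{envelopes} (as foreshadowed by \Cref{envelopessec}). Given a Lie coordinatisation of $\M$ with finite-height tree and coordinatising projective/affine Lie geometries $(J_\alpha)_\alpha$, for each function $\mybar{d}$ assigning a dimension $d_\alpha$ to each $\alpha$ I would define an envelope $\E_{\mybar{d}} \subseteq M$ as follows: include the root; at each subsequent level, for every coordinatising geometry $J_\alpha$ based at an element already chosen, adjoin a subset of $J_\alpha$ corresponding to a subspace (or affine flat) of dimension $d_\alpha$; at each branch point with only finitely many immediate successors, adjoin all of them. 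Three facts then need to be checked: (i)~each $\E_{\mybar{d}}$ is finite, by induction on tree height using finiteness of the field sort and of each $d_\alpha$; (ii)~the envelopes form an increasing chain whose union is $M$; and (iii)~each $\E_{\mybar{d}}$ is a homogeneous substructure of $\M$. Point~(iii) is the real content: given tuples $\mybar{a}, \mybar{b} \in \E_{\mybar{d}}$ in the same $\Aut(\M)$-orbit, one constructs an automorphism sending $\mybar{a}$ to $\mybar{b}$ that setwise preserves $\E_{\mybar{d}}$, proceeding by induction on tree height and invoking Witt-style transitivity of the classical groups on isometric finite-dimensional subspaces. That $\M$ itself is $\aleph_0$-categorical is given by \Cref{liecat}.

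For the smooth $\Rightarrow$ Lie direction, assume $\M$ is $\aleph_0$-categorical and smoothly approximated by $(\M_i)_{i<\omega}$. The plan is to build the coordinatising tree recursively in $\M\eq$: starting from a $\0$-definable root, at each stage locate a $\0$-definable sort on which $\Aut(\M)$ acts primitively. The Ryll-Nardzewski Theorem bounds the number of $k$-types in $\M$ for each $k$, and the condition $\M_i \homsub \M$ transfers that bound uniformly down to each finite $\M_i$. The key recognition step is then to appeal to the Kantor--Liebeck--Macpherson analysis in \cite{klm}, which uses the Classification of Finite Simple Groups to show that any such primitive oligomorphic action, modulo passing to a quotient or finite cover, is the action on one of the linear, polar, symplectic, unitary, or orthogonal Lie geometries of \Cref{liegeos}. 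Full embeddedness of each identified geometry would be verified by reading definable relations off the finite substructures $\M_i$ via their homogeneity and the Ryll-Nardzewski orbit count. The tree is extended above such a $J$ by working in the localisation $J/A$ of \Cref{localisation} and iterating; finite tree height would follow from an oligomorphic rank bound.

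The main obstacle is unquestionably the identification of the Lie geometries in the second direction: this is where the Classification of Finite Simple Groups enters essentially, and it is also where examples such as the random graph and random tournament (\Cref{randomgraphnonexmpl,randomtournamentnonexmpl}) are excluded from arising as primitive building blocks. A subsidiary difficulty is bookkeeping the passage between $\M$, $\M\eq$, and the bi-interpretable Lie coordinatised structure required by \Cref{Lie-isable-defn}, as well as handling the orientation condition suppressed by \Cref{weakLie}; but these are largely formal once the geometries have been recognised.
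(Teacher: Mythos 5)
The paper does not prove this theorem: it is stated as a labelled fact, attributed directly to Theorem~2 of Cherlin--Hrushovski \cite{cherhrush} and used as a black box (notably inside the proofs of \Cref{thmsix} and \Cref{dugaldsconjectureshort}). There is therefore no ``paper's own proof'' for your sketch to be compared against; the only basis for evaluation is consistency with the actual Cherlin--Hrushovski argument.

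On that basis, your outline has the right overall shape. The forward direction genuinely is proved via envelopes, with the heart being that envelopes are homogeneous substructures of $\M$ (Lemma~3.2.4 in \cite{cherhrush}, which the present paper cites), and the existence of an exhausting chain of envelopes indexed by increasing dimension functions; your three checkpoints (i)--(iii) track that argument reasonably. The reverse direction is, as you say, where the Classification of Finite Simple Groups enters via the Kantor--Liebeck--Macpherson analysis of primitive smoothly approximable structures. A few points deserve pushback, though. First, you characterise the orientation issue of \Cref{weakLie} as ``largely formal once the geometries have been recognised'': in the present paper it is suppressed only because quadratic geometries are never needed (\Cref{ignorequad}), but in the full equivalence it is a substantive condition, and Cherlin--Hrushovski devote real work to it; it is not a bookkeeping afterthought. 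Second, in your smooth-$\Rightarrow$-Lie direction you speak of locating ``a $\0$-definable sort on which $\Aut(\M)$ acts primitively'' at each stage. Beyond the root level, the coordinatising geometries in \Cref{Lie-isation-defn} are definable over parameters at lower levels of the tree, not over $\0$, and the recursive passage must be done over those parameters (which is why localisation over a set $A$ in \Cref{localisation} is formulated with parameters). Third, your sketch treats only the projective layer; the algebraic steps and the affine layers of the tree (and, in full generality, the quadratic layers) also need to be recognised and shown to be fully embedded, and your description does not address how the finitely-branching algebraic steps and the affine parts are identified from the action. None of this makes the sketch wrong, but it substantially understates the gap between your outline and an actual proof, which occupies several chapters of \cite{cherhrush} together with the full strength of \cite{klm}.
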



\subsection{Examples}
\label{Lieexamplessec}

We give two examples of Lie coordinatisable structures, returning to \Cref{smoothapproxexmpl1,smoothapproxexmpl2}, which by \Cref{smoothLieequiv} we know must be Lie coordinatisable.

\begin{figure}[h!]
\centering
\def\svgwidth{180pt}
\small{
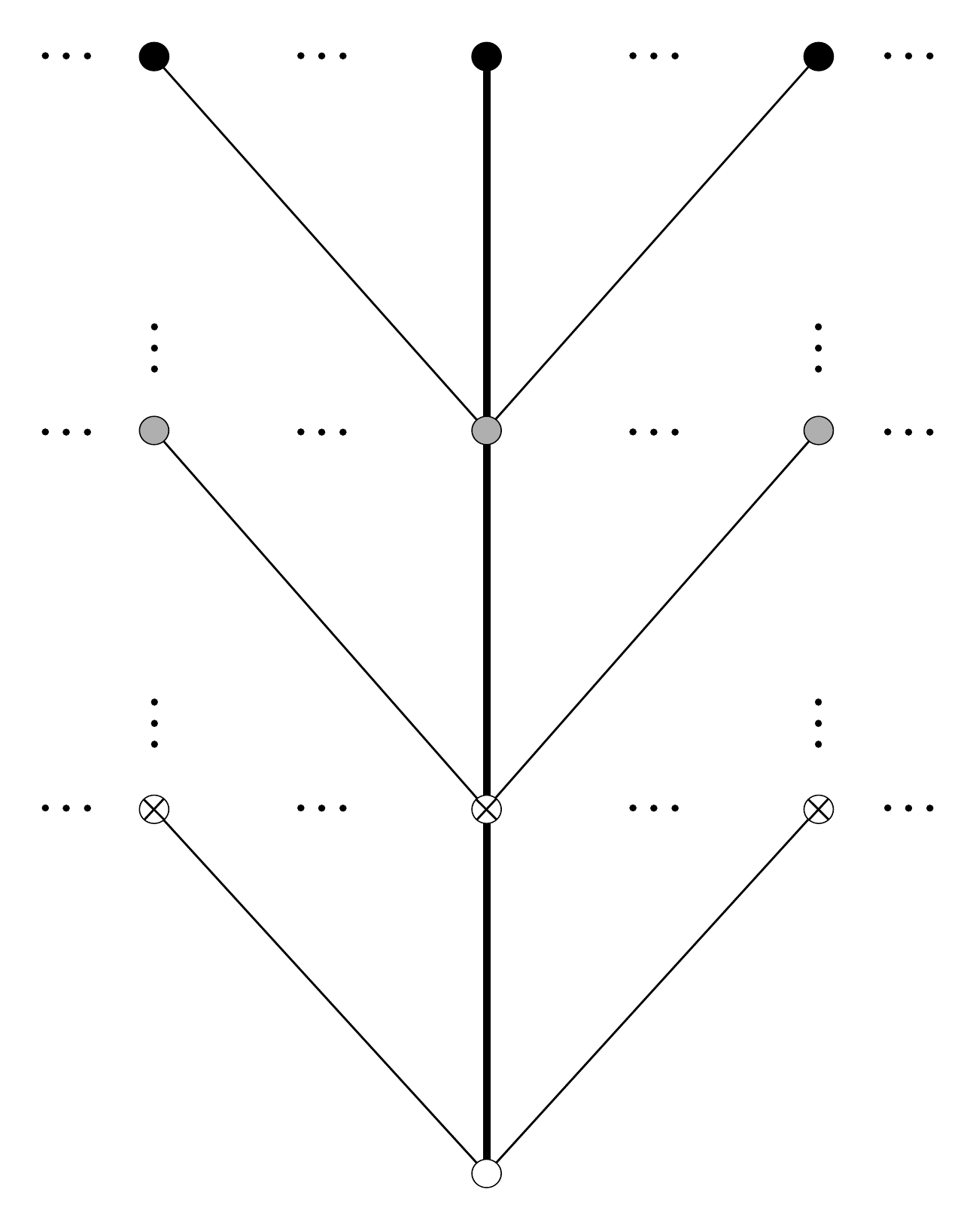}
\caption{A finite fragment of the tree from \Cref{coordinatisationexample1}, with the branch leading to the element $a$ in bold. The nodes are shaded according to membership: The white node is $\Gnum{\M}$, the crossed nodes are elements of $\M/I_1$, the grey nodes are elements of $(a/I_1)/I_2$, and the black nodes are elements of $a/I_2$. The small dots represent the rest of the tree.}\label{tree1}
\end{figure}

\begin{exmpl}[continuation of \Cref{smoothapproxexmpl1}]\label{coordinatisationexample1}
Consider a language $\La:=\{I_1,I_2\}$, where $I_1$ and $I_2$ are binary relation symbols. Let $\M$ be a countable $\La$-structure where $I_1^\M$ and $I_2^\M$ are equivalence relations such that $I_1^\M$ has infinitely many classes, $I_2^\M$ refines $I_1^\M$, every $I_1$-equivalence class contains infinitely many $I_2$-equivalence classes, and every $I_2$-equivalence class is infinite; that is, $\M$ is partitioned into infinitely many $I_1$-equivalence classes, each of which is then partitioned into infinitely many $I_2$-equivalence classes, each of which is infinite. We claim that $\M$ is Lie coordinatisable.

We first outline the tree structure. At the root we place $\Gnum{\M}$ (the canonical parameter of $\M$ in $\M\eq$, which is $\0$-definable), above which we place the $I_1$-classes, as imaginary elements of $\M\eq$. Above each $I_1$-class we then place the $I_2$-classes, again as imaginary elements of $\M\eq$, with every $I_2$-class above the $I_1$-class in which the $I_2$-class is contained. Finally, above each $I_2$-class we place the elements of $\M$ contained in that $I_2$-class. So this tree has height 3 and infinite width at each level.

Let's explain the notation used in \Cref{tree1}. So consider some arbitrary $a\in M$. For $j=1$ or 2, let $a/I_j$ denote the $I_j$-class that contains $a$ and let $\Gnum{a/I_j}$ denote the same $I_j$-class but as a member of $\M\eq$; so $\Gnum{a/I_j}\in\M\eq$ is a canonical parameter for the $a$-definable subset $a/I_j\subset M$. We define $(a/I_1)/I_2$ and $\Gnum{(a/I_1)/I_2}$ similarly.

We now use this notation to check that \Cref{Lie-isation-defn} holds for the tree. The imaginary element $\Gnum{a/I_1}$ lies in the $\Gnum{\M}$-definable degenerate projective geometry $\M/I_1$ and $\Gnum{\M}<\Gnum{a/I_1}$. The imaginary element $\Gnum{a/I_2}$ lies in the $\Gnum{a/I_1}$-definable degenerate projective geometry $(a/I_1)/I_2$ and $\Gnum{a/I_1}<\Gnum{a/I_2}$. Finally, the real element $a$ lies in the $\Gnum{a/I_2}$-definable degenerate projective geometry $a/I_2$ and $\Gnum{a/I_2}<a$. Adjoining a finite number of sorts from $\M\eq$ (recall \Cref{addingsorts}), each of these geometries is fully embedded in $\M$. (Note that $\M/I_2$ is \emph{not} fully embedded, since $I_1$ defines extra structure on $\M/I_2$ that is not definable within $\M/I_2$ using equality alone.) So $\M$ is indeed Lie coordinatisable.
\end{exmpl}

\begin{rem}\label{coordinatisationexample1rem}
This example generalises to the case where we have $n$ equivalence relations $I_1,\ldots,I_n$ such that there are infinitely many $I_1$-classes, $I_{j+1}$ refines $I_j$ and every $I_j$-class contains infinitely many $I_{j+1}$-classes (for $1\leq j\leq n-1$), and every $I_n$-class is infinite. At the base of the tree (the \nth{0} level) we place $\Gnum{\M}$. At the $j^\mathrm{th}$ level (for $1\leq j\leq n-1$) we place the $I_j$-classes, as imaginary elements of $\M\eq$, with every $I_j$-class above the $I_{j-1}$-class in which the $I_j$-class is contained. Finally, at the top of the tree (the $n^\mathrm{th}$ level) we place the elements of $M$, with each $a\in M$ placed above $\Gnum{a/I_n}$.
\end{rem}

\begin{figure}[h!]
\centering
\def\svgwidth{180pt}
\small{
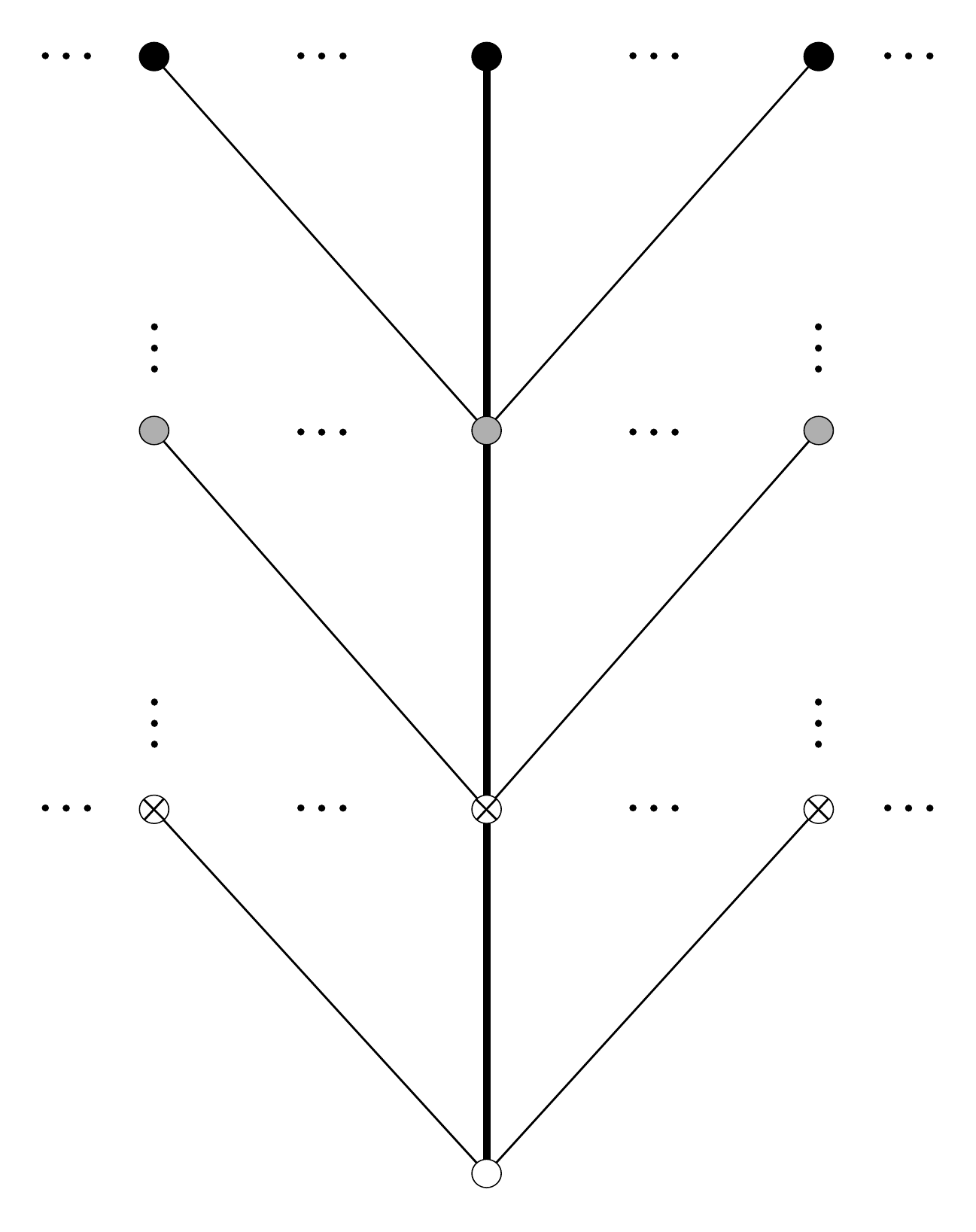}
\caption{A finite fragment of the tree from \Cref{coordinatisationexample2}, with the branch leading to the element $b\in\M_a$ in bold. The nodes are shaded according to membership: The white node is the zero vector, the crossed nodes are elements of $P(\M_0)$, the grey nodes are elements of $a/{\sim}$, and the black nodes are elements of $\M_a$. Note that there are only finitely many (in fact $p-1$) nodes immediately above each crossed node. The small dots represent the rest of the tree.}\label{tree2}
\end{figure}

\begin{exmpl}[Example 2.1.11 in \cite{cherhrush}; continuation of \Cref{smoothapproxexmpl2}]\label{coordinatisationexample2}
Let \allowbreak $\La := \{0,+\}$ and let $p$ be a fixed prime number. (The case $p=2$ \emph{is} allowed.) We define $M$ to be the direct sum of $\omega$-many copies of $\Z/p^2\Z$, i.e.\
\[
M:=\{(a_i)_{i<\omega}:a_i\in\Z/p^2\Z\,\text{ and $a_i=0$ for all but finitely many $i$}\}.
\]
(We specify the direct sum because it is countable, unlike the direct product.) The set $M$ naturally forms an $\La$-structure $\M$, the $\La$-structure arising component-wise from the $\La$-structure of the group $\Z/p^2\Z$. Explicitly:\ $0^\M:=(0)_{i<\omega}$ and $(a_i)_{i<\omega} + (b_i)_{i<\omega}:=(a_i+b_i)_{i<\omega}$. For brevity we write 0 for $0^\M$. We claim that $\M$ is Lie coordinatisable.

We first introduce some notation: For $v\in M$ let $\M_v:=\{a\in M:pa=v\}$, where $pa:=\underbrace{a+a+\cdots+a}_{\text{$p$ times}}$. Observe that $\M_0$ has a vector space structure over $\F_p$ and thus is a linear Lie geometry over $\F_p$. Let $P(\M_0)$ be the projectivisation of $\M_0$ (\Cref{projgeo}). Then $P(\M_0)=(\M_0\setminus\{0\})/{\sim}$, where $a\sim b$ if and only if $a=rb$ for some $r\in\F_p$ (recall \Cref{linearspan}). So $|a/{\sim}|=p-1$ for all $a\in\M_0$. Adjoining a sort for $P(\M_0)$ (recall \Cref{addingsorts}), we also have that $P(\M_0)$ is fully embedded in $\M$.

We now outline the tree structure. At the root we place $0$, above which we place the elements of $P(\M_0)$, considered as imaginary elements of $\M\eq$. On the next level we place the elements of $\M_0\setminus\{0\}$, with each $a$ placed above $\Gnum{a/{\sim}}$. Finally, the top level contains the elements of $\M\setminus\M_0$, with each $b\in\M_a$ placed above $a$. So we have a tree of height 3 and infinite width at each level, although the second level comprises an infinite amount of finite branching. Note that we're using the fact here that if $b\in\M\setminus\M_0$, then $b\in\M_a$ for some $a\in\M_0$. The proof of this fact is straightforward: Suppose that $b\in\M\setminus\M_0$. Then $pb\neq 0$. So $pb=a$ for some $a\in M$. Then $pa=p(pb)=p^2b=0$, since $p^2c=0$ for all $c\in M$. So $b\in\M_a$ and $a\in\M_0$, as required.

Let's check that \Cref{Lie-isation-defn} holds for this tree. So consider some arbitrary non-zero $a\in\M_0$ and $b\in\M_a$. See \Cref{tree2} for an illustration. The imaginary element $\Gnum{a/{\sim}}$ lies in the $0$-definable projective geometry $P(\M_0)$, which is fully embedded, as noted in the previous paragraph, and $0<\Gnum{a/{\sim}}$. The real element $a$ is algebraic over $\Gnum{a/{\sim}}$, since $a/{\sim}$ is $\Gnum{a/{\sim}}$-definable and finite, again as noted in the previous paragraph, and $\Gnum{a/{\sim}}<a$. This leaves us with the top level of the tree, which we deal with in the next paragraph.

Firstly, observe that $0<a<b$. The real element $a$ defines an affine geometry $(\M_0,\M_a)$, where $\M_0$ is the $\F_p$-vector space, $\M_a$ is the $\M_0$-affine space, and the action $\M_0\times\M_a\to\M_a$ is given by $(u,v)\mapsto u+v$. (This action is well-defined, since $p(u+v)=pu+pv=0+a=a$ and so $u+v\in\M_a$.) As we have already noted, the projectivisation of $\M_0$ is $P(\M_0)$, which is a fully embedded, $0$-definable projective geometry, and we have $b\in\M_a$ by assumption. So the tree structure does indeed satisfy the definition of Lie coordinatisation.
\end{exmpl}

\begin{rem}\label{coordinatisationexample2rem}
This example generalises to the direct sum of $\omega$-many copies of $\Z/p^n\Z$, for any $n\in\Na^+$. When $n=1$, the tree structure is the same as in the case $n=2$, except that $\M_0\setminus\{0\}$ forms the top level, since $\M\setminus\M_0=\0$. When $n\geq 3$, the first three levels ($0$, $P(\M_0)$ and $\M_0$) are the same, but at the third level one places the elements of $\{b\in M: \text{$b\in\M_a$ for some $a\in\M_0$}\}$, instead of simply $\M\setminus\M_0$, and at the $(j+1)^\mathrm{th}$ level (for $1\leq j\leq n$) one places $\{c\in M: \text{$c\in\M_b$ for some $b$ in the $j^\mathrm{th}$ level}\}$. The $(n+1)^\mathrm{th}$ level is the upper-most level.
\end{rem}

\begin{rem}
In both \Cref{coordinatisationexample1} and \Cref{coordinatisationexample2} the tree is nicely stratified, namely root--degenerate--degenerate--degenerate in the former and root--projective--algebraic--affine in the latter. This need not be the case, however: There are Lie coordinatising trees containing maximal chains of different lengths. For example, one could take the disjoint union (in a suitable language, with a common root) of two Lie coordinatising trees of different heights.
\end{rem}


\subsection{Standard systems of geometries and envelopes}
\label{envelopessec}

We develop the key notion of an envelope of a Lie coordinatised structure. Our presentation is a simplified version of that given in \cite{cherhrush}, streamlined for the purpose of stating and proving \Cref{5.2.2}. We begin with the notion of a standard system of geometries:

\begin{defn}[Standard system of geometries, Definitions 2.5.1 and 2.5.6 in \cite{cherhrush}]\label{stnsysdefn}
Let $\M$ be a Lie coordinatised $\La$-structure. A \emph{standard system of geometries} in $\M$ is a $\0$-definable function $J\colon A\to\M\eq$ whose domain $A$ is the set of realisations of a 1-type over $\0$ in $\M$ (or the canonical parameter thereof) and whose image is a set of canonical parameters of coordinatising projective Lie geometries of the same kind, i.e.\ $J(a)$ and $J(b)$ are isomorphic for every $a,b\in A$, such that each $a \in A$ parametrises its image $J(a)$.

By `$\0$-definable' we mean that there exists an $\La$-formula $\vphi(x,y)$ such that $\vphi(\M,a)=J(a)$ for every $a\in A$. We write $\dom(J)$ for the domain $A$ of $J$. We abbreviate the term `standard system of geometries' as `SSG' and its plural `standard systems of geometries' as `SSGs'.

If two SSGs have the same image, then they are equivalent. (This is a simplication of the notion of orthogonality developed in \cite{cherhrush}, which we purposefully circumvent in the present work in order to avoid unnecessary complexity.)
\end{defn}

\begin{defn}[Approximations and dimension functions, Definition 3.1.1 in \cite{cherhrush}]\label{liedimfuncdefn}~
\begin{thmlist}
\item A Lie geometry is by definition required to be infinite-dimensional. If we change this to finite-dimensional, then we have an \emph{approximation} of a Lie geometry. For example, if $J$ is a degenerate space, then an approximation of $J$ is a finite set in the language of equality, or if $J$ is the projectivisation of an infinite-dimensional pure vector space over a finite field $K$, then an approximation of $J$ is the projectivisation of a finite-dimensional pure vector space over $K$.

\item Let $\M$ be a Lie coordinatised structure. A \emph{dimension function} is a function $\mu$ on a finite set $S$ of non-equivalent SSGs in $\M$ that assigns an approximation to each $J\in S$, i.e.\ $\mu(J)$ is an approximation of $J(a)$ for some $a\in\dom(J)$; note that this is independent of the choice of $a$, since $J(a)$ is by definition the same kind of projective Lie geometry for every $a\in\dom(J)$. We call $S$ the \emph{domain} of $\mu$, which we denote by $\dom(\mu)$. For each $J\in\dom(\mu)$ we define $\dim \mu(J)$ to be the dimension of $\mu(J)$, except in the degenerate case, where we instead define $\dim\mu(J):=|\mu(J)|$.
\end{thmlist}
\end{defn}

\begin{defn}[$\mu$-Envelope, Definition 3.1.1 in \cite{cherhrush}]\label{envelopedefn} Let $\M$ be a Lie coordinatised structure. Then a \emph{$\mu$-envelope} is a pair $(E,\mu)$ consisting of a finite subset $E\subset M$ and a dimension function $\mu$ for which the following three conditions holds:
\begin{thmlist}
\item $E$ is algebraically closed in $\M$. (Note that this implies that $E$ is a substructure of $\M$.)
\item For every $a\in M\setminus E$ there exist $J\in\dom(\mu)$ and $b\in\dom(J)\cap E$ such that $\acl(E)\cap J(b)$ is a proper subset of $\acl(E,a)\cap J(b)$.
\item For every $J\in\dom(\mu)$ and for any $b\in\dom(J)\cap E$, $J(b)\cap E$ and $\mu(J)$ are isomorphic.
\end{thmlist}
\end{defn}

\begin{rem}~
\begin{thmlist}
\item We often denote a $\mu$-envelope by $E$, rather than $(E,\mu)$, leaving the dimension function as implicit. We similarly often use the term `envelope', rather than `$\mu$-envelope'.

\item It may help the reader's intuition to know that envelopes form homogeneous substructures of $\M$ (Lemma in 3.2.4 \cite{cherhrush}). Indeed, this is how the left-to-right direction of \Cref{smoothLieequiv} is proved (pp.~61--62 of \cite{cherhrush}).

\item In general one can have countably infinite approximations and envelopes, but we do not need to consider them.
\end{thmlist}
\end{rem}

The following definition is fundamental to the work in \Cref{5.2.2sec}:

\begin{defn}[Definition 3.1.1, Notation 5.2.1 and Proposition 5.2.2 in \cite{cherhrush}]\label{envelopedims}
Let $\M$ be a Lie coordinatised structure and consider a $\mu$-envelope $(E,\mu)$ in $\M$, where $\dom(\mu)=\{J_1,\ldots,J_s\}$. For each $J_i$ we define $d_E(J_i):=\dim\mu(J_i)$. We further define $d^*_E(J_i):=(-\sqrt{q})^{d_E(J_i)}$, where $q$ is the size of the base finite field of $\mu(J_i)$, or $d^*_E(J_i):=d_E(J_i)$ in the degenerate case. (Taking $-\sqrt{q}$, rather than just $q$, does initially look strange. It is done solely for unitary spaces:\ see \hyperlink{unitarycalcs}{the end of the proof} of \Cref{5.2.2}.) Finally, we define $\mybar{d^*}(E):=(d_E^*(J_1),\ldots,d_E^*(J_s))$.
\end{defn}

We illustrate the preceding definitions by returning to \Cref{coordinatisationexample1,coordinatisationexample2}:

\begin{exmpl}[continuation of \Cref{coordinatisationexample1}]
Recall that $\M$ is partitioned into infinitely many $I_1$-equivalence classes, each of which is then partitioned into infinitely many $I_2$-equivalence classes, each of which is infinite.

Put simply, an example of an envelope in this case is a subset $E\subseteq\M$ that intersects a fixed number ($n_1$) of $I_1$-classes, a fixed number ($n_2$) of $I_2$-classes within each of these $I_1$-classes, and a fixed number ($n_3$) of elements within each of these $I_2$-classes. So, up to $\La$-isomorphism, an envelope is given by a triple $(n_1,n_2,n_3)$. Two examples of envelopes are
\begin{align*}
E_1&:=\{a_{ijk}:1\leq i\leq 3,1\leq j\leq 6,1\leq k\leq 1\}\\
\text{and }\,E_2&:=\{a_{ijk}:19\leq i\leq 21,3\leq j\leq 8,2015\leq k\leq 2015\},
\end{align*}
where we use the enumerations from \Cref{smoothapproxexmpl1}. The triple for both $E_1$ and $E_2$ is $(n_1,n_2,n_3)=(3,6,1)$. Let's now explain this in terms of SSGs and dimension functions.

\pagebreak[2]
Consider the following three SSGs in $\M$:

\vspace{.25\baselineskip}

\begin{enumerate}[(i)]
\item $J_\alpha\colon\{\Gnum{\M}\}\to\M\eq$, where $J_\alpha(\Gnum{\M}):=\Gnum{\M/I_1}$;
\item $J_\beta\colon\M\to\M\eq$, where $J_\beta(a):=\Gnum{(a/I_1)/I_2}$; and
\item $J_\gamma\colon\M\to\M\eq$, where $J_\gamma(a):=\Gnum{a/I_2}$.
\end{enumerate}

\vspace{.25\baselineskip}

\noindent These are in fact the only SSGs in $\M$, since $\Gnum{\M/I_1}$, $\Gnum{(a/I_1)/I_2}$ and $\Gnum{a/I_2}$ are the only kinds of coordinatising projective Lie geometries in the Lie coordinatisation of $\M$ and because there is only one 1-type over $\0$, its realisation being $\M$. A dimension function $\mu$ on $\{J_\alpha,J_\beta,J_\gamma\}$ assigns an approximation to each of $J_\alpha(\Gnum{\M})$, $J_\beta(a)$ and $J_\gamma(a)$, where $a$ is arbitrary. An approximation of a given Lie geometry is determined by the dimension of the approximation, which in this case is equal to the size of the approximation, since all the projective Lie geometries are degenerate. Thus $\mu$ is determined by a choice of triple $(n_1,n_2,n_3)$. So, if $\mu$ is given by a triple $(n_1,n_2,n_3)$, then a $\mu$-envelope $E$ is a choice of $n_1$ $I_1$-classes, of $n_2$ $I_2$-classes within each of the chosen $I_1$-classes and finally of $n_3$ elements within each of the chosen $I_2$-classes. Furthermore, again because all the projective Lie geometries in this example are degenerate, we have $\mybar{d^*}(E)=(n_1,n_2,n_3)$.
\end{exmpl}

\begin{exmpl}[continuation of \Cref{coordinatisationexample2}]
Recall that $\M$ is a direct sum of $\omega$-many copies of $\Z/p^2\Z$. We first find the SSGs in $\M$. Since there is only one coordinatising projective Lie geometry in the Lie coordinatisation of $\M$, namely $P(\M_0)$, there is only one possible image for an SSG in $\M$, namely $\{\Gnum{P(\M_0)}\}$. Thus there is only one SSG in $\M$ up to equivalence. An example is the following:
\[
J\colon\{ 0\}\to\M\eq, \text{ where } J(0):=\Gnum{P(\M_0)}.
\]

We now consider dimension functions. A dimension function $\mu$ on $\{J\}$ assigns an approximation to $J(0)$. An approximation of $P(\M_0)$ is a finite-dimensional subspace of $P(\M_0)$, which is determined by its dimension $n$ (since the base field $\F_p$ is fixed). Thus, since $J(0)=\Gnum{P(\M_0)}$, $\mu$ is determined by $n$. A $\mu$-envelope $E$ is then a particular choice of an $n$-dimensional subspace of $P(\M_0)$. Such a subspace is a finite power of $\Z/p^2\Z$; that is, a subset
\[
\{(a_i)_{i<\omega}\in M:\text{$a_i\neq 0$ only if $i=t_j$ for some $j$}\}
\]
given by $n$ distinct integers $t_1,\ldots,t_n\in\Na^+$. Since the base field is $\F_p$, which has size $p$, we have
\[
\mybar{d^*}(E)=((-\sqrt{p})^n).
\]
(Here $\mybar{d^*}(E)$ is  1-tuple, hence the apparently superfluous brackets.)
\end{exmpl}


\subsection{Macpherson's conjecture, short version}
\label{macconjshortsec}

We now take a big step towards proving \Cref{dugaldsconjecturefull} by proving a shorter version, namely \Cref{dugaldsconjectureshort}, where the existence of a multidimensional exact class is asserted but the nature of the measuring functions is not specified. We first sketch a proof of part 2 of Theorem 6 from \cite{cherhrush}, as this result is crucial to our proof of \Cref{dugaldsconjectureshort}. The key ingredients needed to prove the result are contained in \cite{cherhrush}, namely Propositions 4.4.3, 4.5.1 and 8.3.2 and their proofs, but the (non-trivial) argument putting them together is not made completely explicit. We state the result in a way that is convenient for our present purposes, but it is essentially the same as the original statement in \cite{cherhrush}, the only significant difference being the use of the equivalence of Lie coordinatisation and smooth approximation (\Cref{smoothLieequiv}).

\begin{thm}\label{thmsix}
Let $\La$ be a finite language and let $d\in\Na^+$. Define $\C(\La,d)$ to be the class of all finite $\La$-structures with at most $d$ 4-types. Then there is a finite partition $\mathcal{F}_1,\ldots,\mathcal{F}_k$ of $\C(\La,d)$ such that the $\La$-structures in each $\mathcal{F}_i$ smoothly approximate an $\La$-structure $\mathcal{F}_i^*$. Moreover, the $\mathcal{F}_i$ are definably distinguishable: For each $\mathcal{F}_i$ there exists an $\La$-sentence $\chi_i$ such that for all $\M\in\C(\La,d)$ above some minimum size, $\M\models\chi_i$ if and only if $\M\in\mathcal{F}_i$.
\end{thm}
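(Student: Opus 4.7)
My plan is to use the Lie coordinatisation theory of \cite{cherhrush} to classify the ``limits'' of sequences in $\C(\La,d)$, combining Propositions 4.4.3, 4.5.1 and 8.3.2 of that reference with the equivalence of smooth approximation and Lie coordinatisability (\Cref{smoothLieequiv}).

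First I would pass to ultraproducts. For any non-principal ultrafilter $\mathcal{U}$ on $\Na$ and any sequence from $\C(\La,d)$, the ultraproduct $\M_{\mathcal{U}}$ realises at most $d$ $4$-types by {\L}os's theorem. The Cherlin--Hrushovski machinery then yields that $\M_{\mathcal{U}}$ is $\aleph_0$-categorical and Lie coordinatisable, hence smoothly approximable by \Cref{smoothLieequiv}. Propositions 4.4.3 and 4.5.1 of \cite{cherhrush} uniformly bound the invariants of the Lie coordinatisation (tree height, number and kind of SSGs, sizes of base fields) in terms of $d$, so that only finitely many complete $\La$-theories $T_1,\ldots,T_k$ can arise as $\Th(\M_{\mathcal{U}})$; fix a representative $\mathcal{F}_i^*$ with $\Th(\mathcal{F}_i^*) = T_i$ for each $i$.

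Next I would use the envelope theory (\Cref{envelopedefn}) to attach each $\M \in \C(\La,d)$ above a uniform size threshold to exactly one $\mathcal{F}_i^*$. This attachment is the content of (a slight strengthening of) Proposition 8.3.2 of \cite{cherhrush}: every sufficiently large such $\M$ is isomorphic to some $\mu$-envelope of exactly one $\mathcal{F}_i^*$. Define $\mathcal{F}_i$ to be the collection of $\M$ assigned to $\mathcal{F}_i^*$ this way, absorbing the finitely many small-size exceptions arbitrarily. Since envelopes are homogeneous substructures of the ambient Lie coordinatisable structure, taking any sequence in $\mathcal{F}_i$ whose envelope dimensions all tend to infinity produces a smooth approximation of $\mathcal{F}_i^*$. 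For the definability clause, since the $T_i$ are pairwise distinct and each is $\aleph_0$-categorical, there exist $\La$-sentences $\chi_i$ such that $\mathcal{F}_j^* \models \chi_i$ if and only if $i = j$. Applying \Cref{eventuallyiff} to the smooth approximation just established, for each $\chi_i$ there is a size threshold beyond which $\M \models \chi_i \iff \mathcal{F}_j^* \models \chi_i$ for $\M \in \mathcal{F}_j$. Taking the maximum over the finitely many $(i,j)$ pairs supplies the required minimum size.

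The main obstacle is the middle step: showing that every sufficiently large $\M \in \C(\La,d)$ is genuinely a $\mu$-envelope of some $\mathcal{F}_i^*$, with both the choice of $\mathcal{F}_i^*$ and of the dimension function $\mu$ determined by invariants of $\M$ that are detected by $\La$-sentences. This amounts to inverting the envelope construction of \cite{cherhrush}, simultaneously bounding the combinatorial complexity of the attaching map and arranging for it to be expressible in $\La$. It is precisely the delicate combination of Propositions 4.4.3, 4.5.1 and 8.3.2 that the author flags as ``not made completely explicit'' in \cite{cherhrush}; I would expect the bulk of the work to lie in unravelling exactly which $\La$-definable invariants of $\M$ suffice to pin down the attaching data.
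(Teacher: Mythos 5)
Your proposal follows the same broad strategy as the paper's proof---ultraproducts, Lie coordinatisation, quasifinite axiomatisability, envelopes and homogeneous substructures---and you correctly identify the hard middle step; but that step is not a loose end, it is the substance of the proof, so what you have is a plan with a genuine gap rather than an argument. Two places where the gap bites. First, your claim that Propositions 4.4.3 and 4.5.1 ``uniformly bound the invariants'' so that ``only finitely many complete $\La$-theories can arise'' is not how the paper gets finiteness. The paper instead proves, as a lemma, that there are only finitely many pairwise elementarily inequivalent Lie coordinatisable $\La$-structures \emph{with a fixed skeletal type} (Proposition 4.4.3's quasifinite axiomatisation plus a {\L}os-theorem contradiction), and then arranges matters so that a single skeletal type governs all of $\C(\La,d)$. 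To even make sense of the skeletal type of an ultraproduct of finite $\La$-structures (the type lives in an extended language $\La_{sk}$), the paper takes the ultraproduct inside a non-standard model of set theory with a G\"{o}del coding, so that the ultraproduct can be read as a structure in the language ultrapower $\La^*$ and then reducted via Proposition 7.5.4. This device---and the attendant weak-versus-full Lie coordinatisability care (Theorem 3 gives only \emph{weak} Lie coordinatisability of the ultraproduct)---is absent from your sketch, and it is exactly what supplies the ``attaching data'' you flag as opaque.

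Second, your definability argument builds the $\chi_i$ as arbitrary sentences separating the $T_i$ and then pushes them down via \Cref{eventuallyiff}. That step is clean in outline, but it presupposes that the classes $\mathcal{F}_i$ (``which $\mathcal{F}_i^*$ does $\M$ attach to'') are already well-defined and that the attaching map respects elementary equivalence; and \Cref{eventuallyiff} is stated for a single smooth-approximation chain, so extending it uniformly across a whole envelope class needs a word. The paper sidesteps both issues by taking $\chi_i$ to be the \emph{characteristic sentence} of $\mathcal{F}_i^*$ furnished by Proposition 4.4.3; those sentences simultaneously define the partition of $\C(\La,d)$ (modulo small structures) and witness that each $\M\in\mathcal{F}_i$ is an envelope of $\mathcal{F}_i^*$, so the attachment and its $\La$-definability come in one package. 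In short: right ingredients, right shape, but the middle step you acknowledge being unable to fill is precisely where the non-standard ultraproduct, the skeletal-type lemma, and the characteristic sentences do the work.
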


\begin{proof}[Sketch of proof]\hspace{-0.4em}\footnote{The main argument was given by Hrushovski in email correspondence and Macpherson provided essential input by working out key details. The contribution of the present author lay in working through further details and writing up the proof.} We first show that there cannot exist infinitely many pairwise elementarily inequivalent Lie coordinatisable $\La$-structures with the same skeletal type, where a skeletal type is, roughly speaking, a full description of the Lie coordinatising tree structure in an extended language $\La_{sk}$; see \secref{4.2} of \cite{cherhrush} for the full definition. So, for a contradiction, suppose that there are in fact infinitely many such $\La$-structures $\{\N_i:i<\omega\}$ with the same skeletal type $S$. Working in $\La_{sk}$, by a judicious choice of ultrafilter we can take a non-principal ultraproduct $\N^*$ of the $\N_i$ such that $\N^*\not\equiv\N_i$ for all $i<\omega$. We may assume that $\N^*$ is countable by moving to a countable elementary substructure. Since the skeletal type $S$ is expressible in $\La_{sk}$ (this is a general fact of skeletal types, not just $S$) and true in each $\N_i$, by {\L}os's theorem $\N^*$ is Lie coordinatised and has skeletal type $S$. Work in chapter 4 of \cite{cherhrush}, especially Proposition 4.4.3 and its proof, shows that every Lie coordinatised structure is quasifinitely axiomatised. Thus $\N^*$ is quasifinitely axiomatised, which means that $\Th(\N^*)$ is axiomatised by a sentence $\sigma$ and an axiom schema of infinity specifying that every dimension in each coordinatising Lie geometry of $\N^*$ is infinite, where we consider $\Th(\N^*)$ as an $\La'$-theory in a finite language $\La'$ containing $\La_{sk}$. This axiom schema of infinity holds for all the $\N_i$ because they each have the same skeletal type as $\N^*$. Furthermore, again by {\L}os's theorem, there exists $j<\omega$ such that $\N_j\models\sigma$. Therefore $\N^*\equiv\N_j$, a contradiction.

We now return to the original class $\C:=\C(\La,d)$. We take an infinite ultraproduct $\U^*$ of the structures in $\C$. We take this ultraproduct in a non-standard model of set theory, working with some suitable G\"{o}del coding of formulas, which allows us to consider $\U^*$ as an $\La^*$-structure, where $\La^*$ is the ultrapower of the language $\La$; that is, $\La^*$ extends $\La$ by including infinitary formulas with nonstandard G\"{o}del numbers, although the number of free variables in any given formula remains finite. We may again assume that $\U^*$ is countable by moving to a countable elementary substructure. $\U^*$ is $4$-quasifinite (Definition 2.1.1 in \cite{cherhrush}) and thus by Theorem 3 in \cite{cherhrush} is weakly Lie coordinatisable (see \Cref{weakLie}). So by Proposition 7.5.4 in \cite{cherhrush} the $\La$-reduct $\U$ of $\U^*$ is also weakly Lie coordinatisable. The $\La$-structure $\U$ thus has a skeletal type. By the first part of the proof there can be only finitely many pairwise elementarily inequivalent Lie coordinatisable $\La$-structures with this skeletal type, say $\mathcal{F}_1^*,\ldots,\mathcal{F}_k^*$. By Proposition 4.4.3 in \cite{cherhrush}, each $\mathcal{F}_i^*$ has a characteristic sentence, say $\chi_i$. The $\chi_i$ yield a partition $\C=\mathcal{F}_1\cup\ldots\cup\mathcal{F}_k$, where each $\chi_i$ is true in all $\M\in\mathcal{F}_i$ and false in all $\M\in\mathcal{F}_j$ for $j\neq i$, potentially with the exception of some small structures. Moreover, again by Proposition 4.4.3, this partition is such that each $\M\in\mathcal{F}_i$ is an envelope of $\mathcal{F}_i^*$ and so by work in chapter 3 of \cite{cherhrush} the structures in $\mathcal{F}_i$ smoothly approximate $\mathcal{F}_i^*$.

Note that the work cited from chapter 4 of \cite{cherhrush} is written in terms of Lie coordinatisability, but inspection of the proofs shows that weak Lie coordinatisability suffices (see \Cref{weakLie}).
\end{proof}

\begin{cor}[Macpherson's conjecture, short version]\label{dugaldsconjectureshort}
For any countable language $\La$ and any $d\in\Na^+$ there exists $R$ such that the class $\C(\La,d)$ of all finite $\La$-structures with at most $d$ 4-types is an $R$-mec in $\La$.
\end{cor}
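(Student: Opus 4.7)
The plan is to assemble the result by combining the structural decomposition \Cref{thmsix} with the exactness principle \Cref{sapproxmecshort}, and then lifting from finite to countable languages via \Cref{reductmec}. First I would reduce to the case of finite $\La$. Since reducts can only collapse $4$-types, the $\La'$-reduct of any structure in $\C(\La,d)$ lies in $\C(\La',d)$ for every $\La'\subseteq\La$. By \Cref{smallermac}, any $R_{\La'}$-mec structure on $\C(\La',d)$ transfers to the subclass $(\C(\La,d))_{\La'}$. Taking $R$ to be a suitable union of the $R_{\La'}$ (pulled back along the reduct map, which leaves $|M|$ unchanged), \Cref{reductmec} then yields the $R$-mec structure on $\C(\La,d)$ in $\La$.

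Assume from now on that $\La$ is finite. Apply \Cref{thmsix} to obtain the partition $\C(\La,d)=\mathcal{F}_1\cup\cdots\cup\mathcal{F}_k$ together with the distinguishing $\La$-sentences $\chi_1,\ldots,\chi_k$ (separating the $\mathcal{F}_i$ for all $\M\in\C(\La,d)$ with $|M|>Q$, for some threshold $Q$) and the smoothly approximated limit structures $\mathcal{F}_i^*$. By \Cref{sapproxmecshort}, each $\mathcal{F}_i$ is an $R_i$-mec in $\La$ for some $R_i$. Set $R:=R_1\cup\cdots\cup R_k\cup\{\M\mapsto j:0\leq j\leq Q\}$, where each $h\in R_i$ is extended by zero off $\mathcal{F}_i$.

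Given an $\La$-formula $\vphi(\mybar{x},\mybar{y})$ with $m:=l(\mybar{y})$, each $\mathcal{F}_i$ supplies a definable partition $\pt_i$ of $\mathcal{F}_i(m)$ with defining $\La$-formulas $\psi_{i,\ptmem}(\mybar{y})$ and measuring functions $h_{i,\ptmem}\in R_i$. Combining these yields a definable partition of $\{(\M,\mybar{a})\in\C(\La,d)(m):|M|>Q\}$ whose parts are defined by the formulas $\chi_i\wedge\psi_{i,\ptmem}(\mybar{y})$, using that above the threshold $Q$ each $\chi_i$ precisely isolates the structures of $\mathcal{F}_i$, with corresponding measuring functions the extensions $h_{i,\ptmem}\in R$. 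The finitely many remaining pointed structures $(\M,\mybar{a})$ with $|M|\leq Q$ are then absorbed via \Cref{finiteexceptions}, which applies because $R$ contains the required constant functions.

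The principal difficulty of the argument lies not in this final assembly, which amounts to bookkeeping once the ingredients are in place, but in the deep structural theorems that secure those ingredients. \Cref{thmsix} rests on the full machinery of \cite{cherhrush}: quasifinite axiomatisability, the equivalence of Lie coordinatisability with smooth approximation (\Cref{smoothLieequiv}), and, in the background, the classification of finite simple groups. Given \Cref{thmsix}, however, the upgrade from approximate to exact classes via smooth approximation is essentially immediate.
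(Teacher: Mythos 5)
Your proposal is correct and follows essentially the same route as the paper: reduce to finite $\La$ via \Cref{reductmec} and \Cref{smallermac}, partition $\C(\La,d)$ via \Cref{thmsix}, apply \Cref{sapproxmecshort} to each piece, glue the definable partitions with the $\chi_i$, and absorb small structures via \Cref{finiteexceptions}. Your version is if anything slightly more careful, explicitly adding the constant functions required by \Cref{finiteexceptions} to $R$ and extending each $h \in R_i$ by zero off $\mathcal{F}_i$ — bookkeeping the paper leaves implicit.
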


\begin{proof} Let $\C:=\C(\La,d)$. The reader should recall \Cref{smallermac}, as we will use it at various points in this proof.

First suppose that $\La$ is finite. By \Cref{thmsix}, $\C$ can be finitely partitioned into subclasses $\mathcal{F}_1,\ldots,\mathcal{F}_k$ such that the structures in each $\mathcal{F}_i$ smoothly approximate an $\La$-structure $\mathcal{F}_i^*$. Thus by \Cref{sapproxmecshort} each $\mathcal{F}_i$ is an $R_i$-mec in $\La$ for some $R_i$. Let $R_\La:=R_1\cup\cdots\cup R_k$. We claim that $\C$ is an $R_\La$-mec in $\La$.

We prove this claim: Let $\vphi(\mybar{x},\mybar{y})$ be an $\La$-formula with $n:=l(\mybar{x}) \geq 1$ and $m:=l(\mybar{y})$. Since each $\mathcal{F}_i$ is an $R_i$-mec, we have a suitable finite partition $\pt_i$ of each $\mathcal{F}_i(m)$. Then $\pt_1\cup\dots\cup\pt_k$ is a finite partition of $\C(m)$ and so $\C$ is a weak $R_\La$-mec in $\La$. It remains to show that the definability clause holds. We again use \Cref{thmsix}: For each $\mathcal{F}_i$ there is an $\La$-sentence $\chi_i$ such that $\M\models\chi_i$ if and only if $\M\in\mathcal{F}_i$, for sufficiently large $\M$. So, by conjoining $\chi_i$ to the defining $\La$-formulas of each $\pt_i$, we satisfy the definability clause, using \Cref{finiteexceptions} to deal with the finite number of potential exceptions. So the claim is proved.

Now suppose that $\La$ is infinite. Consider some arbitrary finite $\La'\subset\La$ and let $\C_{\La'}$ denote the class of all $\La'$-reducts of structures in $\C$. Each structure in $\C_{\La'}$ has at most $d$ 4-types, since a reduct cannot have more types than the original structure. Thus, by the first part of the proof, $\C_{\La'}$ is an $R_{\La'}$-mec in $\La'$. (It could be the case that $\C_{\La'}$ is a proper subclass of the class of all finite $\La'$-structures with at most $d$ 4-types, but that wouldn't matter, since a subclass of an $R$-mec is also an $R$-mec.) Let $\mathbb{L}$ be the set of all finite subsets of $\La$ and define
\[
R:=\bigcup_{\La'\in\mathbb{L}} R_{\La'}.
\]
Then each $\C_{\La'}$ is an $R$-mec in $\La'$ by \Cref{smallermac}. Therefore $\C$ is an $R$-mec in $\La$ by \Cref{reductmec}.
\end{proof}

\begin{rem}\label{4-types}
The reader may well be wondering what's so special about $4$-types. Well, firstly, if there is a bound on the number of $n$-types, then there is a bound on the number of $k$-types for all $k\leq n$. So in the statement of \Cref{dugaldsconjecturefull} we could replace $4$-types with $n$-types for any $n>4$ and the result would still go through. As for $4$ itself, the explanation goes deeper and we will not go into detail. However, put \emph{very} roughly, the number 4 arises because the projective linear group preserves the cross-ratio, which is a projective invariant on $4$-tuples of colinear points. The classification of finite simple groups also plays a role. Details can be found in \secref{6} of \cite{amsw}, \cite{klm} and \cite{macsmooth}. Note that in \cite{macsmooth} the original bound on $5$-types, as given in \cite{klm}, is improved to one on $4$-types.
\end{rem}


\subsection{Definable sets in envelopes}
\label{5.2.2sec}

\Cref{dugaldsconjectureshort} provides no information about the structure of $R$, only its existence. In this section we use Lie geometries to ascertain information about the nature of $R$. We first need to define a rank, which we name \emph{CH-rank} after Cherlin and Hrushovski:

\begin{defn}[CH-rank, Definition 2.2.1 in \cite{cherhrush}]\label{rank}
Let $\M$ be an $\La$-structure and let $D\subseteq\M\eq$ be a parameter-definable set. We define the \emph{CH-rank} of $D$ as follows:
\begin{thmlist}
\item $\rk(D)=-1$ if and only if $D=\0$.
\item $\rk(D)>0$ if and only if $D$ is infinite.
\item For $n\in\Na$, $\rk(D)\geq n+1$ if and only if there exist parameter-definable subsets $D_1,D_2\subseteq\M\eq$ and parameter-definable functions $\pi\colon D_1\to D$ and $f\colon D_1\to D_2$ such that:

\vspace{.25\baselineskip}

\begin{enumerate}[(a)]
\item $\rk(\pi^{-1}(d))=0$ for all $d\in D$;
\item $\rk(D_2)>0$; and
\item $\rk(f^{-1}(d))\geq n$ for all $d\in D_2$.
\end{enumerate}

\vspace{.25\baselineskip}
\end{thmlist}
If $\rk(D)>n$ for all $n\in\Na$, then we define $\rk(D)=\infty$.

Note that we will often drop the prefix `CH-' and simply refer to `rank'. 
\end{defn}

\begin{rem}
The core idea of the preceding definition is straightforward: A set has rank at least $n+1$ iff it can be parameter-definably partitioned into infinitely many subsets of rank at least $n$. The role of $\pi$ in the definition is to preserve rank under finite parameter-definable projections; however, this will not be necessary for the pruposes of the present work and thus we will assume throughout that $D_1=D$ and $\pi = \Id$, where $\Id$ denote the identity function.
\end{rem}

We provide examples of CH-rank by returning to our running examples:

\begin{exmpl}[continuation of \Cref{coordinatisationexample1}]
Recall that $\M$ is partitioned into infinitely many $I_1$-equivalence classes, each of which is then partitioned into infinitely many $I_2$-equivalence classes, each of which is infinite. Also recall that for $a\in M$ and $j=1$ or 2, $a/I_j$ denotes the $I_j$-class containing $a$ while $\Gnum{a/I_j}$ denotes the same $I_j$-class but as a member of $\M\eq$; in other words $\Gnum{a/I_j}\in\M\eq$ is a canonical parameter for the $a$-definable subset $a/I_j\subset M$.

We first calculate the rank of an $I_2$-class. So let $D:=a/I_2$ for some $a\in M$. Thus, since $D$ is infinite, $\rk(D)\geq 1$. Moreover, we see that $\rk(D) \ngeq 1+1$, for otherwise we would be able to parameter-definably partition an $I_2$-class into infinitely many infinite subsets, which is not possible in the $\La$-structure of $\M$. So the rank of an $I_2$-class is 1.

We now calculate the rank of an $I_1$-class. So let $D:=a/I_1$ for some $a\in M$. We set $D_2:=\{\Gnum{b/I_2}:b\in D_1\}$ and $f(b):=\Gnum{b/I_2}$. Since $D_2$ is infinite and each $I_2$-class has rank 1 (as shown in the previous paragraph), we see that $\rk(D) \geq 1+1$. We see that $\rk(D) \ngeq 2+1$ by the same reasoning given in the previous paragraph: In the $\La$-structure of $\M$ the only parameter-definable infinite partition of an $I_1$-class into infinite subsets is the partition induced by $I_2$; there is no other parameter-definable infinite partition of an $I_1$-class and there is no parameter-definable way to refine $I_2$. So the rank of an $I_1$-class is 2.

Lastly, we show that $\M$ has rank 3. So let $D:=M$. We set $D_2:=\{\Gnum{a/I_1}:a\in D\}$ and $f(b)=\Gnum{b/I_1}$. Since $D_2$ is infinite and each $I_1$-class has rank 2 (as already shown), we see that $\rk(D) \geq 2+1$. By similar reasoning given in the previous paragraphs, we have $\rk(D) \ngeq 3+1$. So $\M$ has rank 3.
\end{exmpl}

\begin{exmpl}[continuation of \Cref{coordinatisationexample2}]
Recall that $\M$ is a direct sum of $\omega$-many copies of $\Z/p^2\Z$. We define $\M_v:=\{a\in M:pa=v\}$ and $P(\M_0)=(\M_0\setminus\{0\})/{\sim}$, where $a\sim b$ if and only if $a=rb$ for some $r\in\F_p$ (recall \Cref{linearspan}). So $|a/{\sim}|=p-1$ for all $a\in\M_0$.

Consider some arbitrary $v \in \M$. Then $\rk(\M_v) \geq 1$ because $\M_v$ is infinite. Likewise $\rk(P(\M_0)) \geq 1$ because $P(\M_0)$ is also infinite. We further see that $\rk(\M_v) \ngeq 1+1$, since there is no parameter-definable infinite partition of $\M_v$ into infinite subsets. Similarly $\rk(P(\M_0)) \ngeq 1+1$. So $\rk(\M_v) = \rk(P(\M_0)) = 1$.

We now show that $\rk(\M\setminus \M_0)$ has rank 2. Set $D := \M\setminus \M_0$, $D_1 := D$, $\pi := \Id$, $D_2 := \M_0$ and $f(b) := pb$. Then for each $a \in D_2$ we have $f^{-1}(a) = \M_a$ and so $\rk(f^{-1}(a)) \geq 1$, since $\M_a$ is infinite. Thus $\rk(\M\setminus \M_0) \geq 1 + 1$. We see that $\rk(\M\setminus \M_0) \ngeq 2 + 1$ because there is no further parameter-definable partitioning. So $\rk(\M\setminus \M_0) = 2$.

Lastly, we see $\M$ has rank 2, since it is a superset of the rank-2 set $\M\setminus \M_0$ and because there is no further parameter-definable partitioning in $\M$.
\end{exmpl}

With a rank now defined, we are in a position to prove \Cref{5.2.2}. This result provides us with information about the sizes of definable sets in envelopes, which we will then use in \Cref{sectionmacphersonconjfull} to shed light on the structure of $R$ in \Cref{dugaldsconjectureshort}. It uses \Cref{envelopedims} and is a generalisation of Proposition 5.2.2 in \cite{cherhrush}. Proposition 5.2.2 in \cite{cherhrush} is essentially about the formula $x=x$, since it concerns the sizes of envelopes, rather than the sizes of definable subsets of envelopes, and arbitrary formulas with parameters arise only as part of the proof. In contrast, \Cref{5.2.2} concerns arbitrary formulas with parameters from the outset and so more complexity arises. We also go into considerably more detail on certain points than in the proof given in \cite{cherhrush}.

\begin{propn}[cf.\ Propsition 5.2.2 in \cite{cherhrush}]\label{5.2.2}
Let $\E$ be an ordered family of envelopes of a Lie coordinatised $\La$-structure $\M$ such that $\dom(\mu)=\dom(\mu')$ for all $(E,\mu),(E',\mu')\in\E$ and such that the parity and signature of orthogonal spaces are constant on the family, where by `ordered family' we mean that for all $(E,\mu),(E',\mu')\in\E$ either $E\subseteq E'$ or $E'\subseteq E$. Let $\mybar{a}\in M^m$ (where $m$ is arbitrary), let $D_{\mybar{a}}\subseteq M$ be an $\La_{\mybar{a}}$-definable set and let $s$ be the size of the common domian of the dimension functions. Then there exists a polynomial $\rho\in\Q[\mathbf{X}_1,\ldots,\mathbf{X}_s]$ and an integer $Q\in\Na$ such that $|D_{\mybar{a}}\cap E|=\rho(\mybar{d^*}(E))$ for all $(E,\mu)\in\E$ with $|E|>Q$ and $\mybar{a}\in E^m$.
\end{propn}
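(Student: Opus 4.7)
The plan is to proceed by combining the ideas of Proposition 5.2.2 of \cite{cherhrush} (the $D_{\mybar{a}} = M$, $\mybar{a}$ empty case) with a localisation argument to incorporate the parameters $\mybar{a}$ and the definable set $D_{\mybar{a}}$. Throughout I will write $\mu'$ for the ``shifted'' dimension function used to handle localisation. The first reduction is based on $\aleph_0$-categoricity (\Cref{liecat}): the formula $\varphi(x,\mybar{a})$ defining $D_{\mybar{a}}$ cuts out a finite union of $\Aut(\M/\mybar{a})$-orbits on $M$, and since a sum of polynomials is a polynomial, it suffices to treat the case in which $D_{\mybar{a}}$ is a single $\Aut(\M/\mybar{a})$-orbit $O$. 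Since envelopes are homogeneous substructures of $\M$ (a standard property exploited in \cite{cherhrush} and re-used in the proof of \Cref{sapproxmecshort}), $O \cap E$ is a single $\Aut_{\{E\}}(\M/\mybar{a})$-orbit, so its cardinality depends only on $\tp^{\M}(\mybar{a})$ and on $E$.

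Next I would expand by naming the tuple $\mybar{a}$, working in $(\M,\mybar{a})$. The expanded structure is again Lie coordinatised, with each coordinatising projective Lie geometry $P$ of $\M$ replaced by its localisation $P/\mybar{a}$ in the sense of \Cref{localisation}, and each affine geometry replaced by a localised affine geometry over this quotient. The envelope $E$ is then a $\mu'$-envelope in $(\M,\mybar{a})$ for a dimension function $\mu'$ whose values are obtained from those of $\mu$ by bounded shifts: for each $J\in\dom(\mu)$, the dimension $\dim\mu'(J/\mybar{a})$ equals $\dim\mu(J)-k_J$ where $k_J\in\Na$ depends only on $\tp^\M(\mybar{a})$ and on which nodes $\mybar{a}$ touches in the coordinatising tree. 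Under the $d^*$-transform this becomes multiplication by a constant $(-\sqrt{q_J})^{-k_J}$, so a polynomial in $\mybar{d^*_{\mu'}}(E)$ translates back to a polynomial in $\mybar{d^*_\mu}(E)$ (with coefficients in $\Q$, since the factors $(-\sqrt{q_J})^{-k_J}$ appear in pairs via the structure of the classical orbit-counting formulas, as discussed below).

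The main counting argument, which recovers the original Proposition~5.2.2, is then run on $(\M,\mybar{a})$ with the set $O$ in place of $M$: descend through the coordinatising tree from $O$ down toward the root, and at each step fibre over the (finitely many) possibilities for the coordinates of a putative element of $O\cap E$. Each such fibration either contributes a bounded finite factor (at an algebraic node, i.e.\ where the predecessor has only finitely many immediate successors), or contributes the number of elements in a uniformly definable subset of a coordinatising Lie geometry approximated by $\mu'$. For the latter, one invokes the classical formulas for $\Aut$-orbits on finite classical geometries (numbers of points, isotropic/non-isotropic vectors, subspaces of prescribed Witt type, cosets in affine analogues, etc.); the hypothesis that the parity and signature of orthogonal spaces are constant across $\E$ is exactly what makes these formulas uniform. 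All such counts are polynomials in $\mybar{d^*_{\mu'}}(E)$; this is precisely the reason for taking $d^*_E(J)=(-\sqrt{q})^{d_E(J)}$ rather than $q^{d_E(J)}$, as it is needed to accommodate unitary geometries, whose orbit sizes are polynomial in $(-q)^n$ but not in $q^n$. Multiplying through the levels of the tree yields a polynomial $\rho\in\Q[\mathbf{X}_1,\ldots,\mathbf{X}_s]$ with $|O\cap E|=\rho(\mybar{d^*}(E))$ for $|E|$ large enough.

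The main obstacle is the step of showing that the localisation really does exhibit the expected dimension shifts \emph{uniformly} across the family $\E$, together with checking that the orbit-counting polynomials for non-degenerate subspaces and for cosets in affine Lie geometries really are polynomial in the correct variable $(-\sqrt{q})^n$ in the unitary case; both points are technical but follow closely the template of \S 5.2 in \cite{cherhrush}. The threshold $Q$ is precisely the size above which the relevant approximations are ``large enough'' for the generic classical counting formulas to be valid and for \Cref{eventuallyiff}-style transfer between $\M$ and the envelopes to apply; below $Q$ the formula $\rho$ may fail, but only for finitely many envelopes in $\E$, which can be absorbed separately as in \Cref{finiteexceptions}.
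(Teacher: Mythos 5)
Your proposal attacks the same statement with the same essential ingredients (reduction to a single $1$-type via Ryll--Nardzewski/homogeneity, fibering through the coordinatising tree, localisation, and the classical orbit-counting formulas for the geometries), but the overall organisation differs from the paper's in two respects. First, the paper does not localise the whole structure at $\mybar{a}$ up front; it inducts on CH-rank, and at each inductive step it finds the minimal non-algebraic coordinate $c$ of a realisation $d$, proves the two-factor decomposition $|D_{\mybar{a}}\cap E|=|S_{E1}|\cdot|D_{\mybar{a}c}\cap E|$ where $S_1$ is $\mybar{a}$-definable in a single coordinatising projective geometry $J$ and $D_{\mybar{a}c}$ has strictly smaller rank, and then localises only $J$ (at $\mybar{a}$, or at $\mybar{a}b$ after naming one algebraic point $b$) to run the classical-group calculation. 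Second, your route requires the global fact that naming $\mybar{a}$ yields a Lie coordinatised $(\M,\mybar{a})$ whose coordinatising geometries are the localisations $P/\mybar{a}$ and in which the same $E$ (when $\mybar{a}\in E$) is again a $\mu'$-envelope with the stated uniform dimension shifts; you correctly flag this as the main obstacle, but it is precisely the point the paper's rank induction is designed to avoid, replacing one global localisation lemma by an induction whose base and step each only touch one geometry at a time. The trade-off: your version stays closer to the shape of Cherlin--Hrushovski's original Proposition~5.2.2 and is conceptually direct, while the paper's version keeps the localisation local and controlled, making the decomposition of the cardinality into a product with a lower-rank factor completely explicit, and the rationality of the coefficients (your ``factors appear in pairs'' hand-wave) falls out of the explicit geometry-by-geometry calculation rather than needing a separate argument. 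Both routes are sound; yours would become a full proof once the envelope-localisation lemma and the rational-coefficient bookkeeping are supplied.
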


\begin{rem}\label{sigparassump}
We offer a brief explanation of the parity/signature assumption; full details can be found in \secref{21} of \cite{asch}. The parity of a finite-dimensional orthogonal space $V$ refers to $\dim(V)$, distinguishing between odd and even dimension. The signature refers to the quadratic form on $V$, there being only two possibilities (up to equivalence); in the even-dimensional case this is determined by the Witt index and in the odd-dimensional case by the hyperbolic hyperplane. The assumption is important, but its use is restricted to the calculations at the end of the proof, and there only in the orthogonal case.
\end{rem}

\begin{proof}[Proof of \Cref{5.2.2}] Let $\vphi(x,\mybar{a})$ be the $\La_{\mybar{a}}$-formula that defines $D_{\mybar{a}}$. So $D_{\mybar{a}}=\vphi(\M,\mybar{a})$. By \Cref{liecat} and the Ryll-Nardzewski Theorem we may assume without loss of generality that $\vphi(x,\mybar{a})$ defines the set of realisations of a 1-type $r(x)$ over $\mybar{a}$ in $\M$. So $D_{\mybar{a}}=r(\M)$. Also note that since $\mathcal{E}$ is ordered by $\subseteq$, either $|D_{\mybar{a}}\cap E|=\0$ for all $(E,\mu)\in\E$ or there exists $Q\in\Na$ such that $|D_{\mybar{a}}\cap E|\neq\0$ for all $(E,\mu)\in\E$ with $|E|>Q$. In the former case we can set $Q:=0$ and $\rho:=0$. So we henceforce assume that we are in the latter case. With these two assumptions in hand, we are now in a position to start the main line of argument. We proceed by induction on CH-rank.

First suppose that $\rk(D_{\mybar{a}})=0$. Then $D_{\mybar{a}}$ is finite. Let $k:=|D_{\mybar{a}}|$. Since $D_{\mybar{a}}$ is both finite and $\mybar{a}$-definable, $D_{\mybar{a}}\subseteq\acl(\mybar{a})$. Thus, since envelopes are algebraically closed (\Cref{envelopedefn}), $D_{\mybar{a}}\subseteq E$ for all $E\in\E$ with $\mybar{a}\in E^m$. So $|D_{\mybar{a}}\cap E|=|D_{\mybar{a}}|=k$ for all $E\in\E$ with $\mybar{a}\in E^m$. Hence the constant polynomial $\rho:=k$ suffices.

Now consider the case $\rk(D_{\mybar{a}})>0$. Then $D_{\mybar{a}}$ is infinite. Assume as the induction hypothesis that the result holds for any parameter-definable subset of $M$ with CH-rank strictly less than $\rk(D_{\mybar{a}})$. Let $d\in D_{\mybar{a}}$. For a contradiction, suppose that every step in the tree below $d$ is algebraic; that is, if $c_0<c_1<\cdots<c_t=d$ is the chain leading to $d$, where $c_0$ is the root of the tree, then each $c_{i+1}$ is algebraic over its immediate predecessor $c_i$. We claim that $d\in\acl(\0)$.

\hypertarget{mini-induction}We prove this claim. We proceed by induction on $i$ to show that $c_i\in\acl(\0)$ for every $i$, and so in particular $d=c_t\in\acl(\0)$. Since the root is $\0$-definable (\Cref{Lie-isation-defn}), $c_0\in\dcl(\0)\subseteq\acl(\0)$. Now suppose that $c_i\in\acl(\0)$. Then $c_{i+1}\in\acl(\acl(\0))$, since $c_{i+1}\in\acl(c_i)$ by our supposition. But $\acl(\acl(\0))=\acl(\0)$, since algebraic closure is idempotent, and hence $c_{i+1}\in\acl(\0)$. So the claim is proved.

We now use the claim to derive a contradiction. Since $d\in\acl(\0)$, there exists some $\La$-formula $\chi(x)$ such that $\M\models\chi(d)$ and $\chi(\M)$ is finite. So $\chi(x)\in\tp(d/\0)\subseteq\tp(d/\mybar{a})=r(x)$ and hence $D_{\mybar{a}}=r(\M)\subseteq\chi(\M)$ is finite, a contradiction.

So by the contradiction there exists $c\leq d$ such that $c$ is not algebraic over its immediate predecessor. Take $c$ to be minimal, i.e.\ lowest in the tree. By \Cref{Lie-isation-defn} the non-algebraicity of $c$ implies that $c$ lies in a coordinatising geometry $J$, where $J$ is $b$-definable for some $b<c$. The minimality of $c$ implies that $J$ is a projective Lie geometry, since the vector and affine parts of a coordinatising affine Lie geometry lie above the projectivisation of the vector part. Recalling \Cref{ignorequad}, the same argument applies to quadratic geometries: The affine part $Q$ of a coordinatising quadratic geometry, namely the set of quadratic forms on which the vector part $V$ acts by translation, lies above $V$ in the tree, $V$ being a symplectic space. So the minimality of $c$ implies that $J$ is the projectivisation of $V$.

\emph{Case 1:\ The element $b$ is the root.} Then $b\in\dcl(\0)$ and so $J$ is $\0$-definable. We define a set that is central to our argument:
\[
S:=\{(c',d')\in M^2:\tp((c',d')/\mybar{a})=\tp((c,d)/\mybar{a})\}.
\]
Let $S_i$ be the projection of $S$ to the $i^{\mathrm{th}}$ coordinate. Then $S_1$ is the set of realisations of $\tp(c/\mybar{a})$ and $S_2$ is the set of realisations of $\tp(d/\mybar{a})$, as proved in the next paragraph. Then $S_1\subseteq J$, since $c\in J$ and $J$ is $\0$-definable, and $S_2=D_{\mybar{a}}$, since $\tp(d/\mybar{a})=r(x)$.

We prove the claim that $S_1$ is the set of realisations of $\tp(c/\mybar{a})$: If $c'\in S_1$, then it is immediate from the definition of $S$ that $c'\models\tp(c/\mybar{a})$. Now suppose that $c'\models\tp(c/\mybar{a})$. By the Ryll-Nardzewski Theorem, $\M$ is saturated and thus there exists $\sigma\in\Aut(\M/\mybar{a})$ such that $\sigma(c)=c'$; we'll use this trick several more times and henceforth won't cite it explicitly. Thus $(c',\sigma(d))=(\sigma(c),\sigma(d))\in S$ and hence $c'\in S_1$. So the claim is proved. The proof of the claim that $S_2$ is the set of realisations of $\tp(d/\mybar{a})$ proceeds symmetrically.

Let's now consider the intersection of $D_{\mybar{a}}$ with an envelope. So take some arbitrary $(E,\mu)\in\E$ with $|E|>Q$ and $\mybar{a}\in E$. Since $D_{\mybar{a}}\cap E\neq\0$, we may assume without loss of generality that $d\in E$, for if $d\notin E$, then we may take some $d'\in D_{\mybar{a}}\cap E$ and repeat the previous arguments for this new element $d'$.

Define
\[
S_E:=\{(c',d')\in S:d'\in E\}.
\]
We will use this set to calculate the size of $D_{\mybar{a}}\cap E$, but we first need to go over some preliminaries. Let $S_{Ei}$ be the projection of $S_E$ to the $i^{\mathrm{th}}$ coordinate. Then $S_{E2}=S_2\cap E=D_{\mybar{a}}\cap E$. We claim that $S_{E1}=S_1\cap E$.

We prove this claim. Let $c'\in S_{E1}$. Then $(c',d')\in S_E$ for some $d'\in E$. Now, $c'\leq d'$ and so $c'\in\dcl(d')$. Thus, since envelopes are algebraically closed (by definition), $c'\in E$. So $c'\in S_1\cap E$ (since $S_{E1}\subseteq S_1$), as required. Now let $c'\in S_1\cap E$. Let $d''\in D\cap E$. Since $\tp(d''/\mybar{a})=\tp(d/\mybar{a})$, there exists $\sigma\in\Aut(\M/\mybar{a})$ such that $\sigma(d)=d''$. Let $c'':=\sigma(c)$. Then $(c'',d'')\in S_E$. By the same argument used earlier in this paragraph, $c''\in E$. Now, $\tp(c''/\mybar{a})=\tp(c'/\mybar{a})$ and so there exists $\sigma'\in\Aut(\M/\mybar{a})$ such that $\sigma'(c'')=c'$. Now, since envelopes are homogeneous substructures (Lemma 3.2.4 in \cite{cherhrush} and \Cref{defnsmoothapprox}) and $c',c''\in E$, we may assume that $\sigma(E)=E$. Let $d':=\sigma'(d'')$. Then $d'\in E$, since $d''\in E$. Hence $(c',d')\in S_E$ and so $c'\in S_{E1}$, as required. So the claim is proved.

We introduce some further definitions: For $c'\in S_1$ let $c'/S_2:=\{d':(c',d')\in S\}$ and $c'/S_{E2}:=\{d':(c',d')\in S_E\}$, and for $d'\in S_2$ let $d'/S_1:=\{c':(c',d')\in S\}$ and $d'/S_{E1}:=\{c':(c',d')\in S_E\}$. The sizes of the $c'/S_{E2}$ and the $d'/S_{E1}$ are in fact independent of $c'$ and $d'$, as we now show.

First consider some arbitrary $c'\in S_{E1}$. Let $D_{\mybar{a}c}$ be the set of realisations of $\tp(d/\mybar{a}c)$. Then, by the definition of $S$, $D_{\mybar{a}c}=c/S_2$. Let $d'\in c'/S_{E2}$. Then, since $\tp((c',d')/\mybar{a})=\tp((c,d)/\mybar{a})$, there exists $\sigma\in\Aut(\M/\mybar{a})$ such that $\sigma(c',d')=(c,d)$. We claim that $\sigma\colon c'/S_2\to c/S_2$ is a bijection. Injectivity is immediate. It is well-defined, since if $d''\in c'/S_2$, then $\sigma(c',d'')=(c,\sigma(d''))\in S$ and so $\sigma(d')\in c/S_2$. It is surjective, since if $d''\in c/S_2$, then $\sigma^{-1}(c,d'')=(c',\sigma^{-1}(d''))\in S$ and so $\sigma^{-1}(d'')\in c'/S_2$. So the claim is proved. Now, as mentioned previously, envelopes are homogeneous substructures. So, since $d,d'\in E$, we may assume that $\sigma(E)=E$. Thus
\begin{equation}\label{sizecse2}
\begin{aligned}
|c'/S_{E2}|&=|c'/S_2\cap E|\\
&=|c/S_2\cap E|\\
&=|D_{\mybar{a}c}\cap E|
\end{aligned}
\end{equation}
for all $c'\in S_{E1}$.

\hypertarget{uniquec}Now consider some arbitrary $d'\in S_{E2}$. Since $c\leq d$, $c\in\dcl(d)$. Thus, since $\tp(d'/\mybar{a})=\tp(d/\mybar{a})$, there exists a unique $c'\in M$ such that $(c',d')\in S$. But $d'\in E$ and so $(c',d')\in S_E$. Hence
\begin{equation}\label{sizedse1}
|d'/S_{E1}|=1
\end{equation}
for all $d'\in S_{E2}$.

We are now in a position to calculate the size of $S_E$ and thereby also that of $D_{\mybar{a}}\cap E$. Let's first calculate $|S_E|$ in terms of $|S_{E1}|$:
\begin{equation}\label{sizese1}
\begin{aligned}
|S_E|&=\sum_{c'\in S_{E1}}|c'/S_{E2}| \\
&= |S_{E1}|\cdot |D_{\mybar{a}c}\cap E| \quad \text{(by \eqref{sizecse2})}.
\end{aligned}
\end{equation}
And now in terms of $|S_{E2}|$:
\begin{equation}\label{sizese2}
\begin{aligned}
|S_E|&=\sum_{d'\in S_{E2}}|d'/S_{E1}|\\
&= |S_{E2}| \quad \text{(by \eqref{sizedse1})}.
\end{aligned}
\end{equation}
So, since $S_{E2}=D_{\mybar{a}}\cap E$, \eqref{sizese1} and \eqref{sizese2} yield
\begin{equation}\label{sizede}
|D_{\mybar{a}}\cap E|=|S_{E1}|\cdot |D_{\mybar{a}c}\cap E|.
\end{equation}

\hypertarget{S_1poly}First consider $S_{E1}$. We previously proved that $S_{E1}=S_1\cap E$. We also showed that $S_1$ is the set of realisations of $\tp(c/\mybar{a})$ and that $S_1$ is a subset of $J$. By the Ryll-Nardzewski Theorem, $\tp(c/\mybar{a})$ is isolated and so $S_1$ is $\mybar{a}$-definable. So $S_1$ is an $\mybar{a}$-definable subset of a projective geometry. Thus, \hyperlink{S_1polyproof}{as we will show later} (after Case 2), there exists a polynomial $\rho_1\in\Q[\mathbf{X}_1,\ldots,\mathbf{X}_s]$ such that $\rho_1(\mybar{d^*}(E))=|S_1\cap E|$.

Now consider $D_{\mybar{a}c}$, which is a parameter-definable subset of $M$, again by the Ryll-Nardzewski Theorem. We have $\rk(D_{\mybar{a}c})<\rk(D_{\mybar{a}})$, as proved in the following paragraph, and thus by the induction hypothesis there exists a polynomial $\rho_2\in\Q[\mathbf{X}_1,\ldots,\mathbf{X}_s]$ such that $|D_{\mybar{a}c}\cap E|=\rho_2(\mybar{d^*}(E))$.

We prove the claim that $\rk(D_{\mybar{a}c})<\rk(D_{\mybar{a}})$. Let $n:=\rk(D_{\mybar{a}c})$. We previously showed that $D_{\mybar{a}c}=c/S_2$. We also showed that for every $c'\in S_1$ there exists $\sigma\in\Aut(\M/\mybar{a})$ such that $\sigma(c/S_2)=c'/S_2$, which thus means $\rk(c'/S_2)=n$ for every $c'\in S_1$. Define $f\colon D_{\mybar{a}}\to S_1$ by $f(d'):=c'$, where $c'$ is such that $(c',d')\in S$. As we \hyperlink{uniquec}{showed earlier}, for every $d\in S_2$ there is precisely one $c'$ such that $(c',d')\in S$, so $f$ is well-defined. Then, since $f^{-1}(c')=c'/S_2$, we have $\rk(f^{-1}(c'))=n$ for every $c'\in S_1$. Also note that $\rk(S_1)>0$, since $S_1$ is infinite (because $c$ is not algebraic over its immediate predecessor). Thus, taking $D:=D_1:=D_{\mybar{a}}$, $\pi:=\Id$, $D_2:=S_1$ and $f:= f$ in \Cref{rank}, we see that $\rk(D_{\mybar{a}})\geq n+1>\rk(D_{\mybar{a}c})$. So the claim is proved.

Define $\rho:=\rho_1\cdot\rho_2$. Then \eqref{sizede} gives us the desired result:
\[
\begin{aligned}
|D_{\mybar{a}}\cap E|&=|S_{E1}|\cdot |D_{\mybar{a}c}\cap E|\\
&=\rho_1(\mybar{d^*}(E))\cdot\rho_2(\mybar{d^*}(E))\\
&=\rho(\mybar{d^*}(E)).
\end{aligned}
\]
\emph{End of Case 1.}

\emph{Case 2:\ The element $b$ is not the root.} Since $c$ is minimal, $b$ and each element below $b$ (except the root) is algebraic over its immediate predecessor. Thus, by the same \hyperlink{mini-induction}{induction used earlier in the proof}, $b\in\acl(\0)$. Thus, by inspection of \Cref{Lie-isation-defn}, we see that we may add to $\La$ a constant symbol for $b$ without affecting the Lie coordinatising tree. Adding the new constant symbol preserves the inequality $\rk(D_{\mybar{a}c})<\rk(D_{\mybar{a}})$, again since $b\in\acl(\0)$, but it makes $J$ $\0$-definable. We may thus simply repeat the argument given in Case 1 in the extended language $\La_b$. \emph{End of Case 2.}

\hypertarget{S_1polyproof}We now prove our \hyperlink{S_1poly}{earlier claim} that there is a polynomial $\rho_1\in\Q[\mathbf{X}_1,\ldots,\mathbf{X}_s]$ such that $\rho_1(\mybar{d^*}(E))=|S_1\cap E|$. The set $S_1$ is an $\La_{\mybar{a}}$-definable subset of $J$ and thus, since $J$ is fully embedded in $\M$, $S_1$ is $\mybar{a}$-definable in the language of $J$; we may assume that $\mybar{a}$ lies in $J$ by stable embeddedness. We now consider the localisation $J/\mybar{a}$ of $J$ at $\mybar{a}$ (\Cref{localisation}). $J$ fibres over $J/\mybar{a}$, where two elements lie in the same fibre if and only if they have the same algebraic closure over $\mybar{a}$. These fibres all have the same finite size, where this size is determined by $\tp(\mybar{a})$. Now, $S_1$ might not respect these fibres; that is, the intersection of $S_1$ with each fibre might vary in size. However, since the fibres are finite, there are only finitely many possible sizes for these intersections and so we can $\mybar{a}$-definably partition the set of fibres according to size. We then consider the intersection of each part of the partition with $E$: We calculate the size of the base of the fibres, which is a $\0$-definable subset of $J/\mybar{a}$, and then multiply this result by the size of the fibre. We then sum these results to obtain $|S_1\cap E|$. So, in short, by localising $J$ at $\mybar{a}$, it suffices to consider $\0$-definable subsets of projective Lie geometries. It remains to do the explicit calculations in each kind of projective Lie geometry. We use quantifier elimination (\Cref{geoqe}). 

\emph{A projectivisation of a degenerate space.} Projectivisation in this case is trivial. The only $\0$-definable set is the whole space itself. (We can rule out $\0$ because $D_{\mybar{a}}\cap E\neq\0$.) So $S_1=J$. Thus, since $J\cap E=\mu(J)$ (\Cref{envelopedefn}), where $\mu$ is the dimension function of $E$, we have $|S_1\cap E|=d_E(J)$, as required.

\emph{A projectivisation of a pure vector space.} The only $\0$-definable set is again the whole space itself. So $S_1=J$. Thus, going via the approximation of the linear space, which has dimension $\dim\mu(J) +1$, we have
\[
|S_1\cap E|=\frac{q^{\dim\mu(J)+1}-1}{q-1}=q^{\dim\mu(J)}+1=(-\sqrt{q})^{2\dim\mu(J)}+1=d_E(J)^2+1,
\]
as required.

\emph{A projectivisation of a polar space.} This is the same as the vector space case, except that we can define either half of the space or the whole space. If the former, then the answer is the same as that in the vector space case. If the latter, then we multiply this answer by $2$.

\emph{A projectivisation of a symplectic space.} Since there is only one $1$-type, this case is the same as the pure vector space case.

\emph{\hypertarget{unitarycalcs}A projectivisation of a unitary space.} The calculations can be found in the proof of Proposition 5.2.2 in \cite{cherhrush}. Note that it is this case that forces us to consider $(-\sqrt{q})^{\dim\mu(J)}$, rather than just $q^{\dim\mu(J)}$.

\emph{An projectivisation of an orthogonal space.} The calculations can again be found in the proof of Proposition 5.2.2 in \cite{cherhrush}. Note that this is where the assumption regarding constant signature and parity is used (\Cref{sigparassump}). Also note that there is a small typographical error in the calculations: On p.~91 of \cite{cherhrush} it should state $n(2i+j,\alpha)=q^i n(j,\alpha)+q^{j-1}(q^{2i}-q^i)$, the original term $q^i n(i,\alpha)$ being incorrect.

One final note: The calculations for unitary and orthogonal spaces in \cite{cherhrush} are actually done in the linear Lie geometry, rather than in the projectivisation. However, by a similar fibering argument to the one \hyperlink{S_1polyproof}{used earlier} with the localisation, this is sufficient.
\end{proof}


\subsection{Macpherson's conjecture, full version}
\label{sectionmacphersonconjfull}

We introduce the notion of a polynomial exact class, enabling us to state and prove \Cref{dugaldsconjecturefull}, the main result of the present work.

\begin{defn}[Polynomial exact class]\label{polymec}
Let $\La$ be a language and $\C$ a class of finite $\La$-structures. Then $\C$ is a \emph{polynomial exact class} in $\La$ if there exist
\begin{enumerate}[(i)]
\item $R\subseteq\Q[\mathbf{X}_1,\ldots,\mathbf{X}_k]$ for some $k\in\Npos$,
\item $\La$-formulas $\delta_1(\mybar{x}_1,\mybar{y}_1),\ldots,\delta_k(\mybar{x}_k,\mybar{y}_k)$ and
\item $\mybar{a}_1\in M^{l(\mybar{y}_1)},\ldots,\mybar{a}_k\in M^{l(\mybar{y}_k)}$ for each $\M\in\C$
\end{enumerate}
such that $\C$ is an $R$-mec in $\La$ where
\[
h(\M)=h\left(|\delta_1(\M^{l(\mybar{x}_1)},\mybar{a}_1)|,\ldots,|\delta_k(\M^{l(\mybar{x}_1)},\mybar{a}_k)|\right)
\]
for every $h\in R$ and for every $\M\in\C$.
\end{defn}

\begin{rem}
If we replace `$R$-mec' with `$R$-mac' in \Cref{polymec}, then we define a \emph{polynomial asymptotic class}. In this case we allow polynomials with irrational coefficients.

Note that any $1$-dimensional asymptotic class is a polynomial asymptotic class, since we may take $\delta$ to be the $\La$-formula $x=x$ and $h$ to be the polynomial $\mu\mathbf{X}^d$, where $(d,\mu)$ is the dimension--measure pair.
\end{rem}

\begin{exmpl}[Theorem 4.3.2 in \cite{gms}]
The class of finite vector spaces is a polynomial asymptotic class.
\end{exmpl}

\begin{thm}[Macpherson's conjecture, full version]\label{dugaldsconjecturefull}
For any countable\linebreak language $\La$ and for any $d\in\Na^+$ the class $\C(\La,d)$ of all finite $\La$-structures with at most $d$ 4-types is a polynomial exact class in $\La$.
\end{thm}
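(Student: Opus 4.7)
The plan is to combine \Cref{thmsix}, \Cref{sapproxmecshort}, and \Cref{5.2.2} with further definable refinements of the partition produced by \Cref{dugaldsconjectureshort}. First I would reduce to the case of a finite language $\La$ via \Cref{reductmec}: a uniform bound on the number of variables needed for the witnessing polynomials will follow from the uniform bound on the number of $4$-types, which controls the combinatorial complexity of the coordinatising data in each $\mathcal{F}_i^*$. For finite $\La$, \Cref{thmsix} partitions $\C(\La,d)$ definably into subclasses $\mathcal{F}_1,\ldots,\mathcal{F}_k$, each smoothly approximating a Lie coordinatisable $\La$-structure $\mathcal{F}_i^*$; by \Cref{smoothLieequiv}, the members of $\mathcal{F}_i$ are (isomorphic to) envelopes of $\mathcal{F}_i^*$.

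Next I would further refine each $\mathcal{F}_i$ so that \Cref{5.2.2} applies uniformly. Fix a Lie coordinatisation of $\mathcal{F}_i^*$ and enumerate its SSGs up to equivalence as $J_1,\ldots,J_s$. Each envelope $E$ carries a dimension function $\mu$ whose domain is a subset of $\{J_1,\ldots,J_s\}$, and for orthogonal $J_t$ the parity and signature of $\mu(J_t)$ take values in finite sets. Since there are only finitely many combinatorial ``types'' for $(E,\mu)$, one obtains a refined definable partition $\mathcal{F}_i = \mathcal{F}_{i,1}\cup\cdots\cup\mathcal{F}_{i,r}$ on each piece of which \Cref{5.2.2} applies (the family of envelopes of fixed type in $\mathcal{F}_i^*$ is totally ordered by inclusion of canonically embedded copies). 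Together with the Projection Lemma (\Cref{projlemmec}) and the cell partition supplied by \Cref{5.2.2}, this associates to each $\La$-formula $\vphi(\mybar{x},\mybar{y})$ finitely many measuring functions of the form $\M\mapsto\rho(\mybar{d^*}(E))$ for some $\rho\in\Q[\mathbf{X}_1,\ldots,\mathbf{X}_s]$.

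The final step is to realise each $d^*_E(J_t)$ as a polynomial function of cardinalities of $\La$-definable sets in $\M$. For each $J_t$ I would take $\delta_t(x,\mybar{y}_t)$ to be an $\La$-formula defining the projective Lie geometry $J_t(\mybar{y}_t)$, and choose parameters $\mybar{a}_t\in\dom(J_t)\cap M$ for each $\M\in\mathcal{F}_{i,j}$. The explicit finite-field computations at the end of the proof of \Cref{5.2.2} show that $|\delta_t(\M,\mybar{a}_t)|$ is a polynomial in $d^*_E(J_t)$ depending typically only on $(d^*_E(J_t))^2 = q^{d_E(J_t)}$, so that $(d^*_E(J_t))^2$ can be recovered as a polynomial in $|\delta_t(\M,\mybar{a}_t)|$ by inversion of a monic linear relation. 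Substituting back into $\rho$ then expresses each measuring function as an element of $\Q[\mathbf{X}_1,\ldots,\mathbf{X}_s]$ evaluated at the cardinalities $|\delta_t(\M,\mybar{a}_t)|$. The main obstacle will be the unitary case, where $\rho$ can contain odd powers of $d^*_E(J_t) = (-\sqrt{q})^{d_E(J_t)}$ that are not expressible as polynomials in $(d^*_E(J_t))^2$; I would handle this by a further definable refinement according to $d_E(J_t)\bmod 2$ (which fixes the sign of the odd powers), together with the introduction of auxiliary formulas $\delta_t'$ whose cardinalities encode the remaining odd-power information via sizes of natural subobjects (e.g.\ unitary quadrics) of $J_t(\mybar{a}_t)\cap E$. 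This bookkeeping is local to the calculations already present in the proof of \Cref{5.2.2} and yields the required polynomial set $R\subseteq\Q[\mathbf{X}_1,\ldots,\mathbf{X}_k]$ together with the witnessing formulas $\delta_1,\ldots,\delta_k$.
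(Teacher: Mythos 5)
Your proposal takes essentially the same route as the paper: reduce to finite $\La$, apply \Cref{thmsix} to partition $\C(\La,d)$ into pieces $\mathcal{F}_i$ whose members are envelopes of a Lie coordinatisable $\mathcal{F}_i^*$, refine by parity/signature so that \Cref{5.2.2} applies on each piece, and recover the entries of $\mybar{d^*}(E)$ from cardinalities of sets cut out by formulas $\delta_t$ defining the coordinatising geometries. The one genuine point of difference is your final paragraph: you make explicit the need to \emph{compose} --- to rewrite the polynomial $\rho$ of \Cref{5.2.2}, a polynomial in $\mybar{d^*}(E)$, as a polynomial in the cardinalities $|\delta_t(\M,\mybar{a}_t)|$ --- and you correctly identify the unitary case as the place where this is delicate, proposing further parity refinements and auxiliary formulas. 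The paper handles this step in a single clause (``since each coordinatising Lie geometry is fully embedded in and thus (by definition) also definable in $\mathcal{F}_i^*$''), so your version is more transparent about what still has to be verified; the proposed fix is plausible, though it would need to be written out carefully against the actual unitary point-counts in the proof of Proposition 5.2.2 of \cite{cherhrush} before one could call it complete.
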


\begin{proof}
By \Cref{dugaldsconjectureshort} we know that $\C:=\C(\La,d)$ is a multidimensional exact class. It remains to show that the measuring functions are polynomial in the sense of \Cref{polymec}.

Recall our use of \Cref{thmsix} in the proof of \Cref{dugaldsconjectureshort}: We partitioned $\C$ into subclasses $\mathcal{F}_1,\ldots,\mathcal{F}_k$ such that the $\La$-structures in each $\mathcal{F}_i$ smoothly approximate an $\La$-structure $\mathcal{F}_i^*$. By the work in \cite{cherhrush} each $\mathcal{F}_i$ is a class of envelopes for $\mathcal{F}_i^*$, which is Lie coordinatisable. So \Cref{5.2.2} implies that $\C$ is a polynomial exact class, since each coordinatising Lie geometry is fully embedded in and thus (by definition) also definable in $\mathcal{F}_i^*$.

We address some details arising from this proof. Firstly, by the \nameref{projlemmec} (\Cref{projlemmec}) it suffices to consider $\La$-formulas in one object variable, as we do in \Cref{5.2.2}. Secondly, by \Cref{eventuallyiff} the intersection $\vphi(\mathcal{F}_i^*,\mybar{a})\cap \M$ is equal to the relativisation $\vphi(\M,\mybar{a})$ for all $\M\in\mathcal{F}_i$ above some minimum size, so by \Cref{finiteexceptions} it suffices to consider the intersection. Thirdly, since $\C$ is an exact class, rather than just an asymptotic class, the measuring functions are determined by the formula and thus it is not necessary to show that the polynomials given by \Cref{5.2.2} are uniform in the parameter $\mybar{a}$; this point is important because the measuring functions cannot depend on the parameters. Lastly, the hypothesis of constant parity and signature in the statement of \Cref{5.2.2} can be satisfied by partitioning each $\mathcal{F}_i$ into (up to) four subclasses, each with constant parity and signature.
\end{proof}

\begin{rem}~
\begin{thmlist}
\item \Cref{dugaldsconjecturefull} generalises Theorem 3.8 in \cite{macstein1} and Proposition 4.1 in \cite{elwes}.
\item \Cref{dugaldsconjecturefull} allows us to improve \Cref{qeexample}: In the proof of \Cref{qeexample}, set $m=4$. Then the proof shows that each structure in $\C$ has at most $d$ 4-types. So $\C$ is a subclass of $\C(\La,d)$ and thus by \Cref{dugaldsconjecturefull} is a polynomial exact class in $\La$.
\end{thmlist}
\end{rem}


\section{Open questions}
\label{section:questions}
\counterwithin{defn}{section}
\counterwithin{fact}{section}
\counterwithin{question}{section}

We pose a number of questions arising from the present work. In doing so we refer to the important model-theoretic notions of stability and (super)simplicity, which we have so far only mentioned in passing. We do not define these notions, but instead direct the reader to the vast literature on them, \cite{casanovas}, \cite{kim} and \cite{tentziegler} being good introductions. We also consider the notion of homogeneity, which is easier to define:

\begin{defn} An $\La$-structure $\M$ is \emph{homogeneous} if $\M$ is countable and every isomorphism between substructures of $\M$ extends to an automorphism of $\M$.
\end{defn}

Note that the word `homogeneous' is overused in mathematics, especially in model theory. What we call `homogeneous' might be called `ultrahomogeneous' by other authors. See the comment after Definition 2.1.1 in \cite{machomogeneous}.

\begin{fact}\label{homogeneosufact}
Let $\La$ be a finite relational language.
\begin{thmlist}
\item If $\M$ is a homogeneous $\La$-structure, then $\M$ is $\aleph_0$-categorical.
\item If $\M$ is an $\aleph_0$-categorical $\La$-structure, then $\Th(\M)$ has quantifier elimination if and only if $\M$ is homogeneous.
\item If $\M$ is a stable homogeneous $\La$-structure, then $\M$ is $\aleph_0$-stable.
\end{thmlist}
\end{fact}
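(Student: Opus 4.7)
The plan is to take the three parts in order, with the first two being essentially bookkeeping and the third being the only substantive obstacle.

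For part (i), I would verify via the Ryll-Nardzewski Theorem that $\Aut(\M)$ acts oligomorphically on $\M$. Since $\La$ is finite and relational, for each $n\in\Npos$ there are only finitely many isomorphism classes of $n$-element $\La$-structures. Because $\La$ is relational, the substructure generated by a tuple $\bar{a}\in M^n$ is just the induced structure on $\{a_1,\ldots,a_n\}$, so two tuples $\bar{a},\bar{b}\in M^n$ have the same quantifier-free type precisely when the map $a_i\mapsto b_i$ is an isomorphism between the substructures they generate. Homogeneity then extends any such isomorphism to an automorphism of $\M$, so the $\Aut(\M)$-orbits on $M^n$ are in bijection with the (finitely many) isomorphism classes of $n$-element substructures.

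For part (ii), both directions exploit that an $\aleph_0$-categorical countable structure is $\omega$-saturated, so that two tuples realise the same complete type if and only if they lie in the same $\Aut(\M)$-orbit. For the forward direction, assume $\Th(\M)$ has quantifier elimination and let $f\colon A\to B$ be an isomorphism between finite substructures; the enumerating tuples $\bar{a},\bar{b}$ have the same quantifier-free type, hence the same complete type by QE, so $\omega$-saturation yields an extending automorphism via a standard back-and-forth. For the reverse direction, homogeneity makes automorphism orbits coincide with quantifier-free types, so each complete type over $\0$ is isolated by a quantifier-free formula; combined with $\aleph_0$-categoricity (finitely many types in each fixed number of free variables), this forces every formula to be equivalent modulo $\Th(\M)$ to a quantifier-free one.

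Part (iii) is the only real theorem here (classically due to Lachlan) and is where the work actually lies. I would first apply part (i) to reduce to showing that an $\aleph_0$-categorical stable homogeneous $\La$-structure in a finite relational language has finite Morley rank. The strategy is to convert forking independence into combinatorial data about the amalgamation class of finite substructures of $\M$: stability plus homogeneity should force the amalgamation class to admit a uniform bound on the number of inequivalent one-point extensions of a given finite configuration, which translates into a uniform finite bound on the Cantor--Bendixson rank of the $1$-types over finite sets. The main obstacle is that types over infinite parameter sets are not controlled by quantifier-free data in the way they are over finite sets, and one must use stability, through non-forking extensions of types over finite sets, to propagate the finite-set analysis to the full type space. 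This is the technical heart of the argument and is emphatically \emph{not} a Ryll-Nardzewski counting exercise; it relies on Lachlan's structural analysis of stable amalgamation classes, and any honest write-up would need to cite or reproduce that machinery.
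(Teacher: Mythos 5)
The paper does not actually prove this statement; it is labelled a \emph{Fact} precisely because it is imported as standard background (essentially from Cherlin--Harrington--Lachlan \cite{chl} and the surrounding literature on homogeneous structures), and the only place it is used is to motivate \Cref{bipartitequestion}. So there is no paper proof to compare against; the question is whether your sketch is sound on its own terms.

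Parts (i) and (ii) are fine and are indeed the routine Ryll-Nardzewski/QE bookkeeping you describe. The trouble is in your sketch of part (iii). You write that ``stability plus homogeneity should force the amalgamation class to admit a uniform bound on the number of inequivalent one-point extensions of a given finite configuration.'' But this uniform bound is automatic for \emph{any} homogeneous structure in a finite relational language of bounded arity $r$, stable or not: a new point's quantifier-free relationship to an $n$-element set is determined by its relations with $(r{-}1)$-element subsets, and there are only finitely many such patterns. The generic bipartite graph or the random graph have this bound too, and they are as far from $\aleph_0$-stable as one can get. So the datum you are pointing at cannot be where stability enters, and the reduction you propose---``finite bound on one-point extensions $\Rightarrow$ finite Cantor--Bendixson rank''---is not a valid inference. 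The actual content of Lachlan's theorem lies in the interaction between stability (no definable linear order, NOP) and the combinatorics of the amalgamation class, and it needs the machinery of coordinatisation by strictly minimal sets / rank analysis from \cite{chl}; it is not recoverable from extension-counting alone. Your closing caveat that an honest write-up would cite or reproduce Lachlan is the right instinct, but as stated the intermediate ``strategy'' would, if taken literally, prove something false, so it should be deleted rather than filled in.
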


\begin{question}\label{bipartitequestion}
By \Cref{homogeneosufact} and Corollary 7.4 in \cite{chl}, if $\La$ is a finite relational language and $\M$ is a stable homogeneous $\La$-structure, then $\M$ is smoothly approximable and thus by \Cref{sapproxmecshort} is elementarily equivalent to an ultraproduct of a multidimensional exact class. Does the converse hold? That is, if $\La$ is a finite relational language and $\M$ is a homogeneous $\La$-structure that is elementarily equivalent to an ultraproduct of a multidimensional exact class, then is $\M$ necessarily stable?

Recalling \Cref{paleyrem}, answering this question might shed some light on the role in Theorem 7.5.6 in \cite{cherhrush} of the generic bipartite graph, which is neither stable nor smoothly approximable.
\end{question}

The following two questions were suggested to the present author by Ivan Toma\v{s}i\'{c}:

\begin{question}
What is the relationship between the work of Kraj\'{i}\v{c}ek, Scanlon and others on Euler characteristics and $R$-macs and $R$-mecs? \cite{ks}, \cite{scanlon}, \cite{kraj}, \cite{rytentom}, \cite{ivan}

The notion of a generalised measurable structure, as developed in \cite{amsw}, also appears to be related, but a thorough investigation has yet to be carried out.
\end{question}

\begin{question}
What are the interactions between polynomial exact classes and varieties with a polynomial number of points over finite fields?

The work of Brion and Peyre in \cite{bp} would be a good starting point for research into this question, as it suggests that algebraic varieties homogeneous under a linear algebraic group may provide a generic example of a polynomial exact class.
\end{question}

\subsection*{Acknowledgements}
The concepts explored in this paper were introduced to me by Dugald Macpherson, my patient and supportive PhD supervisor, who provided me with invaluable guidance and scrutiny throughout its composition. I had many pleasant and productive discussions with my friend and collaborator Sylvy Anscombe, while Gregory Cherlin, Immanuel Halupczok, Ehud Hrushovski and Ivan Toma\v{s}i\'{c} provided further useful feedback and insight. Several people on the \TeX{} Stack Exchange helped me overcome some \LaTeX{} issues that arose whilst writing this paper, while my wise and wonderful wife Mandy helped me overcome some non-\LaTeX{} issues that arose whilst writing this paper. I am indebted to the anonymous referee for their careful and thorough report; the present work is much improved for their suggestions.

\bibliography{wolf_references}
\bibliographystyle{ref_style_wolf}

\end{document}